\newtheorem{theorem}{Theorem}[section] 
\newtheorem{lemma}[theorem]{Lemma}
\newtheorem{proposition}[theorem]{Proposition}
\newtheorem{corollary}[theorem]{Corollary}
\theoremstyle{definition}
\newtheorem{definition}[theorem]{Definition}
\newtheorem{example}[theorem]{Example}
\newtheorem{remark}[theorem]{Remark}
\renewcommand{\H}{\mathcal{H}}
\DeclareMathOperator{\TL}{TL}
\DeclareMathOperator{\PFTL}{\widehat{TL}}
\newcommand{\Z}{\mathbb{Z}}
\newcommand{\N}{\mathbb{N}}
\newcommand{\A}{\mathcal{A}}
\newcommand{\C}{\widetilde{C}}
\newcommand{\x}{\mathsf{x}}
\renewcommand{\a}{\mathbf{a}}
\DeclareMathOperator{\DTL}{\mathbb{D}TL}
\renewcommand{\P}{\mathcal{P}}
\DeclareMathOperator{\LFD}{\mathbb{D}\widehat{TL}}
\renewcommand{\b}{\hat{b}}
\renewcommand{\d}{\hat{d}}
\newcommand{\bcirc}{{\color{cyan}\bullet}}
\newcommand{\supp}{\mathrm{supp}}
\renewcommand{\L}{\mathcal{L}}
\newcommand{\R}{\mathcal{R}}
\renewcommand{\(}{\left(}
\renewcommand{\)}{\right)}
\newcommand{\w}{\overline{w}}
\renewcommand{\H}{\mathcal{H}}
\renewcommand{\A}{\mathcal{A}}
\DeclareMathOperator{\FC}{FC}
\newcommand{\WA}{W(A_{n})}
\newcommand{\WD}{W(D_{n})}
\renewcommand{\qed}{\hfill \mbox{$\Box$}}
\definecolor{naugreen}{cmyk}{.43,0,.34,.38}
\definecolor{naublue}{cmyk}{1,.72,0,.32}
\definecolor{lightskyblue}{cmyk}{.4,.11,0,.2}
\definecolor{softplum}{cmyk}{.02,.45,0,.68}
\definecolor{lightorange}{cmyk}{0,.45,.59,.02}
\definecolor{brickorange}{cmyk}{0,.59,.59,.20}
\definecolor{deeppink}{cmyk}{0,.6,.3,.3}
\definecolor{orchid}{cmyk}{0,.49,.02,.05}
\newcommand\drawface{\draw[fill=cyan] (-1,-1) rectangle (1,1)}
\newcommand\colordrawface{\draw[fill=magenta] (-1,-1) rectangle (1,1)}
\newcommand\drawfaceblank{\draw[fill=white, fill opacity=0.1, densely dashed] (-1,-1) rectangle (1,1)}
\newcommand\colorfaceone{
        \begin{scope}[canvas is yz plane at x=-1]
            \colordrawface;
        \end{scope}
}
\newcommand\colorfacetwo{
        \begin{scope}[canvas is yx plane at z=-1]
           \colordrawface;
        \end{scope} 
}
\newcommand\colorfacethree{
        \begin{scope}[canvas is zx plane at y=-1]
           \colordrawface;
        \end{scope}
}
\newcommand\colorrightonebar{
        \begin{scope}[canvas is zx plane at y=1]
           \colordrawface;
           \node[rotate=-90] {$\overline{1}$};
        \end{scope}
}   
\newcommand\colortoponebar{
        \begin{scope}[canvas is yx plane at z=1]
          \colordrawface;
          \node[yscale=-1] {$\overline{1}$};
      \end{scope}
}
\newcommand\colorfrontonebar{
      \begin{scope}[canvas is yz plane at x=1]
        \colordrawface;
        \node  {$\overline{1}$};
      \end{scope}
}
\newcommand\colorrightone{
        \begin{scope}[canvas is zx plane at y=1]
           \colordrawface;
           \node[rotate=-90] {$1$};
        \end{scope}
}    
\newcommand\colortopone{  
        \begin{scope}[canvas is yx plane at z=1]
          \colordrawface;
          \node[yscale=-1] {$1$};
      \end{scope}
}
\newcommand\colorfrontone{
      \begin{scope}[canvas is yz plane at x=1]
        \colordrawface;
        \node  {$1$};
      \end{scope}
}
\newcommand\faceone{
        \begin{scope}[canvas is yz plane at x=-1]
            \drawface;
        \end{scope}
}
\newcommand\facetwo{
        \begin{scope}[canvas is yx plane at z=-1]
           \drawface;
        \end{scope} 
}
\newcommand\facethree{
        \begin{scope}[canvas is zx plane at y=-1]
           \drawface;
        \end{scope}
}
\newcommand\rightonebar{
        \begin{scope}[canvas is zx plane at y=1]
           \drawface;
           \node[rotate=-90] {$\overline{1}$};
        \end{scope}
}    
\newcommand\toponebar{
        \begin{scope}[canvas is yx plane at z=1]
          \drawface;
          \node[yscale=-1] {$\overline{1}$};
      \end{scope}
}
\newcommand\frontonebar{
      \begin{scope}[canvas is yz plane at x=1]
        \drawface;
        \node  {$\overline{1}$};
      \end{scope}
}
\newcommand\rightone{
        \begin{scope}[canvas is zx plane at y=1]
           \drawface;
           \node[rotate=-90] {$1$};
        \end{scope}
}    
\newcommand\topone{
        \begin{scope}[canvas is yx plane at z=1]
          \drawface;
          \node[yscale=-1] {$1$};
      \end{scope}
}
\newcommand\frontone{
      \begin{scope}[canvas is yz plane at x=1]
        \drawface;
        \node  {$1$};
      \end{scope}
}
\newcommand\righttwo{
        \begin{scope}[canvas is zx plane at y=1]
           \drawface;
           \node[rotate=-90] {$2$};
        \end{scope}
}
\newcommand\toptwo{
        \begin{scope}[canvas is yx plane at z=1]
          \drawface;
          \node[yscale=-1] {$2$};
      \end{scope}
}
\newcommand\fronttwo{
      \begin{scope}[canvas is yz plane at x=1]
        \drawface;
        \node  {$2$};
      \end{scope}
}
\newcommand\rightthree{
        \begin{scope}[canvas is zx plane at y=1]
           \drawface;
           \node[rotate=-90] {$3$};
        \end{scope}
}    
\newcommand\topthree{
        \begin{scope}[canvas is yx plane at z=1]
          \drawface;
          \node[yscale=-1] {$3$};
      \end{scope}
}
\newcommand\frontthree{
      \begin{scope}[canvas is yz plane at x=1]
        \drawface;
        \node  {$3$};
      \end{scope}
}
\newcommand\rightfour{
        \begin{scope}[canvas is zx plane at y=1]
           \drawface;
           \node[rotate=-90] {$4$};
        \end{scope}
}    
\newcommand\topfour{
        \begin{scope}[canvas is yx plane at z=1]
          \drawface;
          \node[yscale=-1] {$4$};
      \end{scope}
}
\newcommand\frontfour{
      \begin{scope}[canvas is yz plane at x=1]
        \drawface;
        \node  {$4$};
      \end{scope}
}
\newcommand\rightfive{
        \begin{scope}[canvas is zx plane at y=1]
           \drawface;
           \node[rotate=-90] {$5$};
        \end{scope}
}    
\newcommand\topfive{
        \begin{scope}[canvas is yx plane at z=1]
          \drawface;
          \node[yscale=-1] {$5$};
      \end{scope}
}
\newcommand\frontfive{
      \begin{scope}[canvas is yz plane at x=1]
        \drawface;
        \node  {$5$};
      \end{scope}
}
\newcommand\righti{
        \begin{scope}[canvas is zx plane at y=1]
           \drawface;
           \node[rotate=-90] {$i$};
        \end{scope}
}    
\newcommand\topi{
        \begin{scope}[canvas is yx plane at z=1]
          \drawface;
          \node[yscale=-1] {$i$};
      \end{scope}
}
\newcommand\fronti{
      \begin{scope}[canvas is yz plane at x=1]
        \drawface;
        \node  {$i$};
      \end{scope}
}
\newcommand\rightiplus{
        \begin{scope}[canvas is zx plane at y=1]
           \drawface;
           \node[rotate=-90] {$i+1$};
        \end{scope}
}  
\newcommand\topiplus{
        \begin{scope}[canvas is yx plane at z=1]
          \drawface;
          \node[yscale=-1] {$i+1$};
      \end{scope}
}
\newcommand\frontiplus{
      \begin{scope}[canvas is yz plane at x=1]
        \drawface;
        \node  {$i+1$};
      \end{scope}
}
\newcommand\faceoneblank{
        \begin{scope}[canvas is yz plane at x=-1]
            \drawfaceblank;
        \end{scope}
}
\newcommand\facetwoblank{
        \begin{scope}[canvas is yx plane at z=-1]
           \drawfaceblank;
        \end{scope} 
}
\newcommand\facethreeblank{
        \begin{scope}[canvas is zx plane at y=-1]
           \drawfaceblank;
        \end{scope}
}
\newcommand\rightblank{
        \begin{scope}[canvas is zx plane at y=1]
           \drawfaceblank;
           \node[rotate=-90] {};
        \end{scope}
}    
\newcommand\topblank{
        \begin{scope}[canvas is yx plane at z=1]
          \drawfaceblank;
          \node[yscale=-1] {};
      \end{scope}
}
\newcommand\frontblank{
      \begin{scope}[canvas is yz plane at x=1]
        \drawfaceblank;
        \node  {};
      \end{scope}
}
\newcommand\onebarcube[2]{
\begin{scope}[xshift=#1, yshift=#2]
\faceone \facetwo \facethree \rightonebar \toponebar \frontonebar
\end{scope}
}
\newcommand\onecube[2]{
\begin{scope}[xshift=#1, yshift=#2]
\faceone \facetwo \facethree \rightone \topone \frontone
\end{scope}
}
\newcommand\coloronebarcube[2]{
\begin{scope}[xshift=#1, yshift=#2]
\colorfaceone \colorfacetwo \colorfacethree \colorrightonebar \colortoponebar \colorfrontonebar
\end{scope}
}
\newcommand\coloronecube[2]{
\begin{scope}[xshift=#1, yshift=#2]
\colorfaceone \colorfacetwo \colorfacethree \colorrightone \colortopone \colorfrontone
\end{scope}
}
\newcommand\twocube[2]{
\begin{scope}[xshift=#1, yshift=#2]
\faceone \facetwo \facethree \righttwo \toptwo \fronttwo
\end{scope}
}
\newcommand\threecube[2]{
\begin{scope}[xshift=#1, yshift=#2]
\faceone \facetwo \facethree \rightthree \topthree \frontthree
\end{scope}
}
\newcommand\fourcube[2]{
\begin{scope}[xshift=#1, yshift=#2]
\faceone \facetwo \facethree \rightfour \topfour \frontfour
\end{scope}
}
\newcommand\fivecube[2]{
\begin{scope}[xshift=#1, yshift=#2]
\faceone \facetwo \facethree \rightfive \topfive \frontfive
\end{scope}
}
\newcommand\icube[2]{
\begin{scope}[xshift=#1, yshift=#2]
\faceone \facetwo \facethree \righti \topi \fronti
\end{scope}
}
\newcommand\ipluscube[2]{
\begin{scope}[xshift=#1, yshift=#2]
\faceone \facetwo \facethree \rightiplus\topiplus \frontiplus
\end{scope}
}
\newcommand\blankcube[2]{
\begin{scope}[xshift=#1, yshift=#2]
\faceoneblank \facetwoblank \facethreeblank \rightblank \topblank \frontblank
\end{scope}
}
\newcommand\xxaxis{0}
\newcommand\yyaxis{90}
\newcommand\sq[2]{\fill[fill=cyan, draw=black,shift={(\xxaxis:#1)},shift={(\yyaxis:#2)}] (0,0) -- (1,0) -- (1,-1) -- (0,-1) --(0,0);}
\newcommand\csq[2]{\fill[fill=magenta, draw=black,shift={(\xxaxis:#1)},shift={(\yyaxis:#2)}] (0,0) -- (1,0) -- (1,-1) -- (0,-1) --(0,0);}
\newcommand\bsq[2]{\fill[fill=white, fill opacity=0.5, densely dashed, draw=black,shift={(\xxaxis:#1)},shift={(\yyaxis:#2)}] (0,0) -- (1,0) -- (1,-1) -- (0,-1) --(0,0);}
\newcommand\kbox[1]{
  	\fill[fill=white, draw=black, shift={(\yyaxis:#1)}] (0,0) -- (3.5,0);
  	\fill[fill=white, draw=black, shift={(\yyaxis:#1)}, dashed] (3.5,0) -- (4.5,0);
 	\fill[fill=white, draw=black, shift={(\yyaxis:#1)}] (4.5,0) -- (6,0) -- (6,-2) -- (4.5,-2);
 	\fill[fill=white, draw=black, shift={(\yyaxis:#1)}, dashed] (3.5,-2) -- (4.5,-2);
 	\fill[fill=white, draw=black, shift={(\yyaxis:#1)}] (3.5,-2) -- (0,-2) --(0,0);
	\draw[fill=black] \foreach \x in {1,2,3,5} {(\x,#1) circle (1.5pt)}; 
	\draw[fill=black] \foreach \x in {1,2,3,5} {(\x,#1-2) circle (1.5pt)}; 
}
\newcommand\fivebox[1]{
  	\fill[fill=white, draw=black, shift={(\yyaxis:#1)}] (0,0) -- (6,0) -- (6,-2) -- (0,-2) -- (0,0);
	\draw[fill=black] \foreach \x in {1,2,3,4,5} {(\x,#1) circle (1.5pt)}; 
	\draw[fill=black] \foreach \x in {1,2,3,4,5} {(\x,#1-2) circle (1.5pt)}; 
}
\newcommand\dprimebox[1]{
  	\fill[fill=white, draw=black, shift={(\yyaxis:#1)}] (0,-1) -- (0,0) -- (7,0) -- (7,-1);
	\draw[fill=black] \foreach \x in {1,2,3,4,5,6} {(\x,#1) circle (1.5pt)}; 
}
\newcommand\topbox[1]{
  	\fill[fill=white, draw=black, shift={(\yyaxis:#1)}] (0,-2) -- (0,0) -- (6,0) -- (6,-2);
	\draw[fill=black] \foreach \x in {1,2,3,4,5} {(\x,#1) circle (1.5pt)}; 
}
\newcommand\middlebox[1]{
  	\fill[fill=white, draw=black, shift={(\yyaxis:#1)}] (0,-2) -- (0,0); 
 	\fill[fill=white, draw=black, shift={(\yyaxis:#1)}](6,0) -- (6,-2);
}
\newcommand\bottombox[1]{
  	\fill[fill=white, draw=black, shift={(\yyaxis:#1)}] (6,0) -- (6,-2) -- (0,-2) -- (0,0);
	\draw[fill=black] \foreach \x in {1,2,3,4,5} {(\x,#1-2) circle (1.5pt)}; 
}
\newcommand\tbox[1]{
  	\fill[fill=white, draw=black, shift={(\yyaxis:#1)}] (0,-2) -- (0,0) ;
	\fill[fill=white, draw=black, shift={(\yyaxis:#1)}, dashed] (0,0) -- (3,0) -- (3,-2);
}
\newcommand\botbox[1]{
  	\fill[fill=white, draw=black, shift={(\yyaxis:#1)}, dashed] (3,0) -- (3,-2) -- (0,-2);
  	\fill[fill=white, draw=black, shift={(\yyaxis:#1)}] (0,-2) -- (0,0);
}
\newcommand\tttbox[1]{
  	\fill[fill=white, draw=black, shift={(\yyaxis:#1)},dashed] (0,-2) -- (0,0) -- (1,0);
 	\fill[fill=white, draw=black, shift={(\yyaxis:#1)},dashed] (1,0) -- (3,0);
	\fill[fill=white, draw=black, shift={(\yyaxis:#1)},dashed] (3,0) -- (4,0) -- (4,-2);
	\node[above,scale=0.7] at (1,#1){$i$};
	\node[above,scale=0.7] at (2,#1){$i+1$};
	\node[above,scale=0.7] at (3,#1){$i+2$};
}
\newcommand\mmmidbox[1]{
  	\fill[fill=white, draw=black, shift={(\yyaxis:#1)},dashed] (0,-2) -- (0,0); 
 	\fill[fill=white, draw=black, shift={(\yyaxis:#1)},dashed](4,0) -- (4,-2); 
}
\newcommand\bbbotbox[1]{
  	\fill[fill=white, draw=black, shift={(\yyaxis:#1)},dashed] (4,0) -- (4,-2) -- (3,-2);
  	\fill[fill=white, draw=black, shift={(\yyaxis:#1)},dashed](3,-2) -- (1,-2);
  	\fill[fill=white, draw=black, shift={(\yyaxis:#1)}, dashed](1,-2) -- (0,-2) -- (0,0);
}
\newcommand\sixbox[1]{
  	\fill[fill=white, draw=black, shift={(\yyaxis:#1)}] (0,0) -- (7,0) -- (7,-2) -- (0,-2) -- (0,0);
	\draw[fill=black] \foreach \x in {1,2,3,4,5,6} {(\x,#1) circle (1.5pt)}; 
	\draw[fill=black] \foreach \x in {1,2,3,4,5,6} {(\x,#1-2) circle (1.5pt)}; 
}
\newcommand\longbox[1]{
  	\fill[fill=white, draw=black, shift={(\yyaxis:#1)}] (0,0) -- (11,0) -- (11,-2) -- (0,-2) -- (0,0);
	\draw[fill=black] \foreach \x in {1,2,3,4,5,6,7,8,9,10} {(\x,#1) circle (1.5pt)}; 
	\draw[fill=black] \foreach \x in {1,2,3,4,5,6,7,8,9,10} {(\x,#1-2) circle (1.5pt)}; 
}
\newcommand\eightbox[1]{
  	\fill[fill=white, draw=black, shift={(\yyaxis:#1)}] (0,0) -- (8,0) -- (8,-2) -- (0,-2) -- (0,0);
	\draw[fill=black] \foreach \x in {1,2,3,4,5,6,7} {(\x,#1) circle (1.5pt)}; 
	\draw[fill=black] \foreach \x in {1,2,3,4,5,6,7} {(\x,#1-2) circle (1.5pt)}; 
}
\newcommand\ninebox[1]{
  	\fill[fill=white, draw=black, shift={(\yyaxis:#1)}] (0,0) -- (9,0) -- (9,-2) -- (0,-2) -- (0,0);
	\draw[fill=black] \foreach \x in {1,2,3,4,5,6,7,8} {(\x,#1) circle (1.5pt)}; 
	\draw[fill=black] \foreach \x in {1,2,3,4,5,6,7,8} {(\x,#1-2) circle (1.5pt)}; 
}
\newcommand\lp[1]{
  	\fill[fill=white, draw=black, xshift=-1, shift={(\yyaxis:#1)}] (0.75,#1) ellipse (16pt and 8pt);
}
\newcommand\dlp[2]{
  	\fill[fill=white, draw=black] (#1,#2) ellipse (16pt and 8pt);
\draw[fill=cyan, draw=white, xshift=-16pt] (#1,#2) circle (2.8pt);
}
\begin{document}

\title{Conjugacy classes of cyclically fully commutative elements in Coxeter groups of type~$A$}
\author{Brooke Fox}


\begin{titlepage}
\ 

\vfill

\begin{center}
{\Large\textbf{A cellular quotient of the Temperley--Lieb\\
algebra of type $D$}}

\vspace{.5cm}

MS Thesis, Northern Arizona University, 2014
\end{center}

\vspace{1cm}

\noindent Kirsten Davis\\
Northern Arizona University\\
Department of Mathematics and Statistics\\
Northern Arizona University\\
Flagstaff, AZ 86011\\
\url{knd27@nau.edu}\\
\\
Advisor: Dana C.~Ernst, PhD\\
Second Reader: Michael Falk, PhD\\
Third Reader: Janet McShane, PhD

\vfill

\end{titlepage}

\pagenumbering{roman}
\pagestyle{plain}


\chapter*{Abstract}

The Temperley--Lieb algebra, invented by Temperley and Lieb in 1971, is a finite dimensional associative algebra that arose in the context of statistical mechanics. Later in 1971, Penrose showed that this algebra can be realized in terms of certain diagrams. Then in 1987, Jones showed that the Temperley--Lieb algebra occurs naturally as a quotient of the Hecke algebra arising from a Coxeter group of type $A$. This realization of the Temperley--Lieb algebra as a Hecke algebra quotient was later generalized to the case of an arbitrary Coxeter group by Graham.

Cellular algebras were introduced by Graham and Lehrer, and are a class of finite dimensional associative algebras defined in terms of a ``cell datum" and three axioms. The axioms allow one to define a set of modules for the algebra known as cell modules, and one of the main strengths of the theory is that it is relatively straightforward to construct and to classify the irreducible modules for a cellular algebra in terms of quotients of the cell modules. In this thesis, we present an associative diagram algebra that is a faithful representation of a particular quotient of the Temperley--Lieb algebra of type $D$, which has a basis indexed by the so-called type II fully commutative elements of the Coxeter group of type $D$. By explicitly constructing a cell datum for the corresponding diagram algebra, we show that the quotient is cellular.


\chapter*{Acknowledgements}

First and foremost, I would like to express my appreciation and thanks to my advisor Dana C.~Ernst for his dedicated involvement in every step throughout this process. Without his assistance and support, this thesis would have never been accomplished. I would also like to show gratitude to Michael Falk and Janet McShane for taking the time to read my thesis and provide me with useful comments. Most importantly, none of this could have happened without my family. This thesis stands as a testament to your unconditional love and encouragement.


\tableofcontents
\listoffigures


\chapter{Preliminaries}

\pagenumbering{arabic}


\section{Introduction}


This thesis is organized as follows. After necessary background material on Coxeter systems is presented in Section~\ref{sec:coxsyst}, we introduce the class of fully commutative elements in Section~\ref{sec:FC}. Then, in Section~\ref{Aheap} and Section~\ref{sec:Dheaps}, we discuss a visual representation for elements of Coxeter systems, called heaps. We then recall requisite terminology and facts about Hecke algebras in Section~\ref{hecke} and Temperley--Lieb algebras in Section~\ref{sec:TL}. In Chapter~\ref{ch:diag}, we establish our notation and introduce all of the terminology required to define an associative diagram algebra, $\DTL(D_n)$, that is a faithful diagrammatic representation of the Temperley--Lieb algebra of type $D$, $\TL(D_n)$. Next, in Section~\ref{sec:loopfree}, we construct a faithful diagrammatic representation, $\LFD(D_n)$, of a particular quotient of the Temperley--Lieb algebra, $\PFTL(D_n)$, that we introduce in Section~\ref{sec:pairfree}. After defining cellular algebras in Section~\ref{sec:cellular}, we explicitly construct a cell datum for $\LFD(D_n)$ that is used to prove the main result (Theorem~\ref{loopfreecellular} and Corollary~\ref{pairfreecellular}), which says that $\LFD(D_n)$ and therefore $\PFTL(D_n)$ are cellular. 


\section{Coxeter systems}\label{sec:coxsyst}


A \emph{Coxeter system} is a pair $(W,S)$ consisting of a finite set $S$ of generating involutions and a group $W$, called a \emph{Coxeter group}, with presentation
\[
W = \langle S :(st)^{m(s, t)} = e \text{ for } m(s, t) < \infty \rangle,
\] 
where $e$ is the identity, $m(s, t) = 1$ if and only if $s=t$, and $m(s, t) = m(t, s)$.  It turns out that the elements of $S$ are distinct as group elements and that $m(s, t)$ is the order of $st$. 

Since $s$ and $t$ are elements of order 2, the relation $(st)^{m(s,t)}=e$ can be rewritten as 
\begin{equation}
\label{relation}
{\underbrace{sts \cdots }_{m(s,t)} } = {\underbrace{tst \cdots}_{m(s,t)}}
\end{equation}
with $m(s,t)\ge 2$ factors. If $m(s,t)=2$, then $st=ts$ is called a \emph{short braid relation} and $s$ and $t$ commute. Otherwise, if $m(s,t)\ge 3$, then relation~\ref{relation} is called a \emph{long braid relation}. Replacing $\underbrace{sts \cdots }_{m(s,t)}$ with $\underbrace{tst \cdots}_{m(s,t)}$ will be referred to as a \emph{braid move}.

We can represent $(W,S)$ with a unique \emph{Coxeter graph}, $\Gamma$, having: 
\begin{enumerate}
\item vertex set $S$;
\item edges $\{s,t\}$ labeled with $m(s,t)$ for all $m(s,t)\ge3$.
\end{enumerate}
Since $m(s,t)\ge 3$ occurs most frequently, it is customary to leave the corresponding edge unlabeled.

If $(W,S)$ is a Coxeter system with Coxeter graph $\Gamma$, then for emphasis, we may denote the group as $W(\Gamma)$. There is a one-to-one correspondence between Coxeter systems and Coxeter graphs. Given the Coxeter graph $\Gamma$, we can uniquely reconstruct the corresponding Coxeter system. Note that generators $s$ and $t$ are not connected in the Coxeter graph if and only if $s$ and $t$ commute. In addition, the Coxeter group $W$ is said to be \emph{irreducible} if and only if the corresponding Coxeter graph is connected. If the Coxeter graph is disconnected, the connected components correspond to factors in a direct product of irreducible Coxeter groups~\cite[Section 2.2]{Humphreys.J:A}.
 
\begin{example}
~
\begin{enumerate}[label=(\alph*)]
\item We define $A_{n}$ to be the Coxeter graph in Figure~\ref{fig:Agraph}. Given $A_{n}$, we can construct $\(\WA, S\)$ with the generators $\left\{ s_{1},s_{2},\ldots,s_{n}\right\} $ and defining relations
\begin{align*}
s_{i}^2&=  e & \text{for all }i,\\
s_{i}s_{j}&=  s_{j}s_{i} & \text{when }\left|i-j\right|>1,\\
s_{i}s_{j}s_{i}&=  s_{j}s_{i}s_{j} & \text{when }\left|i-j\right|=1.
\end{align*}
The Coxeter group $\WA$ is isomorphic to the symmetric group $S_{n+1}$ under $s_{i}\mapsto \(i,i+1\)$. 

\item We define $D_{n}$ to be the Coxeter graph in Figure~\ref{fig:Dgraph}. Given $D_{n}$, we can construct $\(\WD, S\)$ with the generators $\left\{ s_{\overline{1}},s_{1},s_{2},\ldots,s_{n-1}\right\} $ and defining relations
\begin{align*}
s_{i}^2&=  e & \text{for all }i,\\
s_{i}s_{j}&=  s_{j}s_{i} & \text{when }\left|i-j\right|>1 \text{ for }i, j \in \left\{1,2,\ldots,n\right\},\\
s_{i}s_{j}s_{i}&=  s_{j}s_{i}s_{j} & \text{when }\left|i-j\right|=1 \text{ for }i, j \in \left\{1,2,\ldots,n\right\}, \\
s_{\overline{1}}s_{i}&=  s_{i}s_{\overline{1}} & \text{for all }i\in \left\{1,3,\ldots,n\right\},\\
s_{\overline{1}}s_{2}s_{\overline{1}}&=  s_{2}s_{\overline{1}}s_{2}.
\end{align*}
The Coxeter group $\WD$ is isomorphic to $S_{n}^{D}$, where $S_{n}^{D}$ is the subgroup of the group of signed permutations having an even number of sign changes, called the \emph{group of even signed permutations}. The main focus of this thesis will be the Coxeter system of type $D_n$.
\end{enumerate}
\end{example}

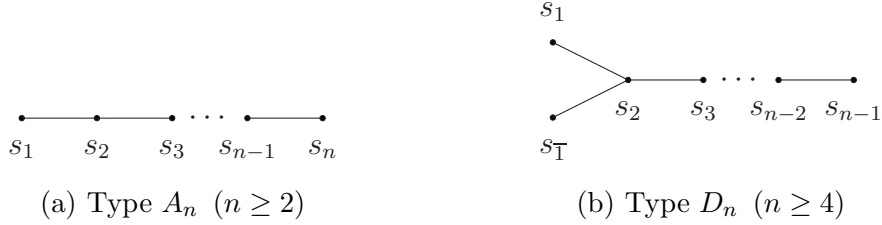
\begin{figure}[h]
\centering
\begin{subfigure}[b]{0.4\textwidth}
\centering
\begin{tikzpicture} \draw[fill=black] \foreach \x in {1,2,3,4,5} {(\x,10) circle (1pt)}; \draw \foreach \x in {1,2,3} {(\x,10) node[label=below:$s_{\x}$]{}};  \draw {(4,10) node[label=below:$s_{n-1}$]{}}; \draw {(5,10) node[label=below:$s_{n}$]{}}; \draw {(3.5,10) node[]{$\cdots$}}; \draw[-] (1,10) -- (2,10); \draw[-] (2,10) -- (3,10); \draw[-] (4,10) -- (5,10); \end{tikzpicture}
\caption{Type $A_{n}~\(n\ge 2\)$}
\label{fig:Agraph}
\end{subfigure}
\quad
\begin{subfigure}[b]{0.4\textwidth}
\centering
\begin{tikzpicture} \draw[fill=black] \foreach \x in {2,3,4,5} {(\x,10) circle (1pt)}; \draw \foreach \x in {2,3} {(\x,10) node[label=below:$s_{\x}$]{}}; \draw[fill=black] \foreach \x in {9.5,10.5} {(1,\x) circle (1pt)}; \draw {(1,9.5) node[label=below:$s_{\overline{1}}$]{}}; \draw {(1,10.5) node[label=above:$s_{1}$]{}}; \draw {(4,10) node[label=below:$s_{n-2}$]{}}; \draw {(5,10) node[label=below:$s_{n-1}$]{}}; \draw {(3.5,10) node[]{$\cdots$}}; \draw[-] (1,9.5) -- (2,10); \draw[-] (1,10.5) -- (2,10); \draw[-] (2,10) -- (3,10); \draw[-] (4,10) -- (5,10); \end{tikzpicture} 
\caption{Type $D_{n}~\(n\ge 4\)$}
\label{fig:Dgraph}
\end{subfigure}
\caption{Coxeter graphs corresponding to Coxeter systems of type $A_{n}$ and $D_{n}$.}
\label{fig:graphs}
\end{figure}

Given a Coxeter system $(W,S)$, a word $s_{x_1}s_{x_2}\cdots s_{x_m}$ in the free monoid on $S$ is called an \emph
{expression} for $w\in W$ if it is equal to $w$ when considered as a group element. If $m$ is minimal among all expressions for $w$, the corresponding word is called a \emph{reduced expression} for $w$. In this case, we define the \emph{length} of $w$ to be $\ell(w):=m$. Given $w \in W$, if we wish to emphasize a fixed, possibly reduced, expression for $w$, we represent it as $\w=s_{x_1}s_{x_2}\cdots s_{x_m}.$

\begin{theorem}[Matsumoto's Theorem~\cite{Geck2000}]
If $w \in W$, then every reduced expression for $w$ can be obtained from any other by applying a sequence of braid moves of the form 
\[
{\underbrace{sts \cdots }_{m(s,t)} } \mapsto {\underbrace{tst \cdots}_{m(s,t)}}
\]
where $s,t \in S$ and $m(s,t)\ge 2$.\qed
\end{theorem}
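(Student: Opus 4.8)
The plan is to induct on $\ell(w)$. When $\ell(w)\le 1$ the element $w$ has a unique reduced expression and there is nothing to prove, so suppose $\ell(w)=m\ge 2$ and that the result holds for all elements of smaller length. Fix two reduced expressions $s_{x_1}s_{x_2}\cdots s_{x_m}$ and $s_{y_1}s_{y_2}\cdots s_{y_m}$ for $w$, and put $s:=s_{x_1}$ and $t:=s_{y_1}$. If $s=t$, then $s_{x_2}\cdots s_{x_m}$ and $s_{y_2}\cdots s_{y_m}$ are both reduced expressions for $sw$, which has length $m-1$; by the inductive hypothesis they are related by a sequence of braid moves, and performing those same moves after the initial letter $s$ relates the two expressions for $w$.

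Now assume $s\ne t$. Here I would invoke the Exchange Condition for Coxeter systems --- a standard consequence of the faithfulness of the geometric (reflection) representation of $(W,S)$; see, e.g., \cite{Humphreys.J:A} or \cite{Geck2000} --- in the following form: since $s$ and $t$ both satisfy $\ell(s w)<\ell(w)$ and $\ell(t w)<\ell(w)$, the order $m(s,t)$ is finite and $w$ has a reduced expression whose first $m(s,t)$ letters spell the alternating word $\underbrace{sts\cdots}_{m(s,t)}$. Write $k:=m(s,t)$, and fix a reduced expression $\overline{u}$ of the element $u$ determined by $w=\big(\underbrace{sts\cdots}_{k}\big)u$, so that $\ell(w)=k+\ell(u)$; because $\underbrace{sts\cdots}_{k}=\underbrace{tst\cdots}_{k}$ in $W$, the word $\underbrace{tst\cdots}_{k}\,\overline{u}$ is also a reduced expression for $w$. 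Consequently $\underbrace{ts\cdots}_{k-1}\,\overline{u}$ and $s_{x_2}\cdots s_{x_m}$ are two reduced expressions for $sw$, and $\underbrace{st\cdots}_{k-1}\,\overline{u}$ and $s_{y_2}\cdots s_{y_m}$ are two reduced expressions for $tw$; since $\ell(sw)=\ell(tw)=k-1+\ell(u)<\ell(w)$, the inductive hypothesis applies to each pair. Chaining the resulting braid moves gives
\[
s_{x_1}s_{x_2}\cdots s_{x_m}\ \rightsquigarrow\ \underbrace{sts\cdots}_{k}\,\overline{u}\ \mapsto\ \underbrace{tst\cdots}_{k}\,\overline{u}\ \rightsquigarrow\ s_{y_1}s_{y_2}\cdots s_{y_m},
\]
where the middle arrow is a single braid move; this completes the induction.

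The heart of the argument, and the one step I would take as an external input rather than derive from the Coxeter presentation directly, is the Exchange Condition --- concretely, the claim that two distinct left descents $s,t$ of $w$ force $m(s,t)<\infty$ together with the alternating reduced prefix of length $m(s,t)$. This is precisely where genuine structural information about Coxeter groups is needed (the reflection representation, or equivalently Tits' solution of the word problem, which excludes relations not implied by the braid relations); everything else is bookkeeping with the inductive hypothesis. Two small points to be careful about when filling in details: the \emph{same} reduced expression $\overline{u}$ must be used on both sides so that the two invocations of the inductive hypothesis can be concatenated, and the degenerate case $\ell(u)=0$ should be noted separately, where $w=\underbrace{sts\cdots}_{k}=\underbrace{tst\cdots}_{k}$ and the single middle braid move already finishes the proof.
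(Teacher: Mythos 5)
The paper does not prove this statement --- it is quoted as a known theorem with a citation to Geck--Pfeiffer and a \qed --- so there is no internal proof to compare against; your argument is the standard induction on $\ell(w)$ that appears in that cited source, and it is correct. The one external input you rely on is accurately identified and correctly stated (though it is really the rank-two parabolic fact that two distinct left descents $s,t$ force $m(s,t)<\infty$ and a reduced prefix equal to the longest element of $\langle s,t\rangle$, rather than the Exchange Condition itself), and the bookkeeping --- applying the inductive hypothesis to $sw$ and $tw$ with the same suffix $\overline{u}$ and chaining through the single braid move --- is handled correctly.
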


It follows from Matsumoto's theorem that any two reduced expressions for $w \in W$ have the same number of generators in the expression. The \emph{support} of an element $w \in W$, denoted $\supp(w)$, is the set of all generators appearing in any reduced expression for $w$, which is well-defined by Matsumoto's Theorem. We will use $\w$ to represent a fixed expression, possibly reduced, for $w \in W$. We define a \emph{subexpression} of $\w$ to be any subsequence of $\w$.  If $x \in W(\Gamma)$ has an expression that is equal to a subexpression of $\w$, then we write $x \leq w$.  This is a well-defined partial order~\cite[Chapter 5]{Humphreys.J:A} on $W(\Gamma)$ and is called the \emph{(strong) Bruhat order}. We will refer to a consecutive subexpression of $\w$ as a \emph{subword}. 

Let $w \in W$.  We define
\[
\L(w):=\{s \in S: \ell(sw) < \ell(w)\}
\]
and
\[
\R(w):=\{s\in S:\ell(ws)<\ell(w)\}.
\]
The set $\L(w)$ is called the \emph{left descent set} of $w$ and $\R(w)$ is called the \emph{right descent set}. It turns out that $s \in \L(w)$ if and only if $w$ has a reduced expression beginning with $s$ and $s \in \R(w)$ if and only if $w$ has a reduced expression ending with $s$.

\begin{example}
Let $w\in W(A_{4})$ and let $\w =s_{1}s_{2}s_{1}s_{4}s_{2}$ be an expression for $w$. Then
\begin{align*}
s_{1}s_{2}s_{1}s_{4}s_{2}&=  s_{2}s_{1}s_{2}s_{4}s_{2}\\
&=  s_{2}s_{1}s_{2}s_{2}s_{4} \\
&=  s_{2}s_{1}s_{4}. 
\end{align*}
This shows that $\w$ is not reduced. However, it turns out that $s_{2}s_{1}s_{4}$ is a reduced expression for $w$ and hence $l(w)=3$. Note that $\L(w)=\{s_2,s_4\},\R(w)=\{s_1,s_4\}$, and $s_{1}s_{4}$ is a subword of $\w$.
\end{example}


\section{Fully commutative}\label{sec:FC}


Let $(W,S)$ be a Coxeter system of type $\Gamma$ and let $w \in W$. Following Stembridge~\cite{Stembridge1996}, we define a relation $\sim$ on the set of reduced expressions for $w$. Let $\w_1$ and $\w_2$ be two reduced expressions for $w$. We define $\w_1 \sim \w_2$ if we can obtain $\w_1$ from $\w_1$ by applying a single commutation move of the form $st\mapsto ts$, where $m(s,t)=2$. Now, define the equivalence relation $\approx$ by taking the reflexive transitive closure of $\sim$. Each equivalence class under $\approx$ is called a \emph{commutation class}. If $w$ has a single commutation class, then we say that $w$ is \emph{fully commutative}. That is, $w$ is fully commutative if any two reduced expressions for $w$ can be transformed into each other by a sequence of short braid relations. We denote the set of all fully commutative elements of $W$ by $\FC(\Gamma)$ where $\Gamma$ is the corresponding Coxeter graph to $(W,S)$. The following theorem shows that we never have an opportunity to apply a long braid relation when $w$ is fully commutative.

\begin{theorem}[Stembridge~\cite{Stembridge1996}]\label{stem}
An element $w\in W$ is fully commutative if and only if no reduced expression for $w$ contains ${\underbrace{sts \cdots }_{m(s,t)} }$ as a subword for $m(s,t)\ge3$. \qed
\end{theorem}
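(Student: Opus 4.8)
The plan is to prove both directions of the biconditional, the harder one being the "only if" direction. For the "if" direction, suppose $w$ is \emph{not} fully commutative; I want to produce a reduced expression containing $\underbrace{sts\cdots}_{m(s,t)}$ as a subword for some $m(s,t)\ge 3$. Since $w$ has at least two commutation classes, there exist reduced expressions $\w_1, \w_2$ for $w$ lying in different $\approx$-classes. By Matsumoto's Theorem, $\w_2$ is obtained from $\w_1$ by a sequence of braid moves; since $\w_1 \not\approx \w_2$, at least one of these moves must be a long braid move (one with $m(s,t)\ge 3$). Take the first such long braid move in the sequence: just before it is applied, we have a reduced expression for $w$ (reached from $\w_1$ by short braid moves only, hence $\approx \w_1$) which contains $\underbrace{sts\cdots}_{m(s,t)}$ as a subword — precisely what we need. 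So the contrapositive of "if" holds.

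For the "only if" direction, suppose some reduced expression $\w = s_{x_1}\cdots s_{x_m}$ for $w$ contains $\underbrace{sts\cdots}_{m(s,t)}$ as a \emph{subword} (consecutive), with $m=m(s,t)\ge 3$; I must show $w$ is not fully commutative. The point is that this subword can be rewritten via the long braid relation to $\underbrace{tst\cdots}_{m(s,t)}$, yielding another reduced expression $\w'$ for $w$ of the same length. I then need to argue $\w \not\approx \w'$, i.e. that no sequence of short braid moves transforms one into the other. The cleanest route is an invariant: following Stembridge, associate to each reduced expression the multiset of ordered pairs recording, for each pair of non-commuting generators $\{s,t\}$ (with $m(s,t)\ge 3$) appearing in the word, some combinatorial datum (e.g. the relative order of the first occurrences, or more robustly the count of "$s$ before $t$ versus $t$ before $s$" patterns) that is preserved by every commutation move $ab\mapsto ba$ with $m(a,b)=2$ but \emph{changed} by the long braid move on $\{s,t\}$. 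One verifies that a commutation move only swaps two adjacent commuting letters, which cannot alter the relative order of any two non-commuting generators' occurrences, hence preserves the invariant; whereas the long braid move visibly changes the leading generator from $s$ to $t$, changing the invariant. Therefore $\w$ and $\w'$ lie in distinct commutation classes, so $w$ is not fully commutative.

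The main obstacle is making the invariant in the "only if" direction precise and checking it is well-behaved. One must be careful that the chosen statistic is genuinely invariant under \emph{all} commutation moves (not just the one being performed) and that it really does detect the long braid move regardless of what else appears in the word around the subword; handling the possibility that $s$ and $t$ each occur multiple times in $\w$ requires choosing the statistic carefully (counting inversions among occurrences of $s$ and $t$, read left to right, modulo the relation $m(s,t)$, is a standard fix). Once the invariant is set up, the verification is a routine case check on which two letters a commutation move swaps.

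The remaining clause — that a fully commutative $w$ never offers an opportunity to apply a long braid relation, i.e. no reduced expression contains such a subword — is just the contrapositive of the "only if" direction already proved, so nothing further is needed.
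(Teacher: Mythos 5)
The paper does not actually prove this statement---it is quoted from Stembridge with only a citation---so there is no in-paper argument to compare against; your proposal must stand on its own, and it does. Both directions are sound: the ``if'' direction via Matsumoto's theorem (locating the first long braid move in a chain connecting two inequivalent reduced expressions) is exactly right, and the ``only if'' direction is Stembridge's own argument. The only thing to tighten is the invariant, which you leave vaguer than necessary: the clean choice is simply the subsequence of the word obtained by deleting every letter not in $\{s,t\}$. A commutation move swaps two adjacent \emph{commuting} letters, and since $s$ and $t$ do not commute, such a move never transposes an $s$ with a $t$; hence this restricted subsequence is preserved by every short braid move, with no case analysis and no need for counting ``modulo $m(s,t)$.'' The long braid move replaces a consecutive block $\underbrace{sts\cdots}_{m(s,t)}$ by $\underbrace{tst\cdots}_{m(s,t)}$ and therefore visibly changes this subsequence (the leading letter of the block flips), so the two reduced expressions lie in different commutation classes and $w$ is not fully commutative. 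With that substitution your ``main obstacle'' disappears and the proof is complete.
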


\begin{remark}\label{subwords}
The elements of $FC(D_n)$ are precisely those whose reduced expressions avoid consecutive subwords of the type $s_{i}s_{j}s_{i}$ when $s_{i}$ and $s_{j}$ are connected in the Coxeter graph of type $D_n$
\end{remark}

\begin{example}
Figure~\ref{fig:commclasses} depicts all possible reduced expressions of $w$ and the relationships among them via commutations and long braid moves. It is clear by inspection that there are two commutation classes for $w$ represented by the two boxes, hence $w$ is not fully commutative.
\end{example}

\begin{figure}[h]
\centering
\begin{tikzpicture}
\draw (0,0) -- (4,0) -- (4,-5) -- (0,-5) -- (0,0);
\node at (2,-1) {$s_3s_1s_2s_3s_4$};
\tikzset{>=latex}
\draw [green, <->,>=stealth] (2,-1.5) to (2,-3.5) node[scale=0.85, left] at (2,-2.5) {$s_1s_3=s_3s_1$};
\node at (2,-4) {$s_1s_3s_2s_3s_4$};
\draw [red, <->,>=stealth] (4.5,-4) to (6.5,-4) node[scale=0.85, below] at (5.5,-4) {$s_3s_2s_3=s_2s_3s_2$};
\draw (7,0) -- (11,0) -- (11,-5) -- (7,-5) -- (7,0);
\node at (9,-1) {$s_1s_2s_3s_4s_2$};
\tikzset{>=latex}
\draw [cyan, <->,>=stealth] (9,-1.5) to (9,-3.5) node[scale=0.85, right] at (9,-2.5) {$s_2s_4=s_4s_2$};
\node at (9,-4) {$s_1s_2s_3s_2s_4$};
\end{tikzpicture}
\caption{Commutation classes for a non-fully commutative element.}
\label{fig:commclasses}
\end{figure}
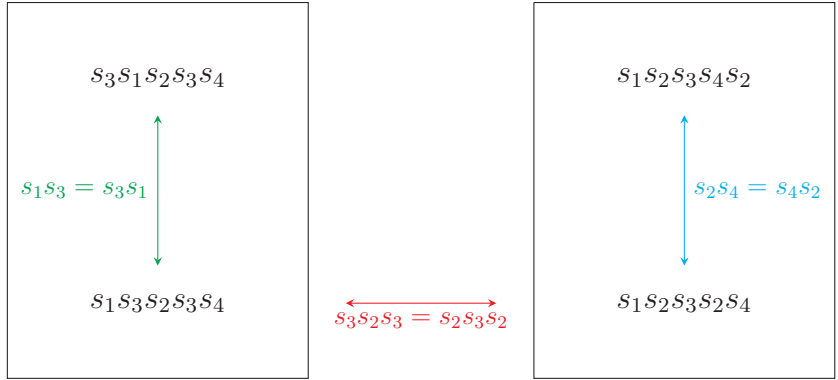

\begin{example}
Figure~\ref{fig:commclass} depicts all possible reduced expressions of $w$ and the relationships among them via only commutations. In this case, we see that every reduced expression for $w$ can be obtained from another via a sequence of commutation moves, and hence there is only one commutation class for $w$. Therefore $w$ is fully commutative. It is also clear that there is never an opportunity to apply a long braid relation, which agrees with Theorem~\ref{stem}.
\end{example}

\begin{figure}[h]
\centering
\begin{tikzpicture}
\draw (-1,0) -- (9,0) -- (9,-6) -- (-1,-6) -- (-1,0);
\node at (3,-1) {$s_3s_1s_2s_4s_3$};
\tikzset{>=latex}
\draw [green, <->,>=stealth] (2.75,-1.5) to (1.25,-2.5) node[scale=0.85, above left] at (2,-2) {$s_1s_3=s_3s_1$};
\draw [cyan, <->,>=stealth] (3.25,-1.5) to (4.75,-2.5) node[scale=0.85, above right] at (4,-2) {$s_2s_4=s_4s_2$};
\node at (1,-3) {$s_1s_3s_2s_4s_3$};
\node at (5,-3) {$s_3s_1s_4s_2s_3$};
\draw [green, <->,>=stealth] (4.75,-3.5) to (3.25,-4.5) node[scale=0.85, below right] at (4,-4) {$s_1s_3=s_3s_1$};
\draw [cyan, <->,>=stealth] (1.25,-3.5) to (2.75,-4.5) node[scale=0.85, below left] at (2,-4) {$s_2s_4=s_4s_2$};
\node at (3,-5) {$s_1s_3s_4s_2s_3$};
\draw [magenta, <->,>=stealth] (5.25,-3.5) to (6.75,-4.5) node[scale=0.85, above right] at (6,-4) {$s_1s_3=s_3s_1$};
\node at (7,-5) {$s_3s_4s_1s_2s_3$};
\end{tikzpicture}
\caption{Commutation class for a fully commutative element.}
\label{fig:commclass}
\end{figure}
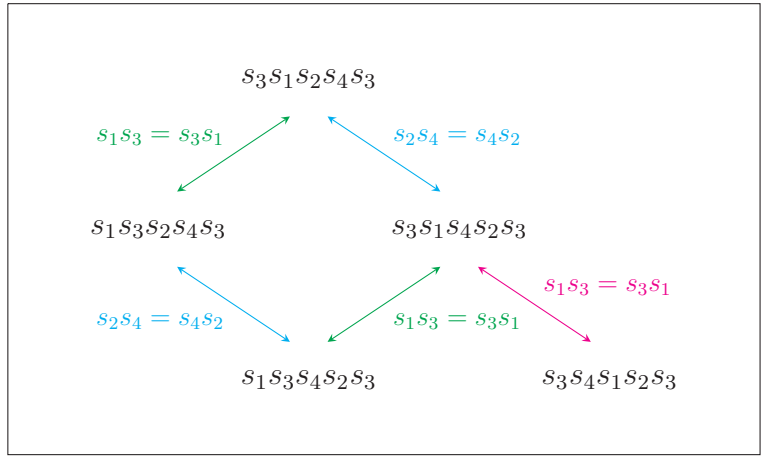

Stembridge classified the irreducible Coxeter groups that contain a finite number of fully commutative elements, the so-called \emph{FC-finite Coxeter groups}.  While $W(D_{n})$ is a finite Coxeter group, and therefore contains a finite number of fully commutative elements, there are examples of infinite Coxeter groups that contain a finite number of fully commutative elements.  For example, Coxeter groups of type $E_n$ for $n\geq 9$ (see Figure~\ref{fig:cfcfg}) are infinite, but contain only finitely many fully commutative elements.

\begin{theorem}[Stembridge~\cite{Stembridge1996}]
The irreducible FC-finite Coxeter groups are the Coxeter groups of type $A_{n}\left(n\ge 1\right),$ $B_{n}\left(n\ge 2\right),$ $D_{n}\left(n\ge 4\right),$ $E_{n}\left(n\ge 6\right),$ $F_{n}\left(n\ge 4\right),$ $H_{n}\left(n\ge 3\right)$ and $I_{2}\left(m\right)\left(3\le m<\infty \right)$ with corresponding Coxeter graphs shown in Figure~\ref{fig:cfcfg}. \qed
\end{theorem}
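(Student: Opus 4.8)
This is a classification statement, so the proof runs in two directions: (i) each Coxeter group on the list has only finitely many fully commutative elements, and (ii) every irreducible Coxeter group whose Coxeter graph is not on the list has infinitely many. The engine for direction (ii) is the following \emph{heredity} principle: if $\Gamma'$ is the full subgraph of $\Gamma$ on a subset of its vertices, then the natural inclusion $W(\Gamma')\hookrightarrow W(\Gamma)$ carries $\FC(\Gamma')$ into $\FC(\Gamma)$. This holds because, using the standard fact that a standard parabolic subgroup of a Coxeter group is again a Coxeter group with the inherited length function, a word in the generators lying in $\Gamma'$ is reduced in $W(\Gamma')$ precisely when it is reduced in $W(\Gamma)$, while by Theorem~\ref{stem} full commutativity is the property that no reduced word contains $\underbrace{sts\cdots}_{m(s,t)}$ as a subword, a condition referring only to the letters actually used. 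Consequently FC-finiteness is inherited by full subgraphs, and since $S$ is finite a descent argument shows that every FC-infinite $\Gamma$ contains a \emph{minimal obstruction}: a graph $\Gamma_0$ such that $W(\Gamma_0)$ is FC-infinite but $W(\Gamma'')$ is FC-finite for every proper full subgraph $\Gamma''$ of $\Gamma_0$. Thus direction (ii) reduces to: (a) exhibiting, for each minimal obstruction, an explicit infinite family of fully commutative elements; and (b) a graph-theoretic verification that a connected Coxeter graph containing no minimal obstruction must be one of $A_n$, $B_n$, $D_n$, $E_n$, $F_n$, $H_n$, $I_2(m)$.

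For step (a) I would use the heap of a reduced word together with Theorem~\ref{stem}. The minimal obstructions are the diagrams lying ``just beyond'' the list: the cycles $\tilde A_n$; the affine diagrams $\tilde B_n$, $\tilde C_n$, $\tilde D_n$ (the last including the four-pronged star $\tilde D_4$), $\tilde E_6$, $\tilde E_7$, and $\tilde G_2$; a short supplementary list of ``heavy'' diagrams (for instance a path with a pendant vertex attached across an edge of label $\ge 6$, or a path whose label-$5$ edge lies one step into the interior); and the infinite dihedral graph $I_2(\infty)$. In each case there is a ``translation-like'' element -- march once around the cycle, slide a staircase of generators down a long arm, or iterate a fixed block across the heavy edge -- whose $k$-fold iterate one checks, via the heap criterion, is reduced and free of long braid subwords for every $k\ge 1$, hence fully commutative; letting $k\to\infty$ then yields infinitely many. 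This is a finite list of explicit, individually routine, constructions.

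For direction (i), the list members that happen to be finite groups -- all of $A_n$, $B_n$, $D_n$, $I_2(m)$, together with $E_6$, $E_7$, $E_8$, $F_4$, $H_3$, $H_4$ -- are FC-finite for trivial reasons. The remaining members $E_n\ (n\ge 9)$, $F_n\ (n\ge 5)$, $H_n\ (n\ge 5)$ are \emph{infinite} Coxeter groups -- they include, e.g., the affine group $\tilde E_8=E_9$ -- and so realize the ``infinite group, finite $\FC$'' phenomenon noted earlier; for these one must show directly that $\FC(\Gamma)$ is finite, equivalently (since there are only finitely many group elements of each length) that $\ell$ is bounded on $\FC(\Gamma)$. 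I would prove this by a dedicated analysis of FC heaps over these diagrams, based on the interleaving constraints of Theorem~\ref{stem} -- between two consecutive occurrences of a generator $s$ each neighbour $t$ of $s$ must occur, and must occur exactly once when $m(s,t)\ge 3$, so $s$ and such a $t$ interleave and their multiplicities differ by at most one -- combined with a closer examination of the heap near the unique branch point (for $E_n$) or the unique edge of label $\ge 4$ (for $F_n$, $H_n$), which forces the heap to have bounded ``height'' unlike, say, a heap over an infinite path. This is a direct but lengthy analysis, and it is one of the more technical parts of the proof.

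The principal obstacle is step (b): organising the case analysis over Coxeter-graph shapes so that it is both exhaustive and non-redundant, and exhibiting in each excluded shape a concrete minimal obstruction. One proceeds in order: a connected graph avoiding all obstructions has no cycle (else $\tilde A_n$), so is a tree; has maximum degree $\le 3$ (else $\tilde D_4$); has at most one vertex of degree $3$ (else $\tilde D_n$); if it has a branch vertex then every edge is unlabelled and the three arm lengths are pinned down to type $D$ or type $E$ (using $\tilde D_4$, $\tilde E_6$, $\tilde E_7$, and the small fork-meets-label-$4$ obstructions); and among the remaining path diagrams at most one edge may carry a label $\ge 4$, a label-$4$ edge must be within distance one of an endpoint (giving $B_n$ or $F_n$), a label-$5$ edge must be an end edge (giving $H_n$), and a label $\ge 6$ forces the graph to be a single edge (giving $I_2(m)$, with $m=\infty$ excluded since $I_2(\infty)$ is itself an obstruction). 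Keeping this bookkeeping airtight, and matching each excluded configuration to an obstruction genuinely sitting inside it, is the delicate part; once the obstruction diagrams are correctly identified, the constructions of (a) and the finiteness checks of (i) are comparatively mechanical.
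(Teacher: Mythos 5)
The paper does not actually prove this statement: it is quoted from Stembridge with the proof omitted (hence the terminal $\Box$), so there is no internal argument to measure yours against. Structurally, your outline is faithful to Stembridge's actual proof: the heredity of FC-finiteness under passage to standard parabolic subgroups is correct for exactly the reason you give (reduced words and the subword criterion of Theorem~\ref{stem} only involve the letters actually used), and the reduction to a finite list of minimal FC-infinite graphs, an explicit infinite $\FC$-family inside each, a graph-theoretic exhaustion, and a separate finiteness argument for the list members is precisely the right architecture. One point you implicitly get right but that deserves emphasis: ``affine'' must not be conflated with ``obstruction,'' since $\widetilde{E}_8$ and $\widetilde{F}_4$ are affine yet coincide with $E_9$ and $F_5$, which sit on the FC-finite list; your obstruction list correctly stops at $\widetilde{E}_6,\widetilde{E}_7$ and omits $\widetilde{F}_4$, and the exhaustion in your step (b) has to be run against the true minimal list rather than against the family of affine diagrams.

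Two caveats. First, what you have is a plan rather than a proof: all three loads --- the verified infinite families, the exhaustive case analysis, and above all the finiteness of $\FC(\Gamma)$ for the \emph{infinite} list members $E_{n\ge 9}$, $F_{n\ge 5}$, $H_{n\ge 5}$ --- are described as ``routine,'' ``lengthy,'' or ``delicate'' without being carried out, and the last of these is the genuinely hard direction. Second, the combinatorial tool you state for that hard direction is wrong as written: it is not true that between two consecutive occurrences of a generator $s$ in a reduced word for a fully commutative element every neighbour of $s$ must occur. For example, in $W(D_4)$ the element with reduced word $s_2s_1s_{\overline{1}}s_2$ is fully commutative of length $4$, yet the neighbour $s_3$ of $s_2$ does not appear between the two occurrences of $s_2$. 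The correct statement (visible in the paper's Lemma~\ref{lem:notFC3Dheaps}) is that there must be at least \emph{two} occurrences of letters not commuting with $s$ in between; recovering the sharper interleaving control you want from this weaker hypothesis is exactly the content of the case analysis near the branch vertex or the labelled edge, so as stated your finiteness argument has a gap at its foundation.
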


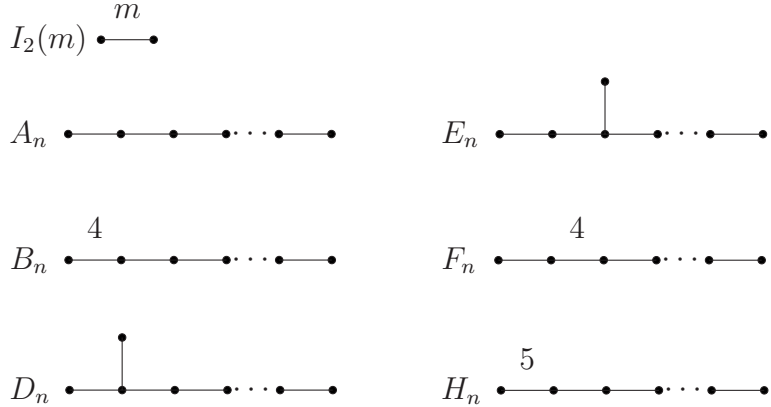
\begin{figure}[h]
\centering
\begin{tabular}{ll}
\begin{tikzpicture}[scale=.7]
\draw[fill=black] \foreach \x in {1,2} {(\x,0) circle (2pt)};
\draw {(0,0) node{$I_{2}(m)$}
(1.5,0) node[label=above:$m$]{}
[-] (1,0) -- (2,0)};
\end{tikzpicture}
& \\
\begin{tikzpicture}[scale=.7]
\draw[fill=black] \foreach \x in {1,2,...,6} {(\x,10) circle (2pt)};
\draw {(.25,10) node{$A_{n}$}
(4.5,10) node{$\cdots$}
[-] (1,10) -- (4,10)
[-] (5,10) -- (6,10)}; 
\end{tikzpicture}
&
\quad  \quad \begin{tikzpicture}[scale=.7]
\draw[fill=black] \foreach \x in {1,2,...,6} {(\x,4.5) circle (2pt)};
\draw[fill=black] (3,5.5) circle (2pt);
\draw {(.25,4.5) node{$E_{n}$}
(4.5,4.5) node{$\cdots$}
[-] (1,4.5) -- (4,4.5)
[-] (5,4.5) -- (6,4.5)
[-] (3,4.5) -- (3,5.5)};
\end{tikzpicture}\\
\\
\begin{tikzpicture}[scale=.7]
\draw [fill=black] \foreach \x in {1,2,...,6} {(\x,8.5) circle (2pt)};
\draw {(.25,8.5) node{$B_{n}$}
(1.5,8.5) node[label=above:$4$]{}
(4.5,8.5) node{$\cdots$}
[-] (1,8.5) -- (4,8.5)
[-] (5,8.5) -- (6,8.5)}; 
\end{tikzpicture}
&
\quad  \quad \begin{tikzpicture}[scale=.7]
\draw[fill=black] \foreach \x in {1,2,...,6} {(\x,3) circle (2pt)};
\draw {(.25,3) node{$F_{n}$}
(2.5,3) node[label=above:$4$]{}
(4.5,3) node{$\cdots$}
[-] (1,3) -- (4,3)
[-] (5,3) -- (6,3)};
\end{tikzpicture}\\
\\
\begin{tikzpicture}[scale=.7]
\draw[fill=black] \foreach \x in {1,2,...,6} {(\x,6.5) circle (2pt)};
\draw[fill=black] (2,7.5) circle (2pt);
\draw {(.25,6.5) node{$D_{n}$}
(4.5,6.5) node{$\cdots$}
[-] (1,6.5) -- (4,6.5)
[-] (5,6.5) -- (6,6.5)
[-] (2,6.5) -- (2,7.5)};
\end{tikzpicture}
&
\quad  \quad \begin{tikzpicture}[scale=.7]
\draw[fill=black] \foreach \x in {1,2,...,6} {(\x,1.5) circle (2pt)};
\draw {(.25,1.5) node{$H_{n}$}
(1.5,1.5) node[label=above:$5$]{}
(4.5,1.5) node{$\cdots$}
[-] (1,1.5) -- (4,1.5)
[-] (5,1.5) -- (6,1.5)}; 
\end{tikzpicture}
\end{tabular}
\caption{Coxeter graphs corresponding to the irreducible FC-finite Coxeter groups.}
\label{fig:cfcfg}
\end{figure}


\section{Heaps in type $A_{n}$}\label{Aheap}


Every reduced expression can be associated with a labeled partially ordered set (poset) called a heap. Heaps provide a visual representation of a reduced expression while preserving the relations among the generators. For simplicity, we will first define heaps corresponding to Coxeter groups of type $A_{n}$ and mimic the development found in~\cite{Billey2007},~\cite{Ernst2010a}, and~\cite{Stembridge1996}.  

Let $(W,S)$ be a Coxeter system.  Suppose $\w = s_{x_1} \cdots s_{x_k}$ is a fixed reduced expression for $w \in \WA$.  We define a partial ordering on the indices $\{1, \dots, k\}$ by the transitive closure of the relation $j \lessdot i$ if $i < j$ and $s_{x_i}$ and $s_{x_j}$ do not commute.  In particular, since $\w$ is reduced, $j \lessdot i$ if $i < j$ and $s_{x_i} = s_{x_j}$ by transitivity.  This partial order is referred to as the \emph{heap} of $\w$, where $i$ is labeled by $s_{x_i}$. Each heap corresponds to a commutation class and it follows from~\cite{Stembridge1996} that $w$ is fully commutative if and only if there is a unique heap associated to $w$.

\begin{example}\label{ex:Hasse}
Let $\w=s_{2}s_{1}s_{3}s_{2}s_{4}s_{5}$ be a reduced expression for $w \in \FC(A_{5})$. We see that $\w$ is indexed by $\{1,2,3,4,5,6\}$. For example, $6 \lessdot 5$ since $5<6$ and $s_{4}$ and $s_{5}$ do not commute. The labeled Hasse diagram for the heap poset of $w$ is shown in Figure~\ref{fig:hasse}. 
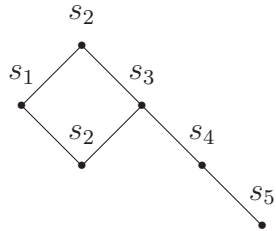
\begin{figure}[h]
\centering
\begin{tikzpicture}[scale=0.8]
\node[scale=0.6, label=above:$s_{2}$] at (1,5.5) {$\bullet$};
\node[scale=0.6, label=above:$s_{1}$] at (0,4.5) {$\bullet$};
\node[scale=0.6, label=above:$s_{3}$] at (2,4.5) {$\bullet$};
\node[scale=0.6, label=above:$s_{2}$] at(1,3.5) {$\bullet$};
\node[scale=0.6, label=above:$s_{4}$] at(3,3.5) {$\bullet$};
\node[scale=0.6, label=above:$s_{5}$] at (4,2.5) {$\bullet$};
\draw (1,5.5)--(0,4.5)--(1,3.5)--(2,4.5)--(3,3.5)--(4,2.5);
\draw (1,5.5)--(2,4.5);
\end{tikzpicture}
\caption{Labeled Hasse diagram for the heap of a fully commutative element.}
\label{fig:hasse}
\end{figure}
\end{example}

Let $\w$ be a fixed reduced expression for $w \in W(A_{n})$.  As in~\cite{Billey2007} and~\cite{Ernst2010a}, we will represent a heap for $\w$ as a set of lattice points embedded in $\{1,2,\ldots,n\} \times \mathbb{N}$.  To do so, we assign coordinates (not unique) $(x,y) \in \{1,2,\ldots, n+1\} \times \mathbb{N}$ to each entry of the labeled Hasse diagram for the heap of $\w$ in such a way that:
\begin{enumerate}
\item An entry with coordinates $(x,y)$ is labeled $s_i$ (or $i$) in the heap if and only if $x = i$; 

\item If an entry with coordinates $(x,y)$ is greater than an entry with coordinates $(x',y')$ in the heap then $y > y'$.
\end{enumerate}

In the case of type $A_{n}$ and other straight line Coxeter graphs, it follows from the definition that $(x,y)$ covers $(x',y')$ in the heap if and only if $x = x' \pm 1$, $y > y'$, and there are no entries $(x'', y'')$ such that $x'' \in \{x, x'\}$ and $y'< y'' < y$.  This implies that we can completely reconstruct the edges of the Hasse diagram and the corresponding heap poset from a lattice point representation. A lattice point representation of a heap allows us to visualize potentially cumbersome arguments.  Note that entries on top of a heap correspond to generators occurring to the left  in the corresponding reduced expression.  

Let $\w$ be a reduced expression for $w \in W(A_{n})$.  We let $H(\w)$ denote a lattice representation of the heap poset in $\{1,2,\ldots,n+1\} \times \N$.  If $w$ is fully commutative, then the choice of reduced expression for $w$ is irrelevant, in which case, we will often write $H(w)$ and we will refer to $H(w)$ as the heap of $w$.  Note that if $w\in \FC(A_n)$, then entries on the top of a heap correspond to $s_i\in \L(w)$.

Given a heap, every generator will have a fixed $x$-co\-or\-di\-nate, yet the $y$-co\-or\-di\-nates may differ.  In particular, two entries labeled by the same generator will possess the same $x$-co\-or\-di\-nate but may differ by the amount of vertical space between them.

Let $\w=s_{x_1}\cdots s_{x_k}$ be a reduced expression for $w \in W(A_{n})$.  If $s_{x_i}$ and $s_{x_j}$ are adjacent generators in the Coxeter graph with $i<j$, then we must place the point labeled by $s_{x_i}$ at a level that is above the level of the point labeled by $s_{x_j}$.  Because generators that are not adjacent in the Coxeter graph do commute, points whose $x$-coordinates differ by more than one can slide past each other or land at the same level.  To emphasize the covering relations of the lattice representation we will enclose each entry of the heap in a $2\times 2$ square in such a way that if one entry covers another, the squares overlap halfway. We will also label the squares with $i$ to represent the generator $s_{i}$.

\begin{example}
Let $\w=s_1s_2s_4$ be a reduced expression for $w\in \FC(A_4)$. Figure~\ref{fig:multirep}  shows two representations for the heap of $w$. 
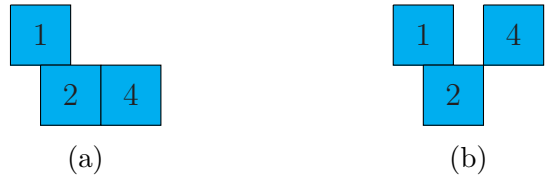
\begin{figure}[h]
\centering
\begin{subfigure}[b]{0.3\textwidth}
\centering
\begin{tikzpicture}[scale=0.8]
\sq{0}{7};
\node at (0.5,6.5) {$1$};
\sq{1.5}{6};
\node at (2,5.5) {$4$};
\sq{0.5}{6};
\node at (1,5.5) {$2$};
\end{tikzpicture}
\caption{}
\end{subfigure}
\begin{subfigure}[b]{0.3\textwidth}
\centering
\begin{tikzpicture}[scale=0.8]
\sq{0}{7};
\node at (0.5,6.5) {$1$};
\sq{1.5}{7};
\node at (2,6.5) {$4$};
\sq{0.5}{6};
\node at (1,5.5) {$2$};
\end{tikzpicture}
\caption{}
\label{canonical}
\end{subfigure}
\caption{Two possible representations for the heap of a fully commutative element.}
\label{fig:multirep}
\end{figure}
\end{example}

\begin{example}\label{multipleheaps}
Let $\w_1=s_1s_3s_2s_1$ be a reduced expression for $w\in W(A_3)$. By applying the short braid relation, $s_{1}s_{3}=s_{3}s_{1}$, we can obtain another reduced expression, $\w_2=s_3s_1s_2s_1$ which is in the same commutation class as $\w_1$, and hence has the same heap. But, by applying the long braid relation, $s_1s_2s_1=s_2s_1s_2$, we can obtain a third reduced expression $\w_3=s_3s_2s_1s_2$, which is in a different commutation class. The representations of $H(\w_1)$, $H(\w_2)$, and $H(\w_3)$ are given in Figure~\ref{fig:multiheap} where we have color-coded the blocks of the heap that correspond to the long braid relation, $s_1s_2s_1=s_2s_1s_2$.
\end{example}

\begin{figure}[h]
\centering
\begin{subfigure}[b]{0.3\textwidth}
\centering
\begin{tikzpicture}[scale=0.8]
\csq{0}{7};
\node at (0.5,6.5) {$1$};
\sq{1}{7};
\node at (1.5,6.5) {$3$};
\csq{0.5}{6};
\node at (1,5.5) {$2$};
\csq{0}{5};
\node at (0.5,4.5) {$1$};
\end{tikzpicture}
\caption{$H(\w_1)$ and $H(\w_2)$}
\label{commclass}
\end{subfigure}
\begin{subfigure}[b]{0.3\textwidth}
\centering
\begin{tikzpicture}[scale=0.8]
\sq{1}{7};
\node at (1.5,6.5) {$3$};
\csq{0.5}{6};
\node at (1,5.5) {$2$};
\csq{0}{5};
\node at (0.5,4.5) {$1$};
\csq{0.5}{4};
\node at (1,3.5) {$2$};
\end{tikzpicture}
\caption{$H(\w_3)$}
\label{commclasstwo}
\end{subfigure}
\caption{Two heaps for a non-fully commutative element.}
\label{fig:multiheap}
\end{figure}
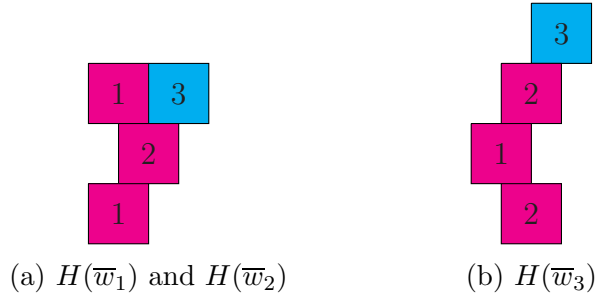

When $w$ is fully commutative, we wish to make a canonical representation of $H(w)$ by giving all entries corresponding to elements in $\L(w)$ the same vertical position and all other entries in the heap should have the highest vertical position possible (as in Figure~\ref{canonical}).

Let $w \in \FC(A_n)$ have reduced expression $\w=s_{x_1}\cdots s_{x_k}$ and suppose $s_{x_i}$ and $s_{x_j}$ equal the same generator $s_m$, so that the corresponding entries have $x$-coordinate $m$ in $H(w)$.  We say that $s_{x_i}$ and $s_{x_j}$ are \emph{consecutive} if there is no other occurrence of $s_{m}$ occurring between them in $\w$.  In this case, $s_{x_i}$ and $s_{x_j}$ are consecutive in $H(w)$, as well.

Let $\w=s_{x_{1}} \cdots s_{x_{k}}$ be a reduced expression for $w \in W(A_{n})$.  We define a heap $H'$ to be a \emph{subheap} of $H(\w)$ if $H'=H(\w')$, where $\w'=s_{y_1}s_{y_2} \cdots s_{y_m}$ is a subexpression (not necessarily a subword) of $\w$.  

A subposet $Q$ of a poset $P$ is called \emph{convex} if $y \in Q$ whenever $x < y < z$ in $P$ and $x, z \in Q$.  We will refer to a subheap as a \emph{convex subheap} if the underlying subposet is convex.  

\begin{example}
Let $\w= s_{2}s_{1} s_{3}s_{2} s_{4}s_{5}$, as in Example~\ref{ex:Hasse}.  Now, let $\w'=s_{2}s_{3}s_{2}$ be the subexpression of $\w$ that results from deleting all but the first, third, and fourth generators of $\w$.  Then $H(\w')$ equals the heap given in Figure~\ref{fig:subheap} and is a subheap of $H(\w)$, but is not convex since there is an entry in $H(\w)$ labeled by $s_{1}$ occurring between the two consecutive occurrences of $s_{2}$ that does not occur in $H(\w')$.  However, if we do include the entry labeled by $s_{1}$, then we obtain the heap in Figure~\ref{fig:heapheap}, which is a convex subheap of $H(\w)$. Note that if we delete the occurrence of the block labeled by $1$ in the original heap, then the heap in Figure~\ref{fig:subheap} is a convex subheap.
\end{example}

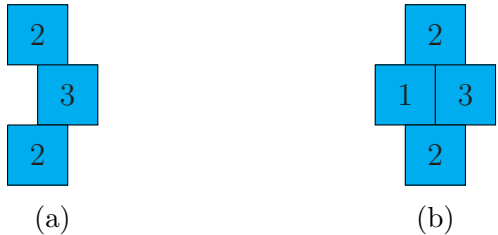
\begin{figure}[!h]
\centering
\begin{subfigure}[b]{0.3\textwidth}
\centering
\begin{tikzpicture}[scale=0.8]
\sq{0.5}{6};
\node at (1,5.5) {$2$};
\sq{1}{5};
\node at (1.5,4.5) {$3$};
\sq{0.5}{4};
\node at(1,3.5) {$2$};
\end{tikzpicture}
\caption{}
\label{fig:subheap}
\end{subfigure}
\begin{subfigure}[b]{0.3\textwidth}
\centering
\begin{tikzpicture}[scale=0.8]
\sq{0.5}{6};
\node at (1,5.5) {$2$};
\sq{0}{5};
\node at (0.5,4.5) {$1$};
\sq{1}{5};
\node at (1.5,4.5) {$3$};
\sq{0.5}{4};
\node at(1,3.5) {$2$};
\end{tikzpicture}
\caption{}
\label{fig:heapheap}
\end{subfigure}
\caption{Subheaps of the heap from Example~\ref{ex:Hasse}.}
\label{fig:144}
\end{figure}

The following fact is implicit in the literature (in particular, see the proof of~\cite[Proposition 3.3]{Stembridge1996}) and follows easily from the definitions.

\begin{proposition}
Let $w \in \FC(W)$.  Then $H'$ is a convex subheap of $H(w)$ if and only if $H'$ is the heap for some subword of some reduced expression for $w$. \qed
\end{proposition}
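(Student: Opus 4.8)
The plan is to prove both directions by relating convexity in the heap poset to the operation of passing to subwords of reduced expressions, using the fact (established earlier via Matsumoto's Theorem and Stembridge's work) that for $w \in \FC(W)$ there is a \emph{unique} heap $H(w)$, and that subheaps of $H(\w)$ correspond exactly to subexpressions of $\w$.

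First I would prove the easier direction: if $H'$ is the heap of some subword $\w'$ of some reduced expression $\w$ for $w$, then $H'$ is convex in $H(w)$. Since $w$ is fully commutative, all reduced expressions for $w$ give the same heap, so $H(\w) = H(w)$. A subword is in particular a subexpression, so $H' = H(\w')$ is a subheap of $H(w)$ by definition. The point is that the subposet of indices picked out by a \emph{consecutive} block $\w' = s_{x_i} s_{x_{i+1}} \cdots s_{x_j}$ is convex: if $p$ is an index with $i \le p \le j$, and if $a < p < b$ in the heap order with $a, b \in \{i, \dots, j\}$, then in particular $a$ and $b$ are among the chosen indices; but any index $c$ lying strictly between $a$ and $b$ in the heap order must satisfy $a < c < b$ as integers as well (by the definition of the heap partial order, which refines the linear order on indices reversed), hence $i \le c \le j$, so $c$ is chosen too. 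Thus $H(\w')$ is a convex subheap. I should be a little careful here about the direction of the order, but this is essentially bookkeeping.

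For the converse — the direction I expect to be the main obstacle — suppose $H'$ is a convex subheap of $H(w)$, say $H' = H(\w'')$ for some subexpression $\w''$ of a fixed reduced expression $\w = s_{x_1} \cdots s_{x_k}$. I need to produce a reduced expression for $w$ in which the generators of $\w''$ appear as a consecutive block. The strategy is to use convexity to show that the index set $I$ underlying $H'$ is an ``order ideal relative to a complementary filter'' in a suitable sense: write $I$ for the chosen indices and let $D$ be the set of indices $j \notin I$ that lie \emph{below} some element of $I$ in the heap, and $U$ the set of $j \notin I$ lying \emph{above} some element of $I$. Convexity of $I$ forces $D$ and $U$ to be disjoint, and more importantly it forces $D \cup I$ to be a down-set (order ideal) of the heap poset: if $j \notin I$ is below some $i \in I$, and $j' < j$, then $j' < i$ too, and $j' \notin U$ since $U$ elements are above $I$, so $j'$ lands in $D$. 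Hence $H(w)$ is partitioned into the down-set $D \cup I$... actually into $D$, then $I$, then $U$ arranged so that everything in $D$ is "below or incomparable from above" — the key claim is that $H(w) = D \sqcup I \sqcup U$ can be linearized so that all of $D$ comes first, then all of $I$, then all of $U$. Any linear extension of the heap poset restricting to this block structure corresponds, via the standard heap-to-reduced-word correspondence, to a reduced expression for $w$ (reduced because $w$ is FC, so every such linear extension gives a reduced expression, as recorded earlier in the excerpt), and in that reduced expression the $I$-generators form a consecutive subword whose heap is exactly $H'$.

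The crux is verifying that such a linear extension exists, i.e.\ that $D \cup I$ is a down-set and $U$ (equivalently $U \cup I$... no — that $I \cup U$ is an up-set, or equivalently $D$ is a down-set and $D \cup I$ is a down-set). This is precisely where convexity is used and where I should be most careful: I must rule out the possibility of an element $j \notin I$ that is simultaneously below one element of $I$ and above another, which would obstruct the linearization — but that is exactly what convexity prohibits, since such a $j$ would satisfy $i_1 < j < i_2$ with $i_1, i_2 \in I$. Once the down-set/up-set structure is in hand, the existence of the linear extension is standard order theory, and the translation back to a reduced expression is immediate from the correspondence between linear extensions of $H(w)$ and reduced expressions for $w$ in the fully commutative case.
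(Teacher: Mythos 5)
The paper offers no proof of this proposition at all: it is stated as ``implicit in the literature'' with a pointer to the proof of Proposition~3.3 in Stembridge's paper, and is closed with a \qed. So there is nothing to compare against except the standard argument, which is essentially what you have reconstructed. Your forward direction is right: a consecutive block of positions is convex because any heap-chain between two of its elements passes only through positions lying between them in the integer order (with the paper's convention $j \lessdot i$ for $i<j$, heap-comparability always forces integer-betweenness along the chain), and for a consecutive index set the intrinsic order on the subexpression agrees with the induced order, so the subword's heap really is the convex subposet. Your converse is also the right argument: convexity is exactly the condition that lets you realize the index set $I$ as a consecutive block in some linear extension of $H(w)$, and since $w$ is fully commutative every linear extension of $H(w)$ yields a reduced expression for $w$, in which the $I$-letters form the desired subword.

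One spot needs tightening. The claimed partition $H(w) = D \sqcup I \sqcup U$ is not correct in general: there may be positions incomparable to every element of $I$, and these lie in none of your three sets. This does not break the argument, but you should not phrase the linearization in terms of that partition. The clean statement is that $D \cup I$ is a down-set (which you prove), that $D$ is itself a down-set (if $x < y$ with $y \in D$ then $x < i$ for some $i \in I$, and $x \in I$ would force $y \in U \cap D = \emptyset$), and that everything else --- $U$ together with the elements incomparable to $I$ --- sits in the complement of the down-set $D \cup I$ and can therefore be listed after $I$ in the linear extension. With that adjustment the proof is complete and correct.
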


The following lemma follows from~\cite[Lemma 2.4.5]{Ernst2010a} and will enable us to recognize when a heap corresponds to a fully commutative element in $W(A_{n})$.  

\begin{lemma}\label{lem:notFCheaps}
Let $w \in \FC(A_{n})$.  Then $H(w)$ cannot contain any of the convex subheaps in Figure~\ref{fig:notFCA}, where $i\in \{1,\ldots ,n-1\}$ and we use \begin{tikzpicture}[scale=0.3] \bsq{0}{0}; \end{tikzpicture} to emphasize that no element of the heap occupies the corresponding position. \qed
\end{lemma}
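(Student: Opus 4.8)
The plan is to treat Lemma~\ref{lem:notFCheaps} as a pictorial reformulation of Stembridge's criterion (Theorem~\ref{stem}), using the Proposition stated just above it to translate between convex subheaps of $H(w)$ and subwords of reduced expressions for $w$. Accordingly, I would argue by contradiction: assume $w \in \FC(A_n)$ and that $H(w)$ contains one of the convex subheaps $H'$ drawn in Figure~\ref{fig:notFCA}.

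First I would apply the preceding Proposition: since $H'$ is a convex subheap of $H(w)$, there is a reduced expression $\w$ for $w$ and a (consecutive) subword $\w'$ of $\w$ with $H(\w') = H'$; note that $\w'$ is itself reduced, being a subword of the reduced word $\w$. Next I would read $\w'$ directly off the picture. Each subheap in Figure~\ref{fig:notFCA} consists of a block labelled $i$ on top, a block labelled by an adjacent generator $j$ (with $|i-j|=1$) half-overlapping it in the middle, a block labelled $i$ again on the bottom, and a dashed block recording that the other neighbour of $s_i$ does not occur between the two $i$-blocks, so those two $i$-blocks are consecutive in $H(w)$. Since entries at the top of a heap correspond to generators occurring on the left of the reduced expression, and since $s_i$ and $s_j$ do not commute (so the commutation class of $\w'$ is a singleton), this forces $\w' = s_i s_j s_i$, a reduced word in which $m(s_i,s_j) = 3$.

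Finally I would invoke Theorem~\ref{stem}: the reduced expression $\w$ for $w$ contains $\w' = \underbrace{s_i s_j s_i \cdots}_{m(s_i,s_j)}$ as a subword with $m(s_i,s_j) = 3 \ge 3$, so $w$ is not fully commutative, contradicting $w \in \FC(A_n)$; the symmetry $s_k \leftrightarrow s_{n+1-k}$ of the type-$A_n$ Coxeter graph accounts for any mirror-image picture in the figure. I do not anticipate a genuine obstacle here: the combinatorial content is carried entirely by Stembridge's theorem and the convex-subheap Proposition, both of which are available, and the only points needing care are bookkeeping ones — verifying that convexity really does preclude any hidden entry between the two occurrences of $s_i$ (if one were present, the missing neighbour of $s_i$ would be forced into that spot by convexity, changing the displayed picture) and observing that $s_i s_j s_i$ is genuinely reduced, being the longest element of the standard parabolic subgroup $\langle s_i, s_j \rangle \cong S_3$. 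Alternatively, one may simply record that Lemma~\ref{lem:notFCheaps} is the specialisation to type $A_n$ of \cite[Lemma 2.4.5]{Ernst2010a}.
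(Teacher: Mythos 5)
Your argument is correct. The paper itself offers no proof of this lemma---it is stated with a reference to \cite[Lemma 2.4.5]{Ernst2010a}---but the route you take (use the convex-subheap/subword Proposition to extract a consecutive subword $s_is_{i+1}s_i$ or $s_{i+1}s_is_{i+1}$ from some reduced expression, then contradict Theorem~\ref{stem}) is exactly the standard argument behind that citation, and your handling of the dashed box (convexity would otherwise force an intervening $s_{i-1}$ or $s_{i+2}$ into the subheap, changing the picture) is the one point that genuinely needs saying.
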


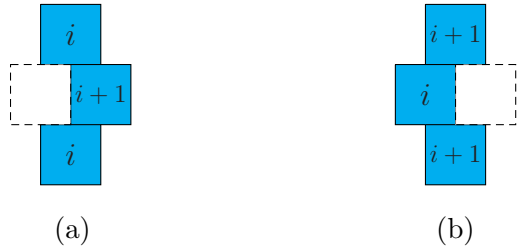
\begin{figure}[!h]
\centering
\begin{subfigure}[b]{0.3\textwidth}
\centering
\begin{tabular}[c]{c}
\begin{tikzpicture}[scale=0.8]
\sq{0.5}{6};
\node at (1,5.5) {$i$};
\sq{1}{5};
\node [scale=0.8] at (1.5,4.5) {$i+1$};
\sq{0.5}{4};
\node at(1,3.5) {$i$};
\bsq{0}{5};
\end{tikzpicture}
\end{tabular}
\caption{}
\label{fig:ii+1i}
\end{subfigure}
\begin{subfigure}[b]{0.3\textwidth}
\centering
\begin{tabular}[c]{c}
\begin{tikzpicture}[scale=0.8]
\sq{1}{7};
\node[scale=0.8] at (1.5,6.5) {$i+1$};
\sq{0.5}{6};
\node at (1,5.5) {$i$};
\sq{1}{5};
\node[scale=0.8] at (1.5,4.5) {$i+1$};
\bsq{1.5}{6};
\end{tikzpicture}
\end{tabular}
\caption{}
\label{fig:i+1ii+1}
\end{subfigure}
\caption{Impermissible convex subheaps for elements in $\FC(A_n)$.}
\label{fig:notFCA}
\end{figure}


\section{Heaps in type $D_{n}$}\label{sec:Dheaps}


For type $D_{n}$, we will represent heaps in a similar fashion, but need to make one modification.  Recall that in type $D_{n}$ (see Figure~\ref{fig:Dgraph}) the generators are $\{s_{\overline{1}},s_{1},\ldots ,s_{n-1}\}$, where $\{s_{1},\ldots ,s_{n-1}\}$ generates a Coxeter group of type $A_{n-1}$, so we just need to modify our heaps to include $s_{\overline{1}}$. Let $\w$ be a fixed reduced expression for $w \in W(D_{n})$. We will represent a heap for $\w$ as a set of lattice points embedded in $\{1,2,\ldots,n\} \times \mathbb{N}\times \{-1,0,1\}$ and assign coordinates (not unique) $(x,y,z) \in \{1,2,\ldots, n+1\} \times \mathbb{N}\times \{-1,0,1\}$ to each entry of the labeled Hasse diagram for the heap of $\w$ in such a way that:

\begin{enumerate}
\item An entry with coordinates $(x,y,0)$ is labeled $s_i$ (or $i$) in the heap if and only if $x=i$ and $i\in \{2,3,\ldots,n-1\}$;
\item  An entry with coordinates $(1,y,-1)$ is labeled $s_{\overline{1}}$ (or $\overline{1}$) in the heap;
\item  An entry with coordinates $(1,y,1)$ is labeled $s_{1}$ (or $1$) in the heap;
\item If an entry with coordinates $(x,y,z)$ is greater than an entry with coordinates $(x',y',z')$ in the heap then $y > y'$.
\end{enumerate}
To emphasize the covering relations of the lattice we will enclose each entry just as we did in Section~\ref{Aheap} but with  $2\times 2\times 2$ cubes instead of $2\times 2$ squares in such a way that if one entry covers another, the cubes overlap.
\begin{example}
Let $\w = s_{\overline{1}}s_{3}s_{2}s_{4}s_{3}s_{5}$ be a reduced expression for $w\in \FC(D_{6})$. Then Figure~\ref{fig:Dheap} is a 3-dimensional representation for $H(w)$.
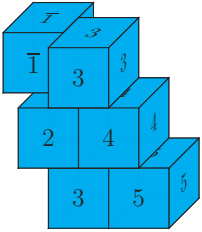
\begin{figure}[h]
\centering
\begin{tikzpicture}
    [x={(-0.5cm,-0.5cm)}, y={(1cm,0cm)}, z={(0cm,1cm)}, 
    scale=0.4,fill opacity=0.97, every node/.append style={thick, transform shape, scale=2}]
\threecube{28.5}{-57};
\fivecube{85.5}{-57};
\twocube{0}{0};
\fourcube{57}{0};
\onebarcube{-14.25}{71.25};
\threecube{28.5}{57};
\end{tikzpicture}
\caption{A heap for a fully commutative element in $W(D_{6})$.}
\label{fig:Dheap}
\end{figure}
\end{example}

The following lemma is analogous to Lemma~\ref{lem:notFCheaps} and will be helpful in recognizing when a heap represents a fully commutative element in $W(D_{n})$. Note that all heaps corresponding to a fully commutative element in $W(D_{n})$ will be referred to as \emph{fully commutative heaps} throughout the rest of this thesis. The following Lemma follows from Remark~\ref{subwords}.

\begin{lemma}\label{lem:notFC3Dheaps}
Let $w \in \FC(D_{n})$.  Then $H(w)$ cannot contain any of the convex subheaps in Figure~\ref{fig:notFCD}, where $i\in \{2,\ldots ,n-2\}$ and we use~
\begin{tikzpicture}
    [x={(-0.5cm,-0.5cm)}, y={(1cm,0cm)}, z={(0cm,1cm)}, 
    scale=0.15,fill opacity=0.97, every node/.append style={thick, transform shape, scale=1.8}]
\blankcube{85.5}{-57};
\end{tikzpicture}
~to emphasize that no element of the heap occupies the corresponding position. \qed
\end{lemma}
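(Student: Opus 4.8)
The plan is to argue by contradiction, reducing the statement to Remark~\ref{subwords}. Suppose $w \in \FC(D_n)$ and that $H(w)$ contains, as a convex subheap, one of the configurations $H'$ pictured in Figure~\ref{fig:notFCD}. By the Proposition relating convex subheaps to subwords, $H' = H(\u)$ for some subword $\u$ of some reduced expression $\w$ for $w$. In particular $\w$ is a reduced expression for $w$ and $\u$ is one of its consecutive subwords.

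I would then read $\u$ off the picture. Each configuration in Figure~\ref{fig:notFCD} consists of exactly three cubes, carrying labels of the shape $s_a,s_b,s_a$ from top to bottom, with $\{s_a,s_b\}$ an edge of the Coxeter graph of type $D_n$: the generic pictures indexed by $i\in\{2,\ldots,n-2\}$, which are the configurations of Figure~\ref{fig:notFCA} regarded inside the $D_n$ heap, together with the analogous pictures at the fork of the diagram for the pairs $\{s_{\overline{1}},s_2\}$ and $\{s_1,s_2\}$. The overlap of consecutive cubes records that consecutive labels are non-commuting, so the three entries form a $3$-element chain in the heap order, with the topmost entry corresponding to the leftmost letter of $\u$; the blank cube(s) record that every lattice position lying strictly between the two occurrences of $s_a$ that could be filled by a neighbour of $s_a$ in the Coxeter graph (other than the middle $s_b$) is empty, which is precisely what makes this three-element set convex in $H(w)$. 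Since a $3$-chain has a unique linear extension, it follows that $\u=s_as_bs_a$.

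Now $\u=s_as_bs_a$ occurs as a consecutive subword of the reduced expression $\w$ for $w$, and $\{s_a,s_b\}$ is an edge of the $D_n$ Coxeter graph, so Remark~\ref{subwords} forces $w\notin\FC(D_n)$, contradicting our assumption. Hence $H(w)$ contains none of the subheaps of Figure~\ref{fig:notFCD}.

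The step that I expect to require the most care is the picture-by-picture verification that the blank cubes really do account for all neighbours of the repeated generator other than the middle one; this is where the adjacencies of the $D_n$ graph matter. For instance, one must use that $s_{\overline{1}}$ commutes with $s_1$ and with $s_3,\ldots,s_{n-1}$, so that the only generator capable of separating two copies of $s_{\overline{1}}$ in a reduced word is $s_2$, and similarly that the neighbours of $s_1$ (respectively of $s_2$) at the fork are exactly those marked blank in the corresponding picture. Beyond this, the argument is a direct appeal to the Proposition and to Remark~\ref{subwords}.
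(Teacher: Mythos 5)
Your proof is correct and takes essentially the same route the paper intends: the paper offers no argument beyond the sentence that the lemma ``follows from Remark~\ref{subwords},'' and you have simply supplied the omitted details, passing through the proposition that identifies convex subheaps of $H(w)$ with heaps of subwords of reduced expressions, reading off the forbidden subword $s_as_bs_a$ from the three-element chain, and invoking Remark~\ref{subwords} for the contradiction. Nothing further is needed.
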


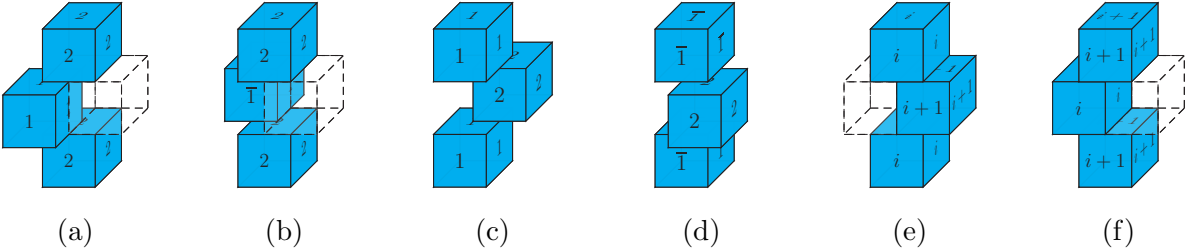
\begin{figure}[!h]
\centering
\begin{subfigure}[b]{0.16\textwidth}
\centering
\begin{tabular}[c]{c}
\begin{tikzpicture}
    [x={(-0.5cm,-0.5cm)}, y={(1cm,0cm)}, z={(0cm,1cm)}, 
    scale=0.35,fill opacity=0.97, every node/.append style={thick, transform shape, scale=1.8}]
\twocube{57}{-114};
\onecube{14.25}{-71.25};
\blankcube{85.5}{-57};
\twocube{57}{0};
\end{tikzpicture}
\end{tabular}
\caption{}
\end{subfigure}
\begin{subfigure}[b]{0.16\textwidth}
\centering
\begin{tabular}[c]{c}
\begin{tikzpicture}
    [x={(-0.5cm,-0.5cm)}, y={(1cm,0cm)}, z={(0cm,1cm)}, 
    scale=0.35,fill opacity=0.97, every node/.append style={thick, transform shape, scale=1.8}]
\twocube{57}{-114};
\onebarcube{42.75}{-42.75};
\blankcube{85.5}{-57};
\twocube{57}{0};
\end{tikzpicture}
\end{tabular}
\caption{}
\end{subfigure}
\begin{subfigure}[b]{0.16\textwidth}
\centering
\begin{tabular}[c]{c}
\begin{tikzpicture}   [x={(-0.5cm,-0.5cm)}, y={(1cm,0cm)}, z={(0cm,1cm)}, 
    scale=0.35,fill opacity=0.97, every node/.append style={thick, transform shape, scale=2}]
\onecube{-42.75}{-71.25};
\twocube{0}{0};
\onecube{-42.75}{42.75};
\end{tikzpicture}
\end{tabular}
\caption{}
\end{subfigure}
\begin{subfigure}[b]{0.16\textwidth}
\centering
\begin{tabular}[c]{c}
\begin{tikzpicture}   [x={(-0.5cm,-0.5cm)}, y={(1cm,0cm)}, z={(0cm,1cm)}, 
    scale=0.35,fill opacity=0.97, every node/.append style={thick, transform shape, scale=2}]
\onebarcube{-14.25}{-42.75};
\twocube{0}{0};
\onebarcube{-14.25}{71.25};
\end{tikzpicture}
\end{tabular}
\caption{}
\end{subfigure}
\begin{subfigure}[b]{0.16\textwidth}
\centering
\begin{tabular}[c]{c}
\begin{tikzpicture}
    [x={(-0.5cm,-0.5cm)}, y={(1cm,0cm)}, z={(0cm,1cm)}, 
    scale=0.35,fill opacity=0.97, every node/.append style={thick, transform shape, scale=1.8}]
\icube{57}{-114};
\blankcube{28.5}{-57};
\ipluscube{85.5}{-57};
\icube{57}{0};
\end{tikzpicture}
\end{tabular}
\caption{}
\end{subfigure}
\begin{subfigure}[b]{0.16\textwidth}
\centering
\begin{tabular}[c]{c}
\begin{tikzpicture}
    [x={(-0.5cm,-0.5cm)}, y={(1cm,0cm)}, z={(0cm,1cm)}, 
    scale=0.35,fill opacity=0.97, every node/.append style={thick, transform shape, scale=1.8}]
\ipluscube{57}{-114};
\blankcube{85.5}{-57};
\icube{28.5}{-57};
\ipluscube{57}{0};
\end{tikzpicture}
\end{tabular}
\caption{}
\end{subfigure}
\caption{Impermissible convex subheaps for elements in $\FC(D_n)$.}
\label{fig:notFCD}
\end{figure}

We say that a fully commutative heap of type $D_{n}$  is of \emph{type I} if the corresponding $w\in \FC(D_{n})$ has $s_{\overline{1}}s_{1}$ as a subword of some reduced expression for $w$. Otherwise, the fully commutative heap is of \emph{type II}.

\begin{example} 
Figure~\ref{fig:typeone} depicts all of the type I heaps for the Coxeter group of type $D_{4}$ and Figure~\ref{fig:typetwo} shows all of the type II heaps for the Coxeter group of type $D_{4}$.
\end{example}

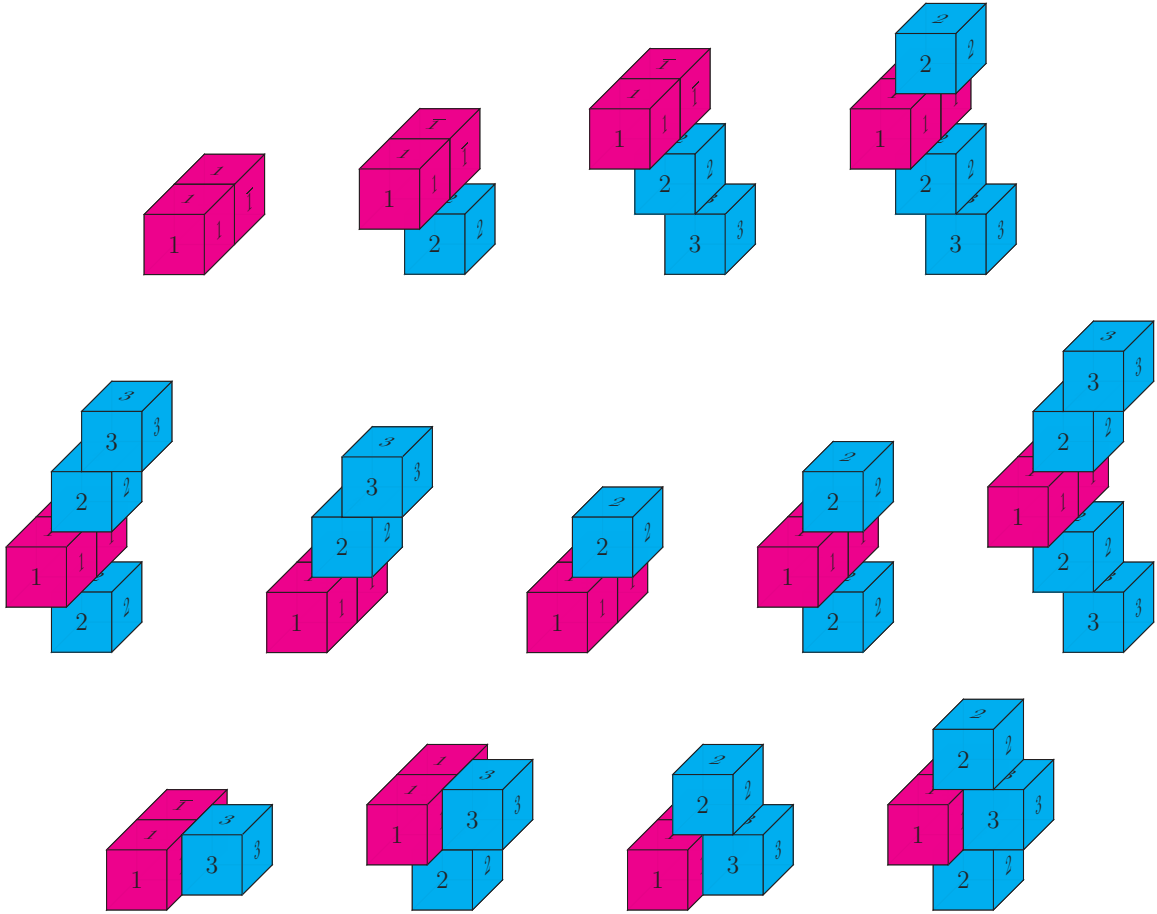
\begin{figure}[ht]
\centering
\begin{tabular}{llll}
\begin{tikzpicture}
    [x={(-0.5cm,-0.5cm)}, y={(1cm,0cm)}, z={(0cm,1cm)}, 
    scale=0.4,fill opacity=0.97, every node/.append style={thick, transform shape, scale=2}]
\coloronebarcube{-14.25}{71.25};
\coloronecube{-42.75}{42.75};
\end{tikzpicture}
&
\quad  \quad 
\begin{tikzpicture}
    [x={(-0.5cm,-0.5cm)}, y={(1cm,0cm)}, z={(0cm,1cm)}, 
    scale=0.4,fill opacity=0.97, every node/.append style={thick, transform shape, scale=2}]
\twocube{0}{0};
\coloronebarcube{-14.25}{71.25};
\coloronecube{-42.75}{42.75};
\end{tikzpicture}
&
\quad  \quad 
\begin{tikzpicture}
    [x={(-0.5cm,-0.5cm)}, y={(1cm,0cm)}, z={(0cm,1cm)}, 
    scale=0.4,fill opacity=0.97, every node/.append style={thick, transform shape, scale=2}]
\threecube{28.5}{-57};
\twocube{0}{0};
\coloronebarcube{-14.25}{71.25};
\coloronecube{-42.75}{42.75};
\end{tikzpicture}
&
\quad  \quad 
\begin{tikzpicture}
    [x={(-0.5cm,-0.5cm)}, y={(1cm,0cm)}, z={(0cm,1cm)}, 
    scale=0.4,fill opacity=0.97, every node/.append style={thick, transform shape, scale=2}]
\threecube{28.5}{-57};
\twocube{0}{0};
\coloronebarcube{-14.25}{71.25};
\coloronecube{-42.75}{42.75};
\twocube{0}{114};
\end{tikzpicture}
\end{tabular}

\bigskip

\begin{tabular}{lllll}
\begin{tikzpicture}
    [x={(-0.5cm,-0.5cm)}, y={(1cm,0cm)}, z={(0cm,1cm)}, 
    scale=0.4,fill opacity=0.97, every node/.append style={thick, transform shape, scale=2}]
\twocube{0}{0};
\coloronebarcube{-14.25}{71.25};
\coloronecube{-42.75}{42.75};
\twocube{0}{114};
\threecube{28.5}{171};
\end{tikzpicture}
&
\quad  \quad 
\begin{tikzpicture}
    [x={(-0.5cm,-0.5cm)}, y={(1cm,0cm)}, z={(0cm,1cm)}, 
    scale=0.4,fill opacity=0.97, every node/.append style={thick, transform shape, scale=2}]
\coloronebarcube{-14.25}{71.25};
\coloronecube{-42.75}{42.75};
\twocube{0}{114};
\threecube{28.5}{171};
\end{tikzpicture}
&
\quad  \quad 
\begin{tikzpicture}
    [x={(-0.5cm,-0.5cm)}, y={(1cm,0cm)}, z={(0cm,1cm)}, 
    scale=0.4,fill opacity=0.97, every node/.append style={thick, transform shape, scale=2}]
\coloronebarcube{-14.25}{71.25};
\coloronecube{-42.75}{42.75};
\twocube{0}{114};
\end{tikzpicture}
&
\quad  \quad 
\begin{tikzpicture}
    [x={(-0.5cm,-0.5cm)}, y={(1cm,0cm)}, z={(0cm,1cm)}, 
    scale=0.4,fill opacity=0.97, every node/.append style={thick, transform shape, scale=2}]
\twocube{0}{0};
\coloronebarcube{-14.25}{71.25};
\coloronecube{-42.75}{42.75};
\twocube{0}{114};
\end{tikzpicture}
&
\quad  \quad 
\begin{tikzpicture}
    [x={(-0.5cm,-0.5cm)}, y={(1cm,0cm)}, z={(0cm,1cm)}, 
    scale=0.4,fill opacity=0.97, every node/.append style={thick, transform shape, scale=2}]
\threecube{28.5}{-57};
\twocube{0}{0};
\coloronebarcube{-14.25}{71.25};
\coloronecube{-42.75}{42.75};
\twocube{0}{114};
\threecube{28.5}{171};
\end{tikzpicture}
\end{tabular}

\bigskip

\begin{tabular}{llll}
\begin{tikzpicture}
    [x={(-0.5cm,-0.5cm)}, y={(1cm,0cm)}, z={(0cm,1cm)}, 
    scale=0.4,fill opacity=0.97, every node/.append style={thick, transform shape, scale=2}]
\coloronebarcube{-14.25}{71.25};
\coloronecube{-42.75}{42.75};
\threecube{28.5}{57};
\end{tikzpicture}
&
\quad  \quad 
\begin{tikzpicture}
    [x={(-0.5cm,-0.5cm)}, y={(1cm,0cm)}, z={(0cm,1cm)}, 
    scale=0.4,fill opacity=0.97, every node/.append style={thick, transform shape, scale=2}]
\twocube{0}{0};
\coloronebarcube{-14.25}{71.25};
\coloronecube{-42.75}{42.75};
\threecube{28.5}{57};
\end{tikzpicture}
&
\quad  \quad 
\begin{tikzpicture}
    [x={(-0.5cm,-0.5cm)}, y={(1cm,0cm)}, z={(0cm,1cm)}, 
    scale=0.4,fill opacity=0.97, every node/.append style={thick, transform shape, scale=2}]
\coloronebarcube{-14.25}{71.25};
\coloronecube{-42.75}{42.75};
\threecube{28.5}{57};
\twocube{0}{114};
\end{tikzpicture}
&
\quad  \quad 
\begin{tikzpicture}
    [x={(-0.5cm,-0.5cm)}, y={(1cm,0cm)}, z={(0cm,1cm)}, 
    scale=0.4,fill opacity=0.97, every node/.append style={thick, transform shape, scale=2}]
\twocube{0}{0};
\coloronebarcube{-14.25}{71.25};
\coloronecube{-42.75}{42.75};
\threecube{28.5}{57};
\twocube{0}{114};
\end{tikzpicture}
\end{tabular}
\caption{Type I heaps for the Coxeter group of type $D_{4}$.}
\label{fig:typeone}
\end{figure}

\begin{figure}[!ht]
\centering
\begin{tabular}{lllll}
\begin{tikzpicture}
    [x={(-0.5cm,-0.5cm)}, y={(1cm,0cm)}, z={(0cm,1cm)}, 
    scale=0.3,fill opacity=0.97, every node/.append style={thick, transform shape, scale=2}]
\blankcube{0}{0};
\end{tikzpicture}
&
\quad  \quad 
\begin{tikzpicture}
    [x={(-0.5cm,-0.5cm)}, y={(1cm,0cm)}, z={(0cm,1cm)}, 
    scale=0.3,fill opacity=0.97, every node/.append style={thick, transform shape, scale=2}]
\onecube{-42.75}{-71.75};
\end{tikzpicture}
&
\quad  \quad 
\begin{tikzpicture}
    [x={(-0.5cm,-0.5cm)}, y={(1cm,0cm)}, z={(0cm,1cm)}, 
    scale=0.3,fill opacity=0.97, every node/.append style={thick, transform shape, scale=2}]
\onebarcube{-14.25}{-42.75};
\end{tikzpicture}
&
\quad  \quad 
\begin{tikzpicture}
    [x={(-0.5cm,-0.5cm)}, y={(1cm,0cm)}, z={(0cm,1cm)}, 
    scale=0.3,fill opacity=0.97, every node/.append style={thick, transform shape, scale=2}]
\twocube{0}{0};
\end{tikzpicture}
&
\quad  \quad 
\begin{tikzpicture}
    [x={(-0.5cm,-0.5cm)}, y={(1cm,0cm)}, z={(0cm,1cm)}, 
    scale=0.3,fill opacity=0.97, every node/.append style={thick, transform shape, scale=2}]
\threecube{28.5}{57};
\end{tikzpicture}
\\
\\

\begin{tikzpicture}
    [x={(-0.5cm,-0.5cm)}, y={(1cm,0cm)}, z={(0cm,1cm)}, 
    scale=0.3,fill opacity=0.97, every node/.append style={thick, transform shape, scale=2}]
\twocube{0}{0};
\onecube{-42.75}{42.75};
\end{tikzpicture}
&
\quad  \quad 
\begin{tikzpicture}
    [x={(-0.5cm,-0.5cm)}, y={(1cm,0cm)}, z={(0cm,1cm)}, 
    scale=0.3,fill opacity=0.97, every node/.append style={thick, transform shape, scale=2}]
\twocube{0}{0};
\onebarcube{-14.25}{71.25};
\end{tikzpicture}
&
\quad  \quad
\begin{tikzpicture}
    [x={(-0.5cm,-0.5cm)}, y={(1cm,0cm)}, z={(0cm,1cm)}, 
    scale=0.3,fill opacity=0.97, every node/.append style={thick, transform shape, scale=2}]
\onecube{-42.75}{42.75};
\twocube{0}{114};
\end{tikzpicture}
&
\quad  \quad 
\begin{tikzpicture}
    [x={(-0.5cm,-0.5cm)}, y={(1cm,0cm)}, z={(0cm,1cm)}, 
    scale=0.3,fill opacity=0.97, every node/.append style={thick, transform shape, scale=2}]
\onebarcube{-14.25}{71.25};
\twocube{0}{114};
\end{tikzpicture}
&
\quad  \quad
\begin{tikzpicture}
    [x={(-0.5cm,-0.5cm)}, y={(1cm,0cm)}, z={(0cm,1cm)}, 
    scale=0.3,fill opacity=0.97, every node/.append style={thick, transform shape, scale=2}]
\onecube{-42.75}{42.75};
\threecube{28.5}{57};
\end{tikzpicture}
\\
\\

\begin{tikzpicture}
    [x={(-0.5cm,-0.5cm)}, y={(1cm,0cm)}, z={(0cm,1cm)}, 
    scale=0.3,fill opacity=0.97, every node/.append style={thick, transform shape, scale=2}]
\onebarcube{-14.25}{71.25};
\threecube{28.5}{57};
\end{tikzpicture}

&
\quad  \quad 
\begin{tikzpicture}
    [x={(-0.5cm,-0.5cm)}, y={(1cm,0cm)}, z={(0cm,1cm)}, 
    scale=0.3,fill opacity=0.97, every node/.append style={thick, transform shape, scale=2}]
\threecube{28.5}{57};
\twocube{0}{114};
\end{tikzpicture}

&
\quad  \quad 
\begin{tikzpicture}
    [x={(-0.5cm,-0.5cm)}, y={(1cm,0cm)}, z={(0cm,1cm)}, 
    scale=0.3,fill opacity=0.97, every node/.append style={thick, transform shape, scale=2}]
\twocube{0}{0};
\threecube{28.5}{57};
\end{tikzpicture}
&
\quad  \quad 
\begin{tikzpicture}
    [x={(-0.5cm,-0.5cm)}, y={(1cm,0cm)}, z={(0cm,1cm)}, 
    scale=0.3,fill opacity=0.97, every node/.append style={thick, transform shape, scale=2}]
\onecube{-42.75}{42.75};
\twocube{0}{114};
\onebarcube{-14.25}{185.25};
\end{tikzpicture}
&
\quad  \quad 
\begin{tikzpicture}
    [x={(-0.5cm,-0.5cm)}, y={(1cm,0cm)}, z={(0cm,1cm)}, 
    scale=0.3,fill opacity=0.97, every node/.append style={thick, transform shape, scale=2}]
\onebarcube{-14.25}{71.25};
\twocube{0}{114};
\onecube{-42.75}{156.75};
\end{tikzpicture}
\\
\\

\begin{tikzpicture}
    [x={(-0.5cm,-0.5cm)}, y={(1cm,0cm)}, z={(0cm,1cm)}, 
    scale=0.3,fill opacity=0.97, every node/.append style={thick, transform shape, scale=2}]
\threecube{28.5}{57};
\twocube{0}{114};
\onecube{-42.75}{156.75};
\end{tikzpicture}
&
\quad  \quad
\begin{tikzpicture}
    [x={(-0.5cm,-0.5cm)}, y={(1cm,0cm)}, z={(0cm,1cm)}, 
    scale=0.3,fill opacity=0.97, every node/.append style={thick, transform shape, scale=2}]
\threecube{28.5}{57};
\twocube{0}{114};
\onebarcube{-14.25}{185.25};
\end{tikzpicture}
&
\quad  \quad
\begin{tikzpicture}
    [x={(-0.5cm,-0.5cm)}, y={(1cm,0cm)}, z={(0cm,1cm)}, 
    scale=0.3,fill opacity=0.97, every node/.append style={thick, transform shape, scale=2}]
\onecube{-42.75}{-71.75};
\twocube{0}{0};
\threecube{28.5}{57};
\end{tikzpicture}
&
\quad  \quad
\begin{tikzpicture}
    [x={(-0.5cm,-0.5cm)}, y={(1cm,0cm)}, z={(0cm,1cm)}, 
    scale=0.3,fill opacity=0.97, every node/.append style={thick, transform shape, scale=2}]
\onebarcube{-14.25}{-42.75};
\twocube{0}{0};
\threecube{28.5}{57};
\end{tikzpicture}
&
\quad  \quad
\begin{tikzpicture}
    [x={(-0.5cm,-0.5cm)}, y={(1cm,0cm)}, z={(0cm,1cm)}, 
    scale=0.3,fill opacity=0.97, every node/.append style={thick, transform shape, scale=2}]
\twocube{0}{0};
\onecube{-42.75}{42.75};
\threecube{28.5}{57};
\end{tikzpicture}
\\
\\

\begin{tikzpicture}
    [x={(-0.5cm,-0.5cm)}, y={(1cm,0cm)}, z={(0cm,1cm)}, 
    scale=0.3,fill opacity=0.97, every node/.append style={thick, transform shape, scale=2}]
\onecube{-42.75}{42.75};
\threecube{28.5}{57};
\twocube{0}{114};
\end{tikzpicture}
&
\quad  \quad
\begin{tikzpicture}
    [x={(-0.5cm,-0.5cm)}, y={(1cm,0cm)}, z={(0cm,1cm)}, 
    scale=0.3,fill opacity=0.97, every node/.append style={thick, transform shape, scale=2}]
\twocube{0}{0};
\onebarcube{-14.25}{71.25};
\threecube{28.5}{57};
\end{tikzpicture}
&
\quad  \quad
\begin{tikzpicture}
    [x={(-0.5cm,-0.5cm)}, y={(1cm,0cm)}, z={(0cm,1cm)}, 
    scale=0.3,fill opacity=0.97, every node/.append style={thick, transform shape, scale=2}]
\onebarcube{-14.25}{71.25};
\threecube{28.5}{57};
\twocube{0}{114};
\end{tikzpicture}
&
\quad  \quad
\begin{tikzpicture}
    [x={(-0.5cm,-0.5cm)}, y={(1cm,0cm)}, z={(0cm,1cm)}, 
    scale=0.3,fill opacity=0.97, every node/.append style={thick, transform shape, scale=2}]
\twocube{0}{0};
\onecube{-42.75}{42.75};
\threecube{28.5}{57};
\twocube{0}{114};
\end{tikzpicture}
&
\quad  \quad
\begin{tikzpicture}
    [x={(-0.5cm,-0.5cm)}, y={(1cm,0cm)}, z={(0cm,1cm)}, 
    scale=0.3,fill opacity=0.97, every node/.append style={thick, transform shape, scale=2}]
\twocube{0}{0};
\onebarcube{-14.25}{71.25};
\threecube{28.5}{57};
\twocube{0}{114};
\end{tikzpicture}
\\
\\

\begin{tikzpicture}
    [x={(-0.5cm,-0.5cm)}, y={(1cm,0cm)}, z={(0cm,1cm)}, 
    scale=0.3,fill opacity=0.97, every node/.append style={thick, transform shape, scale=2}]
\onebarcube{-14.25}{-42.75};
\twocube{0}{0};
\onecube{-42.75}{42.75};
\threecube{28.5}{57};
\end{tikzpicture}
&
\quad  \quad
\begin{tikzpicture}
    [x={(-0.5cm,-0.5cm)}, y={(1cm,0cm)}, z={(0cm,1cm)}, 
    scale=0.3,fill opacity=0.97, every node/.append style={thick, transform shape, scale=2}]
\onecube{-42.75}{-71.75};
\twocube{0}{0};
\onebarcube{-14.25}{71.25};
\threecube{28.5}{57};
\end{tikzpicture}
&
\quad  \quad
\begin{tikzpicture}
    [x={(-0.5cm,-0.5cm)}, y={(1cm,0cm)}, z={(0cm,1cm)}, 
    scale=0.3,fill opacity=0.97, every node/.append style={thick, transform shape, scale=2}]
\onecube{-42.75}{42.75};
\threecube{28.5}{57};
\twocube{0}{114};
\onebarcube{-14.25}{185.25};
\end{tikzpicture}
&
\quad  \quad
\begin{tikzpicture}
    [x={(-0.5cm,-0.5cm)}, y={(1cm,0cm)}, z={(0cm,1cm)}, 
    scale=0.3,fill opacity=0.97, every node/.append style={thick, transform shape, scale=2}]
\onebarcube{-14.25}{71.25};
\threecube{28.5}{57};
\twocube{0}{114};
\onecube{-42.75}{156.75};
\end{tikzpicture}
&
\quad  \quad
\begin{tikzpicture}
    [x={(-0.5cm,-0.5cm)}, y={(1cm,0cm)}, z={(0cm,1cm)}, 
    scale=0.3,fill opacity=0.97, every node/.append style={thick, transform shape, scale=2}]
\twocube{0}{0};
\onecube{-42.75}{42.75};
\threecube{28.5}{57};
\twocube{0}{114};
\onebarcube{-14.25}{185.25};
\end{tikzpicture}
\\
\\

\begin{tikzpicture}
    [x={(-0.5cm,-0.5cm)}, y={(1cm,0cm)}, z={(0cm,1cm)}, 
    scale=0.3,fill opacity=0.97, every node/.append style={thick, transform shape, scale=2}]
\twocube{0}{0};
\onebarcube{-14.25}{71.25};
\threecube{28.5}{57};
\twocube{0}{114};
\onecube{-42.75}{156.75};
\end{tikzpicture}
&
\quad  \quad
\begin{tikzpicture}
    [x={(-0.5cm,-0.5cm)}, y={(1cm,0cm)}, z={(0cm,1cm)}, 
    scale=0.3,fill opacity=0.97, every node/.append style={thick, transform shape, scale=2}]
\onebarcube{-14.25}{-42.75};
\twocube{0}{0};
\onecube{-42.75}{42.75};
\threecube{28.5}{57};
\twocube{0}{114};
\end{tikzpicture}
&
\quad  \quad
\begin{tikzpicture}
    [x={(-0.5cm,-0.5cm)}, y={(1cm,0cm)}, z={(0cm,1cm)}, 
    scale=0.3,fill opacity=0.97, every node/.append style={thick, transform shape, scale=2}]
\onecube{-42.75}{-71.75};
\twocube{0}{0};
\onebarcube{-14.25}{71.25};
\threecube{28.5}{57};
\twocube{0}{114};
\end{tikzpicture}
&
\quad  \quad
\begin{tikzpicture}
    [x={(-0.5cm,-0.5cm)}, y={(1cm,0cm)}, z={(0cm,1cm)}, 
    scale=0.3,fill opacity=0.97, every node/.append style={thick, transform shape, scale=2}]
\onecube{-42.75}{-71.75};
\twocube{0}{0};
\onebarcube{-14.25}{71.25};
\threecube{28.5}{57};
\twocube{0}{114};
\onecube{-42.75}{156.75};
\end{tikzpicture}
&
\quad  \quad
\begin{tikzpicture}
    [x={(-0.5cm,-0.5cm)}, y={(1cm,0cm)}, z={(0cm,1cm)}, 
    scale=0.3,fill opacity=0.97, every node/.append style={thick, transform shape, scale=2}]
\onebarcube{-14.25}{-42.75};
\twocube{0}{0};
\onecube{-42.75}{42.75};
\threecube{28.5}{57};
\twocube{0}{114};
\onebarcube{-14.25}{185.25};
\end{tikzpicture}
\end{tabular}
\caption{Type II heaps for the Coxeter group of type $D_{4}$.}
\label{fig:typetwo}
\end{figure}
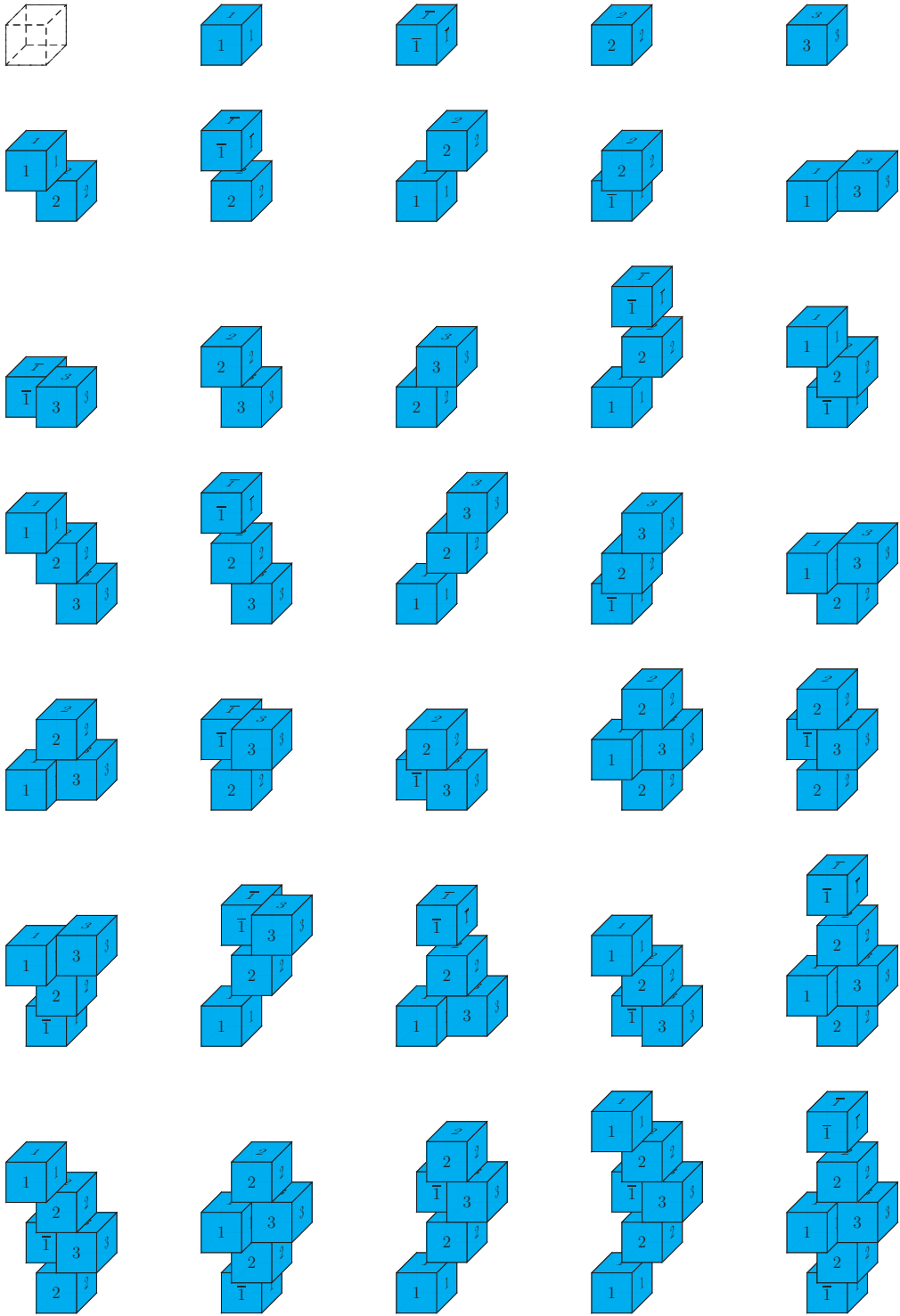


\section{Hecke algebras}\label{hecke}


Let $(W,S)$ be an arbitrary Coxeter system associated to the Coxeter graph $\Gamma$. We define the $\Z[q,q^{-1}]$-algebra, $\H_q(\Gamma)$, with basis consisting of elements $T_w$, for all $w\in W$, satisfying
\[
T_{s}T_{w}:=\begin{cases}
T_{sw} & \text{if }\ell\left(sw\right)>\ell\left(w\right),\\
qT_{sw}+\left(q-1\right)T_{w} & \text{otherwise}
\end{cases}
\]
where $s\in S$ and $w\in W$. If $\w=s_{x_1}s_{x_2}\cdots s_{x_k}$ is a reduced expression for $w\in W$, then $T_{w}=T_{s_{x_1}}T_{s_{x_2}}\cdots T_{s_{x_k}}$. We will abbreviate $T_{w}$ with $T_{x_1x_2\cdots x_k}$. In particular, $T_{s_i}$ will be written as $T_i$.

It is convenient to extend the scalars of $\H_q(\Gamma)$ to produce an $\A$-algebra, $\H(\Gamma)=\A \otimes_{\Z[q,q^{-1}]}\H_q(\Gamma)$, where $\A$ is the ring of Laurent polynomials, $\mathbb{Z}\left[v,v^{-1}\right],$ and $v=q^{\frac{1}{2}}$ to obtain the \emph{Hecke algebra} of type $\Gamma$ denoted by $\H(\Gamma)$.

\begin{example}
Let $\w_{1} =s_{1}s_{2}s_{1}s_{3}$ and $\w_{2} = s_{3}s_{2}s_{3}$ be reduced expressions for $w_1,w_2\in W(A_3)$. We wish to calculate $T_{w_2}T_{w_1}$. Observe that each of $s_3s_1s_2s_1s_3$ and $s_2s_3s_1s_2s_1s_3$ are reduced. However, we see that 
\begin{align*}
s_3s_2s_3s_1s_2s_1s_3&=s_2s_3s_2s_1s_2s_1s_3\\
&=s_2s_3s_1s_2s_1s_1s_3\\
&=s_2s_3s_1s_2s_3,
\end{align*} where the last expression is reduced. This implies that
\begin{align*}
T_{w_{2}}T_{w_{1}}&=  T_{{3}{2}{3}}T_{{1}{2}{1}{3}}\\
&=  T_{{3}}T_{{2}}T_{{3}}T_{{1}{2}{1}{3}}\\
&=  T_{{3}}T_{{2}}T_{{3}{1}{2}{1}{3}} \\
&=  T_{{3}}T_{{2}{3}{1}{2}{1}{3}}\\
&=  qT_{{2}{3}{1}{2}{3}}+\left(q-1\right)T_{{2}{3}{1}{2}{1}{3}}.
\end{align*}
\end{example}

The set $\{ T_{w}:w\in W\}$ is is the natural basis for $\H(\Gamma)$ but there is another remarkable basis $\{C'_w:w\in W\}$, where 
\[
C'_s=v^{-1}T_s+v^{-1}T_{e}
\]
for each $s\in S$. The following theorem defines the basis element $C'_w\in \H(\Gamma)$. 

\begin{theorem}[Kazhdan, Lusztig~\cite{Kazhdan1979}]
There is a unique element $C'_{w}\in \H$ such that 
\[
C'_{w}=\sum_{x\le w}v^{-l(w)}P_{x,w}T_{x},
\]
where $\le$ is the Bruhat ordering on the Coxeter group $W, P_{x,w}\in\Z \left[v^{-1}\right]$ if $x<w$, and $P_{w,w}=1.$ \qed
\end{theorem}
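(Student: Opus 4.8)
The plan is to follow the classical Kazhdan--Lusztig argument, establishing existence and uniqueness of $C'_w$ simultaneously by induction on $\ell(w)$, with the \emph{bar involution} of $\H$ as the central device. First I would introduce the ring automorphism $h \mapsto \overline{h}$ of $\H$ determined by $\overline{v} = v^{-1}$ and $\overline{T_w} = (T_{w^{-1}})^{-1}$, checking that it is well defined (it respects the braid relations, since $T_s^{-1} = v^{-2}T_s + (v^{-2}-1)T_e$ lies in the span of the $T_x$) and is an involution. An easy induction on $\ell(w)$ using a reduced expression $w = su$ then shows $\overline{T_w} \in \bigoplus_{x \le w}\A\,T_x$ with the coefficient of $T_w$ equal to $v^{-2\ell(w)}$; the degrees of the lower coefficients are governed by the $R$-polynomials, and this bookkeeping is what will feed the degree estimates in the existence step.

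For uniqueness, suppose $C$ and $C''$ both satisfy the stated conditions, so that $D := C - C''$ is bar-invariant and $D = \sum_{x<w} a_x T_x$, where, by the degree condition built into the definition of $C'_w$, each $a_x$ lies in $v^{-1}\Z[v^{-1}]$ with $v$-degree at most $-\ell(x)-1$. If $D \ne 0$, choose $x_0$ of maximal length among $\{x : a_x \ne 0\}$. Since $\overline{T_x}$ involves only $T_y$ with $y \le x$, and $a_x = 0$ whenever $\ell(x) > \ell(x_0)$, comparing the coefficients of $T_{x_0}$ on the two sides of $\overline{D} = D$ yields $a_{x_0} = \overline{a_{x_0}}\,v^{-2\ell(x_0)}$. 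The right-hand side has all $v$-powers at least $1 - \ell(x_0)$, whereas $a_{x_0}$ has all $v$-powers at most $-\ell(x_0)-1$; since $-\ell(x_0)-1 < 1-\ell(x_0)$, this forces $a_{x_0} = 0$, a contradiction. Hence $D = 0$.

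For existence I would induct on $\ell(w)$, with $C'_e := T_e$ as the base case. For $\ell(w) \ge 1$, pick $s \in \L(w)$ and set $w' := sw$, so $\ell(w') = \ell(w) - 1$ and $C'_{w'}$, together with $C'_z$ for every $z$ with $\ell(z) < \ell(w)$, is already constructed. Using the identities $C'_s T_x = v^{-1}(T_{sx} + T_x)$ when $\ell(sx) > \ell(x)$ and $C'_s T_x = v(T_{sx} + T_x)$ when $\ell(sx) < \ell(x)$, one expands the bar-invariant product $C'_s C'_{w'}$ in the $T$-basis and checks that its $T_w$-coefficient is exactly $v^{-\ell(w)}$. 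One then \emph{defines}
\[
C'_w := C'_s C'_{w'} - \sum_{\substack{z < w' \\ \ell(sz) < \ell(z)}} \mu(z,w')\, C'_z,
\]
where $\mu(z,w') \in \Z$ is the coefficient of $v^{-1}$ in the $T_z$-coefficient of $C'_{w'}$. As an integral combination of bar-invariant elements, $C'_w$ is bar-invariant, and it still has $T_w$-coefficient $v^{-\ell(w)}$, so $P_{w,w} = 1$. One then verifies that the remaining coefficients take the form $v^{-\ell(w)}P_{x,w}$ with $P_{x,w} \in \Z[q]$ of $q$-degree at most $\tfrac12(\ell(w)-\ell(x)-1)$, which amounts to the recursions $P_{x,w} = P_{sx,w}$ when $\ell(sx) > \ell(x)$ and $P_{x,w} = P_{sx,w'} + qP_{x,w'} - \sum_z \mu(z,w')\,q^{(\ell(w)-\ell(z))/2}P_{x,z}$ when $\ell(sx) < \ell(x)$, combined with the inductive degree bounds and the Bruhat-order lifting property. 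Once this holds, the uniqueness already proved shows the outcome is independent of the choice of $s$, completing the induction.

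The step I expect to be the main obstacle is the degree estimate in the existence step: the term $qP_{x,w'}$ has $q$-degree a half-unit beyond the target $\tfrac12(\ell(w)-\ell(x)-1)$, so the construction succeeds only because the offending top-degree contribution is \emph{exactly} cancelled by the $z = x$ (and higher) terms of the $\mu$-correction; pinning down this cancellation, and with it the integrality of the $\mu(z,w')$ and the fact that the $P_{x,w}$ remain polynomials in $q$, is the technical crux. Setting up the bar involution and the $R$-polynomial bookkeeping cleanly is the preliminary groundwork that makes that argument go through.
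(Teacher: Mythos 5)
The paper does not prove this result; it is quoted from Kazhdan--Lusztig with the proof omitted, so there is nothing to compare your argument against except the original. Your sketch is the standard existence-and-uniqueness argument (bar involution, degree estimate for uniqueness, the recursion $C'_w = C'_sC'_{sw} - \sum_z \mu(z,sw)C'_z$ for existence) and is essentially correct, including the degree bookkeeping $a_{x_0} = \overline{a_{x_0}}\,v^{-2\ell(x_0)}$ and the recursion $P_{x,w}=P_{sx,w'}+qP_{x,w'}-\sum_z\mu(z,w')q^{(\ell(w)-\ell(z))/2}P_{x,z}$. Two remarks. First, as literally stated in the paper the theorem has no uniqueness content: any choice of $P_{x,w}$ satisfies it. The missing hypothesis is self-duality $\overline{C'_w}=C'_w$ together with the degree bound $\deg_q P_{x,w}\le\tfrac12(\ell(w)-\ell(x)-1)$ for $x<w$; you correctly reinstate both, and your proof is a proof of the corrected statement, not the printed one. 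Second, $\mu(z,w')$ is not the coefficient of $v^{-1}$ in the $T_z$-coefficient of $C'_{w'}$; by your own degree normalization that coefficient lies in degrees $\le -\ell(z)-1$, so $\mu(z,w')$ is the coefficient of $v^{-\ell(z)-1}$ (equivalently, of $q^{(\ell(w')-\ell(z)-1)/2}$ in $P_{z,w'}$). This is a notational slip rather than a gap, since the recursion you write down afterwards uses the correct $\mu$.
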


The polynomials, $P_{x,w}$, are known as the \emph{Kazhdan--Lusztig polynomials} and the set $\{C'_{w}:w\in W\}$ is known as the \emph{Kazhdan--Lusztig basis} and has multiplication determined by
\[
C^{\prime}_{s}C^{\prime}_{w}:=
\begin{cases}
\left(v+v^{-1}\right)C^{\prime}_{sw}, & \text{if }\ell\left(sw\right)>\ell\left(w\right)\\
C^{\prime}_{sw}+\sum \mu \left(s,w\right) C^{\prime}_{s}, & \text{otherwise }
\end{cases}
\]
where $\mu \left(s,w\right)$ is the leading coefficient of $P_{s,w}$.


\section{Temperley--Lieb algebras}\label{sec:TL}


Let $(W,S)$ be a Coxeter system with graph $\Gamma$. Next, we define a quotient of $\H(\Gamma)$, called the Temperley--Lieb algebra of type $\Gamma$.  

Define $J(\Gamma)$ to be
the two-sided ideal of $\H(\Gamma)$ generated by 
\[
\sum_{w\in\langle s,s'\rangle}T_{w},
\]
where $(s,s')$ runs over all pairs of elements of $S$ with $3\leq m(s,s')<\infty$,
and $\langle s,s'\rangle$ is the (parabolic) subgroup generated by
$s$ and $s'$.

\begin{example}
In type $A_{3}$, 
\[
\langle s_{1},s_{2}\rangle=\{e,s_{1},s_{2},s_{1}s_{2},s_{2}s_{1},s_{1}s_{2}s_{1}\} \text{ and } \langle s_{2},s_{3}\rangle=\{e,s_{2},s_{3},s_{2}s_{3},s_{3}s_{2},s_{2}s_{3}s_{2}\},
\]
so $J(A_3)$ is generated by
\[
\{T_{e}+T_{1}+T_{2}+T_{12}+T_{21}+T_{121}, T_{e}+T_{2}+T_{3}+T_{23}+T_{32}+T_{232}\}.
\]
\end{example}

\begin{definition}
The Temperley--Lieb algebra of type $\Gamma$, $TL(\Gamma)$, is defined to be the quotient algebra $\H(\Gamma)/J(\Gamma)$. 
\end{definition}

\begin{theorem}[Graham~\cite{Graham1995}]\label{t-basis}
Let $t_{w}$ denote the image of $T_{w}$ in the quotient.
Then $\{t_{w}:w\in FC(W)\}$ is a basis for $TL(\Gamma)$, called the t-basis. \qed
\end{theorem}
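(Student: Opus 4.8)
The plan is to prove the two halves separately: that $\{t_w : w\in\FC(W)\}$ spans $\TL(\Gamma)$, and that it is linearly independent.

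\emph{Spanning.} Since $\{T_w : w\in W\}$ is an $\A$-basis of $\H(\Gamma)$, the images $\{t_w : w\in W\}$ span $\TL(\Gamma)$, so it suffices to express each $t_w$ as an $\A$-linear combination of $\{t_{w'} : w'\in\FC(W)\}$. I would induct on $\ell(w)$, the case $w\in\FC(W)$ (in particular $w=e$) being trivial. If $w\notin\FC(W)$, Theorem~\ref{stem} gives a reduced expression $\w$ containing a consecutive subword $\underbrace{sts\cdots}_{m(s,t)}$ with $m:=m(s,t)\ge 3$ (necessarily finite, since an infinite-bond alternating word is always fully commutative). Writing $\w = a\cdot\underbrace{sts\cdots}_{m}\cdot b$ as words, reducedness gives $T_w = T_a T_{w^{st}} T_b$, where $w^{st}$ is the longest element of $\langle s,t\rangle$, and $\ell(w)=\ell(a)+m+\ell(b)$. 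The generator $\sum_{u\in\langle s,t\rangle}T_u$ of $J(\Gamma)$ shows that modulo $J(\Gamma)$ we may replace $T_{w^{st}}$ by $-\sum_{u\in\langle s,t\rangle,\ u\ne w^{st}}T_u$, each term having $\ell(u)\le m-1$. The Hecke multiplication rules then put each $T_a T_u T_b$ in the span of $\{T_y : \ell(y)\le\ell(a)+\ell(u)+\ell(b)\le\ell(w)-1\}$, so $t_w$ lies in the span of $\{t_y : \ell(y)<\ell(w)\}$, and the induction hypothesis finishes this half. In particular $\dim_\A\TL(\Gamma)\le|\FC(W)|$.

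\emph{Independence.} The key is to pass to the Kazhdan--Lusztig basis. Since finite dihedral groups have only trivial Kazhdan--Lusztig polynomials, and the Kazhdan--Lusztig basis of a parabolic subalgebra is compatible with that of $\H(\Gamma)$, one gets $\sum_{u\in\langle s,t\rangle}T_u = v^{\ell(w^{st})}C'_{w^{st}}$ whenever $3\le m(s,t)<\infty$; hence $J(\Gamma)$ is exactly the two-sided ideal generated by the set $\{C'_{w^{st}}\}$. By positivity of the Kazhdan--Lusztig structure constants, a two-sided ideal of $\H(\Gamma)$ generated by Kazhdan--Lusztig basis elements is spanned by a subset of that basis; concretely $J(\Gamma)=\bigoplus_{w\in X}\A C'_w$, where $X=\{w\in W : w\le_{LR}w^{st}\text{ for some }s,t\text{ with }m(s,t)\ge3\}$ and $\le_{LR}$ is the two-sided Kazhdan--Lusztig preorder. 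It then remains to identify $X$ with $W\setminus\FC(W)$. For $W\setminus\FC(W)\subseteq X$: if $w\notin\FC(W)$, then as above $w=a\,w^{st}\,b$ with lengths adding, so $C'_w$ occurs with nonzero coefficient in $C'_a C'_{w^{st}} C'_b\in J(\Gamma)$ (a standard property of Kazhdan--Lusztig products when lengths add), forcing $w\in X$. Granting also $X\subseteq W\setminus\FC(W)$, we obtain $\TL(\Gamma)=\H(\Gamma)/J(\Gamma)$ free over $\A$ with basis $\{C'_w+J(\Gamma) : w\in\FC(W)\}$, so $\dim_\A\TL(\Gamma)=|\FC(W)|$; combined with the spanning set $\{t_w : w\in\FC(W)\}$ of the same cardinality, these $t_w$ form a basis.

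\emph{Main obstacle.} The crux is the inclusion $X\subseteq W\setminus\FC(W)$, i.e.\ that no fully commutative element is $\le_{LR}$ a long-braid longest element $w^{st}$; equivalently, that non-full-commutativity is preserved under the two-sided cell preorder. I would attempt this by showing directly that $\operatorname{span}_\A\{C'_w : w\notin\FC(W)\}$ is already a two-sided ideal of $\H(\Gamma)$ (it then contains every $C'_{w^{st}}$, hence contains $J(\Gamma)$, which gives $X\subseteq W\setminus\FC(W)$). Using the multiplication rule for $C'_sC'_w$, this reduces to the following: if $w\notin\FC(W)$, $s\in S$ and $\ell(sw)>\ell(w)$, then $sw\notin\FC(W)$ — which is immediate, by prepending $s$ to a reduced expression witnessing a long-braid subword — and, moreover, every $z$ with $s\in\L(z)$ and $\mu(z,w)\ne0$ also lies outside $\FC(W)$. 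Pinning down which $z$ can carry a nonzero $\mu(z,w)$ when $w$ is not fully commutative is where the real content of the theorem sits, and is precisely the point at which Stembridge's structural analysis of reduced words (Theorem~\ref{stem} and its refinements, via heaps) or Kazhdan--Lusztig cell theory must be brought to bear.
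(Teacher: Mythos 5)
The paper does not actually prove this theorem --- it is quoted from Graham's thesis with a \qed{} --- so there is no internal proof to compare against; I will assess your argument on its own terms. Your spanning half is correct and standard: the consecutive long-braid subword guaranteed by Theorem~\ref{stem} lets you write $T_w = T_a T_{w^{st}} T_b$ with lengths adding, and replacing $T_{w^{st}}$ by $-\sum_{u \neq w^{st}} T_u$ modulo $J(\Gamma)$ strictly drops the length bound, so induction on $\ell(w)$ goes through.

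The independence half, however, contains a genuine gap, and it sits exactly where you say it does. Your reduction of independence to the claim that $\mathrm{span}_{\A}\{C'_w : w \notin \FC(W)\}$ is a two-sided ideal is not a reduction to something easier: that claim is equivalent (granting positivity) to the assertion that $\FC(W)$ is closed upward under the two-sided Kazhdan--Lusztig preorder, i.e.\ essentially that the fully commutative elements form a union of two-sided cells together with everything above them. This is a substantial theorem in its own right --- it is the subject of separate work (e.g.\ by Fan, Graham--Lehrer, and Shi), its proof in the cases where it is known typically \emph{uses} the t-basis theorem or an explicit cellular/diagrammatic model rather than the other way around, and your route additionally imports Kazhdan--Lusztig positivity, which for a general Coxeter graph $\Gamma$ is the Elias--Williamson theorem. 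In particular, the final step you flag --- controlling which $z$ with $\mu(z,w) \neq 0$ can be fully commutative when $w$ is not --- does not follow from Stembridge's subword criterion by any short argument, and you give no mechanism for it. So the proof is incomplete at its crux, and the crux is roughly as hard as the theorem itself.

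What is missing is an independent \emph{lower} bound on $\dim_{\A} \TL(\Gamma)$. The standard way to obtain it (and, in spirit, Graham's way) is to construct something concrete of dimension $|\FC(W)|$ on which the quotient acts or to which it maps: a cellular structure on the abstractly presented algebra whose cell modules account for all of $\FC(W)$, or, in specific types, a faithful diagram algebra with basis indexed by $\FC(W)$ (this is precisely the role $\DTL(A_n)$ and $\DTL(D_n)$ play later in this thesis). Your spanning argument then pins the dimension from above by $|\FC(W)|$, and equality forces $\{t_w : w \in \FC(W)\}$ to be a basis. I would recommend replacing the cell-theoretic independence argument with a construction of this kind.
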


\begin{theorem}[Green~\cite{Green1999}]\label{c-basis}
Let $c_{w}$ denote the image of $C'_{w}$ in the quotient.
Then $\{c_{w}:w\in FC(W)\}$ is a basis for $TL(\Gamma)$, called the canonical basis. \qed
\end{theorem}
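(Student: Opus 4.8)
The plan is to derive this from Graham's theorem (Theorem~\ref{t-basis}) by a triangularity argument comparing the images in $\TL(\Gamma)$ of the two distinguished bases $\{T_w : w \in W\}$ and $\{C'_w : w \in W\}$ of $\H(\Gamma)$. Write $\A = \Z[v,v^{-1}]$. By the defining expansion of the Kazhdan--Lusztig basis, for every $w \in W$ we have
\[
C'_w = v^{-\ell(w)}T_w + \sum_{x < w} v^{-\ell(w)}P_{x,w}\,T_x ,
\]
an expansion in the $T$-basis that is triangular for the Bruhat order and whose leading coefficient $v^{-\ell(w)}$ is a unit of $\A$.

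The combinatorial fact I would establish first is that $\FC(W)$ is a lower order ideal of $(W,\le)$: if $w \in \FC(W)$ and $x \le w$, then $x \in \FC(W)$. To prove this I would use Stembridge's characterization (Theorem~\ref{stem}): supposing $x \le w$ with $x \notin \FC(W)$, a reduced expression for $x$ occurs as a subexpression of some reduced expression $\w$ for $w$, and one argues that the long-braid subword forced into a reduced expression for $x$ can be transported up to a reduced expression for $w$, contradicting that $w$ is fully commutative. In the types $A_n$ and $D_n$ of interest here this has a clean heap-theoretic formulation: $H(x)$ is a subheap of $H(w)$, so a forbidden configuration of Lemma~\ref{lem:notFCheaps} or Lemma~\ref{lem:notFC3Dheaps} witnessing $x \notin \FC(W)$ can be located inside $H(w)$ as well.

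Granting the order-ideal property, apply the quotient map $\H(\Gamma) \twoheadrightarrow \TL(\Gamma)$, which sends $T_w \mapsto t_w$ and $C'_w \mapsto c_w$. For $w \in \FC(W)$, every index $x \le w$ in the displayed identity lies in $\FC(W)$, so each $t_x$ occurring is a genuine element of Graham's $t$-basis, and the identity descends to
\[
c_w = v^{-\ell(w)}t_w + \sum_{x < w} v^{-\ell(w)}P_{x,w}\,t_x ,
\]
an $\A$-linear combination of $t$-basis elements all of whose indices lie in $\FC(W)$. Choosing any linear extension of the Bruhat order on the (here finite) set $\FC(W)$, the change-of-basis matrix from $\{c_w : w \in \FC(W)\}$ to $\{t_w : w \in \FC(W)\}$ is triangular with diagonal entries $v^{-\ell(w)} \in \A^{\times}$, hence invertible over $\A$. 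Since $\{t_w : w \in \FC(W)\}$ is an $\A$-basis of $\TL(\Gamma)$ by Theorem~\ref{t-basis}, so is $\{c_w : w \in \FC(W)\}$.

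I expect the real obstacle to be the combinatorial lemma that $\FC(W)$ is downward closed in the Bruhat order; the remainder is formal linear algebra over Graham's theorem. An equivalent packaging of the same content is to show that $J(\Gamma) = \mathrm{span}_{\A}\{C'_w : w \notin \FC(W)\}$: the given generator $\sum_{y \in \langle s,s'\rangle} T_y$ of $J(\Gamma)$ is a unit multiple of $C'_{w_0}$, where $w_0$ is the longest element of the dihedral parabolic $\langle s,s'\rangle$ (whose Kazhdan--Lusztig polynomials are all $1$) and satisfies $w_0 \notin \FC(W)$ because its reduced expression $\underbrace{ss's\cdots}_{m(s,s')}$ is itself a long-braid subword; one then checks that $\mathrm{span}_{\A}\{C'_w : w \notin \FC(W)\}$ is a two-sided ideal using the Kazhdan--Lusztig multiplication rule together with the observation that $\ell(sw) > \ell(w)$ and $w \notin \FC(W)$ force $sw \notin \FC(W)$, and finishes by the dimension count $\dim_{\A} J(\Gamma) = |W| - |\FC(W)|$ coming from Theorem~\ref{t-basis}. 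Either way, the crux is a closure property of $\FC(W)$ under the Bruhat order.
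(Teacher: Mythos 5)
The paper offers no proof of this theorem---it is quoted from Green (more precisely Green--Losonczy) with a \qed---so your attempt must stand on its own, and unfortunately its central lemma is false: $\FC(W)$ is \emph{not} a lower order ideal for the Bruhat order. In $W(A_3)\cong S_4$ take $w=s_2s_1s_3s_2$ (the permutation $3412$), which is reduced and fully commutative (its only reduced expressions are $s_2s_1s_3s_2$ and $s_2s_3s_1s_2$, and neither contains a long braid subword); deleting the letter $s_3$ yields the subexpression $s_2s_1s_2$, a reduced expression for $x=s_1s_2s_1$, so $x\le w$, yet $x$ is the longest element of $\langle s_1,s_2\rangle$ and is not fully commutative. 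The flaw in your heap justification is the conflation of subexpressions with subwords: Stembridge's criterion and Lemmas~\ref{lem:notFCheaps} and~\ref{lem:notFC3Dheaps} forbid braid configurations only as \emph{convex} subheaps, i.e.\ as consecutive subwords of reduced expressions, whereas the Bruhat order is governed by arbitrary subexpressions, which can create a braid word that was not consecutive in $\w$---exactly what happens above. Consequently, for $w\in\FC(W)$ the expansion of $C'_w$ genuinely involves $T_x$ with $x\notin\FC(W)$; the images $t_x$ of such terms are not basis elements but unknown combinations of them, and the triangularity of your change-of-basis matrix collapses.

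Your alternative packaging---showing $J(\Gamma)=\mathrm{span}_{\A}\{C'_w: w\notin\FC(W)\}$---is in fact the route the cited reference takes, and the pieces you supply are correct: each generator $\sum_{y\in\langle s,s'\rangle}T_y$ equals $v^{\ell(w_0)}C'_{w_0}$ for the non-fully-commutative longest element $w_0$ of the dihedral parabolic, and $w\notin\FC(W)$ together with $\ell(sw)>\ell(w)$ does force $sw\notin\FC(W)$. But the step you pass over, that this span is a two-sided ideal, is the entire difficulty: in
\[
C'_sC'_w \;=\; C'_{sw}+\sum_{y}\mu(y,w)\,C'_y \qquad (\ell(sw)>\ell(w)),
\]
you control only the leading term, and nothing you have said excludes a fully commutative $y<w$ with $sy<y$ and $\mu(y,w)\neq 0$ contributing a term outside the span. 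Controlling those $\mu$-terms is precisely the content of Green--Losonczy's proof, so this second version is the right strategy with its decisive step missing rather than a proof.
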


\begin{definition}
For each $s_{i}\in S$, define $b_{i}=v^{-1}t_{i}+v^{-1}$. If
$w\in FC(\Gamma)$ has reduced expression $\overline{w}=s_{x_{1}}\cdots s_{x_{m}}$,
define 
\[
b_{w}=b_{x_{1}}\cdots b_{x_{m}}.
\]
The \emph{monomial basis} is then defined as the set  $\left\{ b_{w}:w\in FC(\Gamma)\right\}$.
\end{definition}

\begin{theorem}[Graham~\cite{Graham1995}]
The monomial basis forms a basis for $TL(\Gamma)$. 
\end{theorem}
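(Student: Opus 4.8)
The plan is to compare the monomial family $\{b_w : w \in \FC(\Gamma)\}$ with the $t$-basis $\{t_w : w \in \FC(\Gamma)\}$ of Theorem~\ref{t-basis}: I will show that, ordering both families by a linear extension of the length function $\ell$, the transition from $\{b_w\}$ to $\{t_w\}$ is triangular with diagonal entries $v^{-\ell(w)}$, which are units in $\A$. Since $\{t_w : w \in \FC(\Gamma)\}$ is already a basis, this triangular expansion lets one recover each $t_w$ by back-substitution on $\ell(w)$ (proving spanning), and it rules out a nontrivial $\A$-linear relation among the $b_w$ by inspecting the coefficient of $t_{w_0}$ for a $w_0$ of maximal length occurring with nonzero coefficient (proving independence). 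Hence $\{b_w : w \in \FC(\Gamma)\}$ is a basis.

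First I would check that $b_w$ is independent of the reduced expression chosen for $w$, so the family is well defined. If $s,s' \in S$ commute, then $T_sT_{s'} = T_{ss'} = T_{s's} = T_{s'}T_s$ in $\H(\Gamma)$, hence $t_st_{s'} = t_{s'}t_s$ and so $b_sb_{s'} = b_{s'}b_s$ in $\TL(\Gamma)$. Since $w$ is fully commutative, any two reduced expressions for $w$ differ by a sequence of commutation moves, so the associated products of $b_i$'s agree.

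The core of the argument is the triangularity claim: for every $w \in \FC(\Gamma)$,
\[
b_w \;=\; v^{-\ell(w)}\,t_w \;+\; \sum_{\substack{x \in \FC(\Gamma)\\ \ell(x) < \ell(w)}} a_x\, t_x, \qquad a_x \in \A,
\]
proved by induction on $\ell(w)$, with base case $b_e = 1 = t_e$. For the inductive step, pick $s \in \L(w)$ and write $\w = s\w'$; then $\w'$ is reduced and $w'$ is fully commutative (any forbidden subword in a reduced word for $w'$ would also be a consecutive subword of the reduced word $s\w'$ for $w$, contradicting Theorem~\ref{stem}), and $\ell(sw') = \ell(w')+1$. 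Using $b_s = v^{-1}(t_s+1)$ we get $b_w = b_sb_{w'} = v^{-1}t_sb_{w'} + v^{-1}b_{w'}$; feeding in the inductive expansion of $b_{w'}$, the term $v^{-1}t_s\cdot v^{-\ell(w')}t_{w'}$ yields exactly $v^{-\ell(w)}t_w$ (since $\ell(sw') > \ell(w')$ gives $t_st_{w'} = t_{sw'} = t_w$ by the Hecke rule), while every remaining term is a $t_x$ or $t_st_x$ with $x \in \FC(\Gamma)$ and $\ell(x) < \ell(w')$, hence supported on $t_z$ with $\ell(z) \le \ell(x)+1 < \ell(w)$. To know that, after re-expanding in the $t$-basis indexed by $\FC(\Gamma)$, these leftover terms still only involve fully commutative elements of length strictly below $\ell(w)$, I need a \emph{length-controlled reduction lemma}: for every $u \in W$, $t_u$ lies in the $\A$-span of $\{t_z : z \in \FC(\Gamma),\ \ell(z) \le \ell(u)\}$, using only $z$ with $\ell(z) < \ell(u)$ when $u \notin \FC(\Gamma)$. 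I would prove this by a second induction on $\ell(u)$: if $u \notin \FC(\Gamma)$, Theorem~\ref{stem} provides a reduced expression for $u$ factoring as $u = acb$ with lengths adding, where $c = \underbrace{ss's\cdots}_{m(s,s')}$ is the unique longest element of the dihedral parabolic $\langle s,s'\rangle$ for some pair with $m(s,s') \ge 3$; the relation $\sum_{y\in\langle s,s'\rangle} t_y = 0$ coming from the ideal $J(\Gamma)$ replaces $t_c$ by $-\sum_{\ell(y) < \ell(c)} t_y$, and since $t_at_yt_b$ is supported on $t_z$ with $\ell(z) \le \ell(a)+\ell(y)+\ell(b) < \ell(a)+\ell(c)+\ell(b) = \ell(u)$, the inductive hypothesis applies.

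I expect the main obstacle to be this reduction lemma together with the bookkeeping of the length filtration through repeated Hecke multiplications (ensuring no term of length $\ell(w)$ other than $t_w$ survives); the well-definedness of $b_w$ and the final triangular change-of-basis conclusion are routine once the lemma is available.
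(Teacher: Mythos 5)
The paper does not actually prove this theorem: it is stated as a citation to Graham's thesis with no argument supplied, so there is no in-paper proof to compare against. Your argument is the standard one from the literature and, as far as I can check, it is correct and complete in outline. The two pillars — well-definedness of $b_w$ via full commutativity, and unitriangularity of the transition matrix from $\{b_w\}$ to the $t$-basis with unit diagonal $v^{-\ell(w)}$ — are exactly what is needed, and your length-controlled reduction lemma is the right mechanism for handling the non-fully-commutative elements $sx$ that appear when you multiply by $t_s$: Stembridge's criterion (Theorem~\ref{stem}, with ``subword'' meaning consecutive subexpression as the paper defines it) gives the length-additive factorization $u=acb$ through the longest dihedral element $c$, and the generator $\sum_{y\in\langle s,s'\rangle}t_y=0$ of the quotient lets you trade $t_c$ for strictly shorter terms. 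The one ingredient you assert without proof is that $T_xT_y$ is supported on $T_z$ with $\ell(z)\le\ell(x)+\ell(y)$; this is a routine induction from the defining relations of $\H(\Gamma)$ and is fair to cite, but it is load-bearing for both the reduction lemma and the claim that no stray $t_w$-term contaminates the leading coefficient, so it deserves at least a sentence. With that noted, your spanning-by-back-substitution and independence-by-maximal-length arguments go through, since $v^{-\ell(w)}$ is a unit in $\A=\Z[v,v^{-1}]$ and only finitely many elements of each length exist ($S$ being finite).
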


\begin{remark}
In~\cite{Green1999}, it is shown that in type $D_n$, the canonical basis is equal to the monomial basis.
\end{remark}

Now we will present the \emph{Temperley--Lieb algebra of type $D_{n}$} in terms of generators and relations.

\begin{theorem}[Green~\cite{Green2006a}]\label{def:TL(D)}
The algebra $\TL(D_{n})$ where $n\ge4$  is the unital $\Z[\delta]$-algebra generated by $b_{\overline{1}},b_{1},b_{2},\ldots,b_{n-1}$ with defining relations
\begin{enumerate}
\item $b_{i}^{2}=\delta b_{i}$ for all $i$, where $\delta=v+v^{-1}$;
\item $b_{i}b_{j} = b_{j}b_{i}$ if $s_i$ and $s_j$ are not connected in the graph;
\item $b_{i}b_{j}b_{i} = b_{i}$ if $s_i$ and $s_j$ are connected in the graph.
\end{enumerate}
\end{theorem}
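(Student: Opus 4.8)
The plan is to prove this presentation by the standard two-step argument for algebras with a monomial basis. Write $\mathcal{A}$ for the unital $\Z[\delta]$-algebra generated by abstract symbols $\beta_{\overline{1}},\beta_1,\dots,\beta_{n-1}$ modulo relations (1)--(3). The strategy is: (i) check that the Kazhdan--Lusztig generators $b_{\overline{1}},b_1,\dots,b_{n-1}$ of $\TL(D_n)$ satisfy (1)--(3), producing a surjection $\phi\colon\mathcal{A}\twoheadrightarrow\TL(D_n)$ with $\beta_i\mapsto b_i$; and (ii) show that $\mathcal{A}$ is spanned over $\Z[\delta]$ by at most $|\FC(D_n)|$ elements. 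Since $\{b_w:w\in\FC(D_n)\}$ is a $\Z[\delta]$-basis of $\TL(D_n)$ by Theorem~\ref{c-basis} and the remark identifying the canonical and monomial bases, (i) and (ii) together force $\phi$ to be an isomorphism, because a surjection from a module spanned by $N$ elements onto a free module of rank $N$ is necessarily an isomorphism.

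For (i): relation (1), $b_i^2=\delta b_i$, is a direct computation from $b_i=v^{-1}t_i+v^{-1}$ and the quadratic relation $t_i^2=(v^2-1)t_i+v^2$. Relation (2) is immediate, since $s_i$ and $s_j$ unconnected gives $s_is_j=s_js_i$, hence $t_it_j=t_jt_i$ and $b_ib_j=b_jb_i$. Relation (3), $b_ib_jb_i=b_i$ for $s_i$ and $s_j$ connected, is the only point where the quotient by $J(D_n)$ is used: a short computation in $\H(D_n)$ gives $C'_{s_i}C'_{s_j}C'_{s_i}=C'_{s_is_js_i}+C'_{s_i}$, and $C'_{s_is_js_i}$ lies in $J(D_n)$ (up to a unit it is the generator of $J(D_n)$ attached to the pair $(s_i,s_j)$, the longest element of the dihedral subgroup $\langle s_i,s_j\rangle$), so its image in $\TL(D_n)$ vanishes.

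For (ii): given $w\in\FC(D_n)$ with reduced expression $\w=s_{x_1}\cdots s_{x_m}$, set $B_w:=\beta_{x_1}\cdots\beta_{x_m}\in\mathcal{A}$; relation (2) together with the fact that any two reduced expressions of a fully commutative element differ by short braid moves shows that $B_w$ is independent of $\w$. I would then prove, by induction on $m$, that an arbitrary monomial $\beta_{x_1}\cdots\beta_{x_m}$ is a $\Z[\delta]$-linear combination of the $B_w$. Write $\mathbf{x}=s_{x_1}\cdots s_{x_m}$. If $\mathbf{x}$ is a reduced expression for a fully commutative element, the monomial is already some $B_w$. If $\mathbf{x}$ is reduced but its product lies outside $\FC(D_n)$, then the heap $H(\mathbf{x})$ must contain one of the forbidden convex subheaps of Figure~\ref{fig:notFCD}, each of which displays a pattern $s_as_bs_a$ with $s_a$ and $s_b$ connected; since (as in the proof of the proposition on convex subheaps) such a subheap is the heap of a consecutive subword of some reduced expression commutation-equivalent to $\mathbf{x}$, relation (2) rewrites the monomial so that $\beta_a\beta_b\beta_a$ occurs consecutively, and relation (3) replaces it by $\beta_a$, strictly shortening the word; now apply the inductive hypothesis. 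If $\mathbf{x}$ is not reduced, let $k$ be least with $s_{x_1}\cdots s_{x_k}$ not reduced; then $s_{x_1}\cdots s_{x_{k-1}}$ is reduced with $s_{x_k}\in\R(s_{x_1}\cdots s_{x_{k-1}})$, so if this prefix is fully commutative relation (2) moves $\beta_{x_k}$ to the end of $\beta_{x_1}\cdots\beta_{x_{k-1}}$ and relation (1) collapses $\beta_{x_k}^2=\delta\beta_{x_k}$, while if it is not fully commutative the reduced-but-not-fully-commutative case already rewrites $\beta_{x_1}\cdots\beta_{x_{k-1}}$ as a combination of shorter monomials; either way the word length drops and the induction closes.

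The main obstacle is the reduced-but-not-fully-commutative case, which hinges on the converse of Lemma~\ref{lem:notFC3Dheaps}: \emph{every} heap of a non-fully-commutative element of $W(D_n)$, and not merely one chosen reduced expression of it, contains one of the six configurations in Figure~\ref{fig:notFCD}. Conceptually this follows from Theorem~\ref{stem} and Matsumoto's Theorem, since a long braid move can only be performed on a word that already contains a consecutive $s_as_bs_a$ with $s_a$ and $s_b$ connected, so a commutation class exhibiting none of the forbidden subheaps admits no long braid moves and is therefore the unique class of a fully commutative element. Making this precise in type $D_n$ -- in particular verifying that the list in Figure~\ref{fig:notFCD} is exhaustive, with the care required around the branch vertex $s_{\overline{1}}$ and the triangle $\{s_{\overline{1}},s_1,s_2\}$ -- is where the bulk of the technical work lies.
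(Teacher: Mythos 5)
Your proposal is correct in outline, but it takes a genuinely more complete route than the paper does. The paper's proof of Theorem~\ref{def:TL(D)} is deliberately partial: it carries out only your step (i), verifying by direct computation in the $t$-basis that the elements $b_i=v^{-1}t_i+v^{-1}$ satisfy relations (1)--(3) (its check of relation (3) uses the vanishing of $t_{iji}+t_{ij}+t_{ji}+t_i+t_j+1$ in the quotient, which is just the $t$-basis form of your observation that $C'_{s_is_js_i}$ is a unit multiple of the relevant generator of $J(D_n)$), and then explicitly defers the converse --- that these relations suffice --- to Green's Proposition~2.6. Your step (ii) supplies exactly the half the paper omits: the rewriting induction showing that the abstractly presented algebra is spanned by the elements $B_w$ for $w\in\FC(D_n)$, which together with the freeness of the monomial basis forces the surjection to be an isomorphism (and your closing module-theoretic step is valid over $\Z[\delta]$: a spanning set of size $N$ mapping onto a basis of a free module of rank $N$ over a commutative ring is itself a basis, so the kernel is trivial). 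You have also correctly located the crux: the induction needs that \emph{every} commutation class of a reduced expression for a non-fully-commutative element admits a long braid move, i.e.\ contains a representative with a consecutive $s_as_bs_a$ where $m(s_a,s_b)=3$, and your Matsumoto-connectivity argument --- a class admitting no long braid move would be an isolated vertex in the connected graph of braid moves, hence the unique class of a fully commutative element --- is the standard and correct way to get this, with Figure~\ref{fig:notFCD} indeed exhausting the edges of the $D_n$ graph. In short, where the paper buys brevity by citation, your argument buys self-containedness at the cost of the combinatorial bookkeeping around the branch vertex that you rightly flag as the bulk of the remaining work.
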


\begin{proof}
We will check that the relations hold in type $D_n$, but for the full proof we refer the reader to~\cite[Proposition~2.6]{Green2006a}. Remember that $b_{i}=v^{-1}t_{i}+v^{-1}$.
\begin{enumerate}
\item We see that
\begin{align*}
b_{i}^{2} &= \left(v^{-1}t_{i}+v^{-1}\right)\left(v^{-1}t_{i}+v^{-1}\right)\\
&= v^{-2}\left(t_{i}^{2}+2t_{i}+1\right)\\
&= v^{-2}\left(v^{2}+\left(v^{2}-1\right)t_{i}+2t_{i}+1\right)\\
&= v^{-2}\left(v^{2}+v^{2}t_{i}+t_{i}+1\right)\\
&= v^{-2}t_{i}+v^{-2}+t_{i}+1\\
&= \left(v^{-1}+v\right)\left(v^{-1}t_{i}+v^{-1}\right)\\
&=\delta b_{i},
\end{align*}
since $\delta=v+v^{-1}$.
\item Assume that $m(s_i,s_j)=2$. Note that since $s_is_j=s_js_i, t_{ij}=t_{ji}.$ We see that
\begin{align*}
b_{i}b_{j} &= \left(v^{-1}t_{i}+v^{-1}\right)\left(v^{-1}t_{j}+v^{-1}\right)\\
&= v^{-2}\left(t_{ij}+t_{i}+t_{j}+1\right)\\
&= v^{-2}\left(t_{ji}+t_{i}+t_{j}+1\right)\\
&= \left(v^{-1}t_{j}+v^{-1}\right)\left(v^{-1}t_{i}+v^{-1}\right)\\
&= b_{j}b_{i}.
\end{align*}
\item Assume $m(s_i,s_j)=3$. Then note that $T_{iji}+T_{ij}+T_{ji}+T_i+T_j+1\in J(D_n)$. This implies that $t_{iji}+t_{ji}+t_{ij}+t_{i}+t_{j}+1=0$. We see that
 \begin{align*}
b_{i}b_{j}b_{i} &= \left(v^{-1}t_{i}+v^{-1}\right)\left(v^{-1}t_{j}+v^{-1}\right)\left(v^{-1}t_{i}+v^{-1}\right)\\
&= \left(v^{-2}t_{i}t_{j}+v^{-2}t_{i}+v^{-2}t_{j}+v^{-2}\right)\left(v^{-1}t_{i}+v^{-1}\right)\\
&= v^{-3}\left(t_{i}t_{j}t_{i}+t_{i}^{2}+t_{j}t_{i}+t_{i}+t_{i}t_{j}+t_{i}+t_{j}+1\right)\\
&= v^{-3}\left(v^{2}+\left(v^{2}-1\right)t_{i}+t_{i}+\left(t_{iji}+t_{ji}+t_{ij}+t_{i}+t_{j}+1\right)\right)\\
&= v^{-3}\left(v^{2}+\left(v^{2}-1\right)t_{i}+t_{i}\right)+0\\
&= v^{-3}\left(v^{2}+v^{2}t_{i}\right)\\
&= v^{-1}+v^{-1}t_{i}\\
&= b_{i}.
\end{align*}
\end{enumerate}
\end{proof}

\begin{theorem}[Green~\cite{Green2006a}]
The algebra $\TL(A_{n-1})$ is generated as a unital $\Z[\delta]$-algebra by $\{b_{1}, b_{2}, \dots, b_{n-1}\}$ with the same relations as Theorem~\ref{def:TL(D)}. \qed
\end{theorem}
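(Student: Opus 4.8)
The plan is to mirror the two-step structure of the proof of Theorem~\ref{def:TL(D)}. Write $\Lambda$ for the unital $\Z[\delta]$-algebra abstractly presented by generators $x_1,\dots,x_{n-1}$ subject to the analogues of relations (1)--(3): $x_i^2=\delta x_i$; $x_ix_j=x_jx_i$ when $|i-j|>1$; and $x_ix_jx_i=x_i$ when $|i-j|=1$. First I would exhibit an algebra surjection $\varphi\colon\Lambda\twoheadrightarrow\TL(A_{n-1})$ with $x_i\mapsto b_i$; this requires checking that $b_1,\dots,b_{n-1}$ satisfy these relations inside $\TL(A_{n-1})$, together with the observation that the $b_i$ generate $\TL(A_{n-1})$ as a unital $\Z[\delta]$-algebra (immediate from Graham's monomial basis theorem~\cite{Graham1995}, since every $b_w$ is a product of the $b_i$ and $b_e=1$). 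Then I would prove $\varphi$ injective by comparing $\Z[\delta]$-ranks.

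For the relation check, recall $b_i=v^{-1}t_i+v^{-1}$ and that the Hecke relation in $\H(A_{n-1})$ gives $t_i^2=(v^2-1)t_i+v^2$. Relation (1), $b_i^2=\delta b_i$, then follows from the computation in the proof of Theorem~\ref{def:TL(D)} verbatim, since that computation uses only this quadratic relation on $t_i$. Relation (2), $b_ib_j=b_jb_i$ for $|i-j|>1$, follows at once from $t_{ij}=t_{ji}$, exactly as in type $D$. For relation (3) with $|i-j|=1$: the element $\sum_{w\in\langle s_i,s_j\rangle}T_w=T_e+T_i+T_j+T_{ij}+T_{ji}+T_{iji}$ is one of the generators of $J(A_{n-1})$, so in the quotient $t_{iji}+t_{ji}+t_{ij}+t_i+t_j+1=0$; substituting this into the expansion of $b_ib_jb_i$ and using $t_i^2=(v^2-1)t_i+v^2$ yields $b_ib_jb_i=b_i$, again identically to the type $D$ argument. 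Equivalently, this entire step is the type-$A$ case of~\cite[Proposition~2.6]{Green2006a}.

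For injectivity I would show that $\Lambda$ is spanned over $\Z[\delta]$ by $\{b^\Lambda_w:w\in\FC(A_{n-1})\}$, where for a reduced expression $s_{i_1}\cdots s_{i_k}$ of $w$ we set $b^\Lambda_w=x_{i_1}\cdots x_{i_k}$ (well-defined on $\FC(A_{n-1})$ because any two reduced expressions differ by commutations, which are instances of relation (2)). Starting from an arbitrary word in the $x_i$ with underlying Coxeter element $w_0$: relation (1) absorbs, up to a power of $\delta$, any two equal generators that relations (2) can make adjacent; and whenever $w_0$ is non-reduced or not fully commutative, Theorem~\ref{stem} together with the deletion condition in $W(A_{n-1})$ produces (after suitable commutations) a subword $s_is_{i\pm1}s_i$, which relation (3) shortens by two letters. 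Induction on word length then shows the $b^\Lambda_w$ with $w\in\FC(A_{n-1})$ span $\Lambda$, so $\operatorname{rank}_{\Z[\delta]}\Lambda\le|\FC(A_{n-1})|$. On the other hand, $\TL(A_{n-1})$ is free of rank $|\FC(A_{n-1})|$ over $\Z[\delta]$ by Theorem~\ref{t-basis} (equivalently, by Graham's monomial basis theorem). A surjection between free $\Z[\delta]$-modules of the same finite rank is an isomorphism, so $\varphi$ is an isomorphism and the presentation is complete.

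The relation check is routine, being line-for-line the type $D$ computation. The genuine content, and the step I expect to be the main obstacle, is the spanning claim: one must verify carefully that the rewriting procedure built from relations (1)--(3) terminates and leaves only monomials indexed by fully commutative elements, which is essentially Jones's normal-form argument for $\TL(A_{n-1})$. In the write-up it may be cleanest simply to invoke this from~\cite{Graham1995} or~\cite{Green2006a}, just as the full proof of Theorem~\ref{def:TL(D)} was deferred to~\cite{Green2006a}.
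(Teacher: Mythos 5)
Your proposal is correct, but it does considerably more than the paper, which states this theorem with no proof at all (deferring entirely to Green~\cite{Green2006a}); even for the type-$D$ analogue, Theorem~\ref{def:TL(D)}, the paper only verifies that the relations hold and cites Green for the completeness of the presentation. Your relation check is exactly the paper's type-$D$ computation restricted to the subgraph $\{s_1,\dots,s_{n-1}\}$, so that half matches the paper's approach. The genuinely additional content is your completeness argument, and you have correctly identified where the real work lies: the claim that every word in the $x_i$ rewrites, via relations (1)--(3), to $\delta^k b^\Lambda_w$ with $w\in\FC(A_{n-1})$. Your sketch of that step is sound --- for a reduced-but-non-FC underlying element, Matsumoto's theorem makes the graph of commutation classes under long braid moves connected, so every class of such an element admits a representative exposing $s_is_{i\pm 1}s_i$; for a non-reduced word one instead exposes $s_is_i$ and applies relation (1) --- though writing it out honestly requires the induction you allude to, and deferring it to~\cite{Graham1995} or~\cite{Green2006a} is reasonable and consistent with the paper's own practice. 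One small point of phrasing to tighten: you do not know a priori that $\Lambda$ is free of rank $|\FC(A_{n-1})|$, only that it is spanned by that many elements, so rather than invoking ``a surjection between free modules of equal rank is an isomorphism,'' argue directly that a spanning set of $\Lambda$ is carried by $\varphi$ onto the $\Z[\delta]$-basis $\{b_w : w\in\FC(A_{n-1})\}$ of $\TL(A_{n-1})$ (Theorem~\ref{t-basis}), which forces $\ker\varphi=0$ and shows the spanning set is in fact a basis. With that adjustment the argument is complete.
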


It is known that we can consider $\TL(A_{n-1})$ as a subalgebra of $\TL(D_{n})$ in the obvious way.

\begin{theorem}[Fan~\cite{Fan1997}]
The dimension of $\TL(D_n)$ is 
\[
\left(\frac{n+3}{2}\right)C(n)-1,
\]
where $C(n)$ is the Catalan number defined by
\[
C(n):=\frac{1}{n+1}{2n\choose n}.
\]\qed
\end{theorem}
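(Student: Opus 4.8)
The plan is to convert the dimension count into a count of fully commutative elements and then to carry that count out combinatorially with heaps. \emph{Reduction.} By Theorem~\ref{t-basis} the t-basis $\{t_w : w\in\FC(D_n)\}$ is a basis of $\TL(D_n)$ (one could equally invoke the monomial or canonical basis), so $\dim\TL(D_n)=|\FC(D_n)|$, and it suffices to prove $|\FC(D_n)| = \frac{n+3}{2}C(n)-1$.

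\emph{Stripping off the type-$A$ parabolics.} I would use the classical fact that the fully commutative elements of $W(A_m)\cong S_{m+1}$ are exactly the $321$-avoiding permutations, so $|\FC(A_m)|=C(m+1)$. Full commutativity of $w$ depends only on $\supp(w)$ and is detected inside the standard parabolic $W_{\supp(w)}$; since $\langle s_1,\dots,s_{n-1}\rangle$ and $\langle s_{\overline{1}},s_2,\dots,s_{n-1}\rangle$ are of type $A_{n-1}$ and $\langle s_2,\dots,s_{n-1}\rangle$ is of type $A_{n-2}$, sorting $w\in\FC(D_n)$ according to whether $s_{\overline{1}}$ and $s_1$ occur in $\supp(w)$ and exploiting the graph automorphism that interchanges $s_{\overline{1}}$ and $s_1$ gives
\[
|\FC(D_n)| = 2\,C(n) - C(n-1) + M_n, \qquad M_n := \#\{\,w\in\FC(D_n) : \{s_{\overline{1}},s_1\}\subseteq\supp(w)\,\}.
\]
(Equivalently one may organize the count as $|\FC(D_n)|$ equals the number of type I heaps plus the number of type II heaps, the latter being the quantity of independent interest for the rest of the thesis.)

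\emph{Counting the heaps that use the fork.} For the remaining term $M_n$ I would argue with the heap $H(w)$ of Section~\ref{sec:Dheaps}. Lemma~\ref{lem:notFC3Dheaps} pins down the admissible local behaviour of the three columns $\overline{1}$, $1$, $2$: consecutive entries in the $\overline{1}$-column (and likewise the $1$-column) are separated by exactly one entry of the $2$-column, consecutive entries of the $2$-column admit at most one $\overline{1}$ and at most one $1$ between them, no length-three alternating tower $s_{\ast}s_2s_{\ast}$ is allowed in either orientation, and columns $2,\dots,n-1$ obey in addition the type-$A$ restrictions of Lemma~\ref{lem:notFCheaps}. Cutting $H(w)$ at the lowest and the highest entry lying in the $\overline{1}$- or $1$-column isolates a short ``boundary'' piece on $\{s_{\overline{1}},s_1,s_2\}$ and leaves a fully commutative heap for a type-$A$ system of smaller rank; this yields a recursion expressing $M_n$ (equivalently $|\FC(D_n)|$) in terms of the corresponding quantity in rank $n-1$ together with Catalan numbers. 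Solving the recursion by induction, with the base case $n=4$ read off from Figures~\ref{fig:typeone} and~\ref{fig:typetwo}, gives $M_n=\frac{n-1}{2}C(n)+C(n-1)-1$, hence $|\FC(D_n)|=\frac{n+3}{2}C(n)-1$.

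\emph{Main obstacle.} The reduction and the parabolic bookkeeping are routine; the real work is the enumeration in the last step. Controlling how the $\overline{1}$-, $1$-, and $2$-columns interleave under Lemma~\ref{lem:notFC3Dheaps} is delicate, and in particular one must isolate the single degenerate configuration responsible for the ``$-1$'' in the closed form. A transfer-matrix recursion running over the successive horizontal slices of the heap (its left-descent antichains) is a viable alternative bookkeeping device, but the essential difficulty --- taming the interaction of the two branch generators with $s_2$ --- is the same either way.
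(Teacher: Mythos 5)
This theorem is imported by the paper from Fan's work and stated without proof (the \qed follows the statement immediately), so there is no internal argument to measure you against; the question is whether your proposal stands on its own. Your reduction is sound: $\dim\TL(D_n)=|\FC(D_n)|$ by Theorem~\ref{t-basis}, and the bookkeeping by support is correct. Using $|\FC(A_m)|=C(m+1)$ and inclusion--exclusion over whether $s_{\overline{1}}$ and $s_1$ lie in $\supp(w)$ does give $|\FC(D_n)|=2C(n)-C(n-1)+M_n$, and the target identity is equivalent to $M_n=\frac{n-1}{2}C(n)+C(n-1)-1$ (which checks out at $n=4$: $M_4=25$ and $|\FC(D_4)|=48$, consistent with Figures~\ref{fig:typeone} and~\ref{fig:typetwo}).

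The genuine gap is that this last identity is where the entire content of the theorem lives, and your proposal asserts it rather than proves it. The ``recursion'' is never written down: you do not say what quantity it relates at ranks $n$ and $n-1$, what its inhomogeneous (Catalan) terms are, or why it has the claimed solution. More importantly, the structural claim underlying it --- that cutting $H(w)$ at the lowest and highest entries of the $\overline{1}$- and $1$-columns ``isolates a short boundary piece on $\{s_{\overline{1}},s_1,s_2\}$ and leaves a fully commutative heap for a type-$A$ system of smaller rank'' --- is not justified and is not true in the naive reading: the region of the heap between those extreme occurrences can contain arbitrarily many entries in columns $2,\dots,n-1$ interleaved with the branch columns, so what remains after the cut is not a single type-$A$ heap, and the pieces are not independent of one another. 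Making this work requires a genuine normal form for $\FC(D_n)$ (as in Stembridge's or Fan's enumerations) controlling exactly how the two branch columns alternate with the $s_2$-column, together with an honest count of the resulting families; your own ``main obstacle'' paragraph concedes that this, including locating the single configuration responsible for the $-1$, is the real work and has not been done. As written, the proposal is a plausible plan with correct arithmetic scaffolding, not a proof.
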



\chapter{Diagram algebras}\label{ch:diag}


This chapter provides necessary background on diagram algebras and is modeled after~\cite{Ernst2012}.


\begin{section}{Undecorated diagrams}\label{undec}


First, we discuss undecorated diagrams and their corresponding diagram algebras.

\begin{definition}\label{def:k-box}
Let $k$ be a nonnegative integer.  The \emph{standard $k$-box} is a rectangle with $2k$ points, called \emph{nodes},  labeled as in Figure~\ref{Fig073}.  We will refer to the top of the rectangle as the \emph{north face} and the bottom as the \emph{south face}.
\end{definition}

\begin{figure}[!ht]
\centering
\begin{tikzpicture}
\kbox{0};
\draw \foreach \x in {1,2,3} {(\x,0) node[label=above:${\x}$]{}}; 
\draw \foreach \x in {1,2,3} {(\x,-2) node[label=below:${\x}^{\prime}$]{}};
\draw {(5,0) node[label=above:$k$]{}};
\draw {(5,-2) node[label=below:$k^{\prime}$]{}};
\end{tikzpicture}
\caption{The standard $k$-box.}\label{Fig073}
\end{figure}
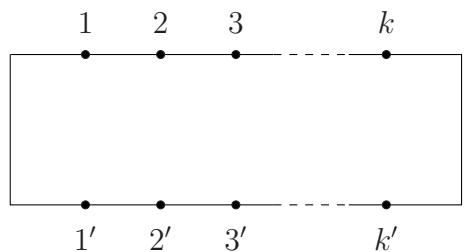

The next definition summarizes the construction of the ordinary Temperley--Lieb pseudo diagrams.  

\begin{definition}\label{def:T_k(emptyset)}
A \emph{concrete pseudo $k$-diagram} consists of a finite number of disjoint curves (planar), called \emph{edges}, embedded in the standard $k$-box with the following restrictions.  
\begin{enumerate}
\item Every node of the box is the endpoint of exactly one edge, which meets the box transversely. 
\item All other edges must be closed (isotopic to circles) and disjoint from the box. 
\end{enumerate}
\end{definition}

\begin{example}\label{ex:pseudo diagram}
The diagram in Figure~\ref{psuedo} is an example of a concrete pseudo 5-diagram, whereas the diagram in Figure~\ref{fig:notp} does not represent a concrete pseudo 5-diagram since the diagram contains edges that are not disjoint (i.e., they intersect), node $4$ is the endpoint for more than one edge, and node $5$ is not an endpoint for any edge.
\end{example}

\begin{figure}[!h]
\centering
\begin{subfigure}[b]{0.4\textwidth}
\centering
\begin{tikzpicture}
\fivebox{0};
\draw (1,0)  arc (-180:0:1.5 and 0.8) ;
\draw (2,0)  arc (-180:0:0.5 and 0.4) ;
\draw (1,-2)  arc (180:0:0.5 and 0.4) ;
\draw (4,-2)  arc (180:0:0.5 and 0.4) ;
\draw (5,0) .. controls (5,-1) and (3,-1) ..  (3,-2);
\lp{-0.5};
\end{tikzpicture}
\caption{A concrete pseudo 5-diagram}
\label{psuedo}
\end{subfigure}
\quad
\begin{subfigure}[b]{0.4\textwidth}
\centering
\begin{tikzpicture}
\fivebox{0};
\draw (2,0)  arc (-180:0:0.5 and 0.4) ;
\draw (2,-2)  arc (180:0:1 and 0.6) ;
\draw (1,0) .. controls (2,-1) and (0,-1) .. (1,-2);
\draw (4,0) .. controls (5,-1) and (4,-1) .. (5,-2);
\draw (4,0) .. controls (5,-1) and (3,-1) .. (3,-2);
\end{tikzpicture}
\caption{Not a concrete pseudo 5-diagram}
\label{fig:notp}
\end{subfigure}
\caption{Examples of diagrams.}
\end{figure}
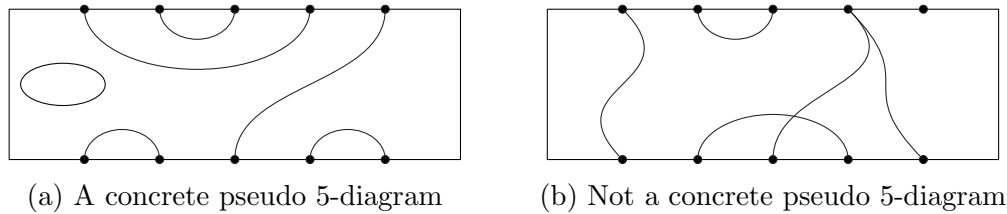

We now define an equivalence relation on the set of concrete pseudo $k$-diagrams. Two concrete pseudo $k$-diagrams are \emph{(isotopically) equivalent} if one concrete diagram can be obtained from the other by isotopically deforming the edges such that any intermediate diagram is also a concrete pseudo $k$-diagram. Note that an isotopy of the k-box is a 1-parameter family of homeomorhisms of the k-box to itself that are stationary on the boundary.

\begin{definition}
A \emph{pseudo $k$-diagram} (or an \emph{ordinary Temperley--Lieb pseudo-diagram}) is defined to be an equivalence class of equivalent concrete pseudo $k$-diagrams.  We denote the set of pseudo $k$-diagrams by $T_{k}(\emptyset)$.
\end{definition}

\begin{remark}\label{vertequiv}
When representing a pseudo $k$-diagram with a drawing, we pick an arbitrary concrete representative among a continuum of equivalent choices. When no confusion can arise, we will not make a distinction between a concrete pseudo $k$-diagram and the equivalence class that it represents. We say that two concrete pseudo $k$-diagrams are \emph{vertically equivalent} if they are equivalent in the above sense by an isotopy that preserves setwise each vertical cross-section of the $k$-box.
\end{remark}

\begin{example}
The concrete pseudo $5$-diagram in Figure~\ref{psuedo} and the concrete pseudo $5$-diagram in Figure~\ref{equivpseudo} are equivalent concrete pseudo $5$-diagrams since the diagram in Figure~\ref{psuedo} can be obtained by isotopically deforming the edges in Figure~\ref{equivpseudo}.
\end{example}

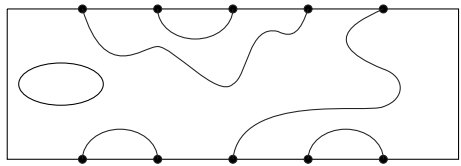
\begin{figure}[!h]
\centering
\begin{tikzpicture}
\fivebox{0};
\draw (1,0)  .. controls (1.3,-1) and (1.8,-0.5)  .. (2,-0.5) .. controls (2.2,-0.5) and (2.8,-1.2) .. (3,-1) .. controls (3.2,-0.9) and (3.2,-0.2) .. (3.6,-0.3) .. controls (3.8,-0.4) and (3.9,-0.3) .. (4,0);
\draw (2,0)  arc (-180:0:0.5 and 0.4) ;
\draw (1,-2)  arc (180:0:0.5 and 0.4) ;
\draw (4,-2)  arc (180:0:0.5 and 0.4) ;
\draw (5,0) .. controls (4.8,-0.1) and (4,-0.4) .. (5,-0.8) .. controls (5.3,-0.9) and (5.3,-1.2) .. (5,-1.3) .. controls (4.8,-1.4) and (3.1,-1.1) .. (3,-2);
\lp{-0.5};
\end{tikzpicture}
\caption{An isotopically equivalent diagram of Figure~\ref{psuedo}.}
\label{equivpseudo}
\end{figure}

Let $d$ be a diagram and let $e$ be an edge of $d$. If $e$ is a closed curve occurring in $d$, then we call $e$ a \emph{loop}.  For example, the diagram in Figure~\ref{psuedo} has a single loop.  If $e$ joins node $i$ in the north face to node $j'$ in the south face, then $e$ is called a \emph{propagating edge from $i$ to $j'$}. If $e$ is not propagating, loop or otherwise, it will be called \emph{non-propagating}.    

Note that we used the word ``pseudo'' in our definition to emphasize that we allow loops to appear in our diagrams. In Section~\ref{decorated}, we will add decorations to our diagrams. The presence of $\emptyset$ in the definition above is to emphasize that the edges of the diagrams are undecorated.

Note that the number of non-propagating edges in the north face of a diagram must be equal to the number of non-propagating edges in the south face.  We define the function $\a: T_{k}(\emptyset) \to \Z^{+}\cup \{0\}$ via
\[
\a(d)=\text{ number of non-propagating edges in the north face of } d
\]
and the function $\mathbf{p}: T_{k}(\emptyset) \to \Z^{+}\cup \{0\}$ via
\[
\mathbf{p}(d)=\text{ number of propagating edges in the north face of } d
\]
where $2\a(d)+\mathbf{p}(d)=k$.
For example, Figure~\ref{psuedo} has two non-propagating edges and one propagating edge in the north face and therefore, $\a(d)=2$ and $\mathbf{p}(d)=1$. There is only one diagram with $\a$-value $0$ having no loops; namely the diagram $d_{e}$ that appears in Figure~\ref{Fig076}.  The maximum value that $\a(d)$ can take is $\lfloor k/2 \rfloor$.  

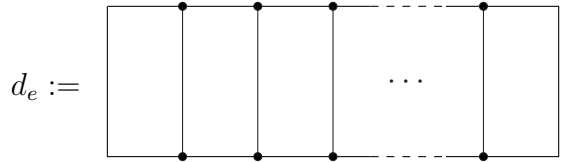
\begin{figure}[!ht]
\centering
$d_{e}:=$
\begin{tabular}[c]{l}
\begin{tikzpicture}
\kbox{0}
\draw (1,0) -- (1,-2);
\draw (2,0) -- (2,-2);
\draw (3,0) -- (3,-2);
\node at (4,-1) {$\cdots$};
\draw (5,0) -- (5,-2);
\end{tikzpicture}
\end{tabular}
\caption{The only diagram having $\a$-value 0 and no loops.}\label{Fig076}
\end{figure}

We wish to define an associative algebra that has the pseudo $k$-diagrams as a basis.

\begin{definition}\label{def:P_k(emptyset)}
Let $R$ be a commutative ring with $1$.  The associative algebra $\P_{k}(\emptyset)$ over $R$ is the free $R$-module having $T_{k}(\emptyset)$ as a basis, with multiplication (referred to as diagram concatenation) defined as follows. We define multiplication in $\P_{k}(\emptyset)$ by defining multiplication in the case where $d$ and $d'$ are basis elements, and then extend bilinearly. If $d, d' \in T_{k}(\emptyset)$, the product $d'd$ is the element of $T_{k}(\emptyset)$ obtained by placing $d'$ on top of $d$, so that node $i'$ of $d'$ coincides with node $i$ of $d$.
\end{definition}

\begin{example}\label{looploop}
Figure~\ref{Fig:looploop} depicts the product of three basis diagrams from $\P_{5}(\emptyset)$.
\end{example}

\begin{figure}[!ht]
\centering
$\begin{tabular}[c]{l}
\begin{tikzpicture}
\fivebox{0};
\draw (1,0)  arc (-180:0:1.5 and 0.8) ;
\draw (2,0)  arc (-180:0:0.5 and 0.4) ;
\draw (1,-2)  arc (180:0:0.5 and 0.4) ;
\draw (4,-2)  arc (180:0:0.5 and 0.4) ;
\draw (5,0) -- (3,-2);
\fivebox{2};
\draw (1,0)  arc (180:0:1.5 and 0.8) ;
\draw (2,0)  arc (180:0:0.5 and 0.4) ;
\draw (1,2)  arc (-180:0:0.5 and 0.4) ;
\draw (4,2)  arc (-180:0:0.5 and 0.4) ;
\draw (5,0) -- (3,2);
\fivebox{4};
\draw (1,4)  arc (-180:0:0.5 and 0.4) ;
\draw (1,2)  arc (180:0:0.5 and 0.4) ;
\draw (3,4) -- (3,2);
\draw (4,4) -- (4,2);
\draw (5,4) -- (5,2);
\end{tikzpicture}
\end{tabular}
=  
\ \begin{tabular}[c]{l}
\begin{tikzpicture}
\fivebox{0};
\draw (0.85,-0.8) arc(-180:0:0.3 and 0.2);
\draw (0.85,-0.8) arc(180:0:0.3 and 0.2);
\draw (1.85,-1.2) arc(-180:0:0.3 and 0.2);
\draw (1.85,-1.2) arc(180:0:0.3 and 0.2);
\draw (1.5,-1.2) arc(-180:0:0.65 and 0.35);
\draw (1.5,-1.2) arc(180:0:0.65 and 0.35);
\draw (1,0)  arc (-180:0:0.5 and 0.4) ;
\draw (1,-2)  arc (180:0:0.5 and 0.4) ;
\draw (4,0)  arc (-180:0:0.5 and 0.4) ;
\draw (4,-2)  arc (180:0:0.5 and 0.4) ;
\draw (3,0) -- (3,-2);
\end{tikzpicture}
\end{tabular}$
\caption{An example of multiplication in $\P_{5}(\emptyset)$.}\label{Fig:looploop}
\end{figure}
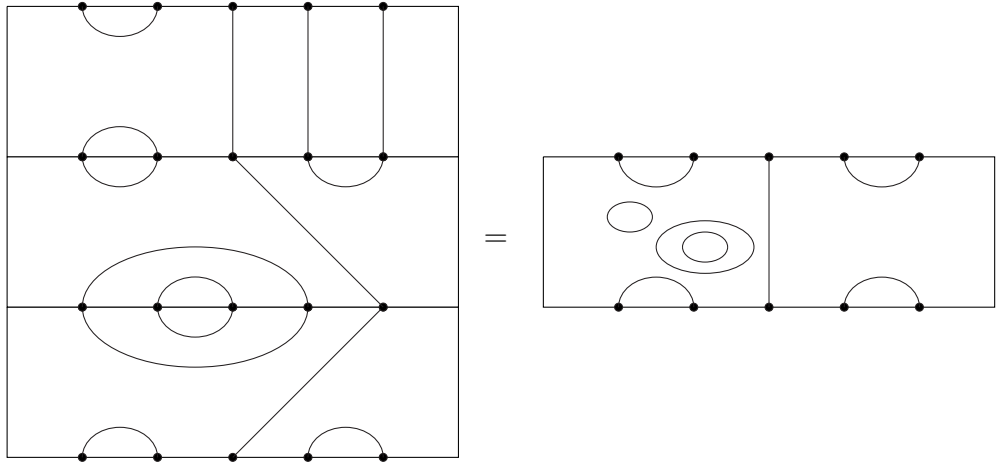

We now restrict our attention to a particular base ring, namely, let $R=\Z[\delta]$, the ring of polynomials in $\delta$ with integer coefficients.
\begin{definition}\label{def:DTL(A_n)}
Let $\DTL(A_{n})$ be the associative $\Z[\delta]$-algebra equal to the quotient of $\P_{n+1}(\emptyset)$ determined by the relation depicted in Figure~\ref{Fig077}.
\end{definition}

\begin{figure}[!ht]
\centering
\begin{tabular}[c]{@{} c@{}}
\begin{tikzpicture}
\lp{0};
\end{tikzpicture}
\end{tabular}
$= \delta$
\caption{The defining relation of $\DTL(A_{n})$.}\label{Fig077}
\end{figure}

It is well-known that $\DTL(A_{n})$ is the free $\Z[\delta]$-module with basis given by the elements of $T_{n+1}(\emptyset)$ having no loops. The multiplication is inherited from the multiplication on $\P_{n+1}(\emptyset)$ except we multiply by a factor of $\delta$ for each resulting loop and then discard the loop.  We will refer to $\DTL(A_{n})$ as the \emph{ordinary Temperley--Lieb diagram algebra}.

\begin{example}
Figure~\ref{Fig078--Fig079} depicts the product of three basis diagrams from $\DTL(A_{4})$. Note that this is the same product of diagrams as in Example~\ref{looploop}, however, in this case the three loops are replaced with the coefficient $\delta^3$.
\end{example}

\begin{figure}[!ht]
\centering
$\begin{tabular}[c]{l}
\begin{tikzpicture}
\fivebox{0};
\draw (1,0)  arc (-180:0:1.5 and 0.8) ;
\draw (2,0)  arc (-180:0:0.5 and 0.4) ;
\draw (1,-2)  arc (180:0:0.5 and 0.4) ;
\draw (4,-2)  arc (180:0:0.5 and 0.4) ;
\draw (5,0) -- (3,-2);
\fivebox{2};
\draw (1,0)  arc (180:0:1.5 and 0.8) ;
\draw (2,0)  arc (180:0:0.5 and 0.4) ;
\draw (1,2)  arc (-180:0:0.5 and 0.4) ;
\draw (4,2)  arc (-180:0:0.5 and 0.4) ;
\draw (5,0) -- (3,2);
\fivebox{4};
\draw (1,4)  arc (-180:0:0.5 and 0.4) ;
\draw (1,2)  arc (180:0:0.5 and 0.4) ;
\draw (3,4) -- (3,2);
\draw (4,4) -- (4,2);
\draw (5,4) -- (5,2);
\end{tikzpicture}
\end{tabular}
= \  \delta^{3} \ \begin{tabular}[c]{l}
\begin{tikzpicture}
\fivebox{0};
\draw (1,0)  arc (-180:0:0.5 and 0.4) ;
\draw (1,-2)  arc (180:0:0.5 and 0.4) ;
\draw (4,0)  arc (-180:0:0.5 and 0.4) ;
\draw (4,-2)  arc (180:0:0.5 and 0.4) ;
\draw (3,0) -- (3,-2);
\end{tikzpicture}
\end{tabular}$
\caption{An example of multiplication in $\DTL(A_{4})$.}\label{Fig078--Fig079}
\end{figure}
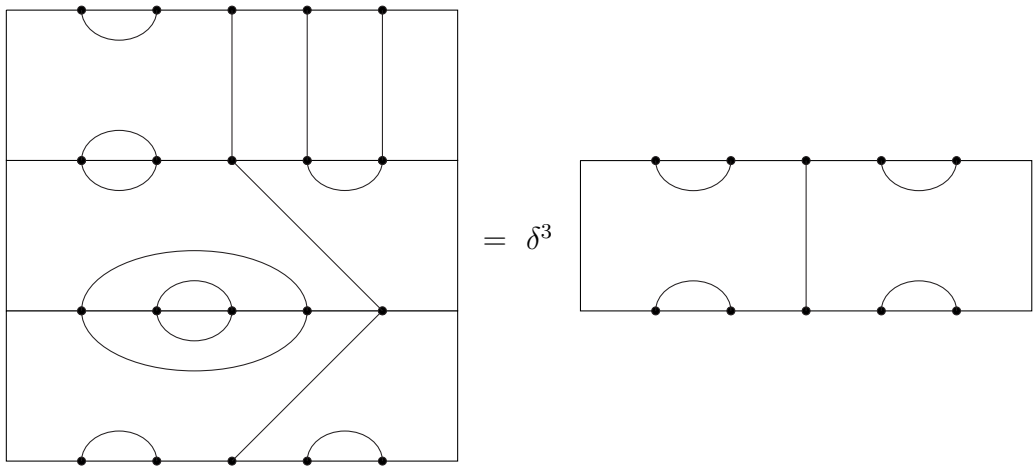

The next theorem describes the connection between $\TL(A_n)$ and $\DTL(A_n)$ shown in~\cite{Kauffman1987} and~\cite{Penrose1971}.

\begin{theorem}[Kaufmann~\cite{Kauffman1987}]\label{kauff}
As $\Z[\delta]$-algebras, the Temperley--Lieb algebra $\TL(A_{n})$ is isomorphic to $\DTL(A_{n})$.  Moreover, each loop-free diagram from $T_{n+1}(\emptyset)$ corresponds to a unique monomial basis element of $\TL(A_{n})$.  \qed
\end{theorem}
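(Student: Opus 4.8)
The plan is to construct an explicit $\Z[\delta]$-algebra homomorphism $\theta\colon \TL(A_n)\to\DTL(A_n)$, prove it is surjective, and then upgrade surjectivity to an isomorphism by a rank count; the ``moreover'' clause will fall out of that argument.

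First I would define $\theta$ on the monomial generators. For $i\in\{1,\dots,n\}$, let $d_i$ be the loop-free diagram in $T_{n+1}(\emptyset)$ whose only non-propagating edges join node $i$ to node $i+1$ in the north face and node $i'$ to node $(i+1)'$ in the south face, all other nodes being joined by propagating edges $j\mapsto j'$. One then checks, entirely with pictures, that the $d_i$ satisfy the defining relations of $\TL(A_n)$ (the type-$A$ form of Theorem~\ref{def:TL(D)}): concatenating $d_i$ with itself produces a single loop and otherwise returns $d_i$, so $d_i^2=\delta d_i$ after applying the relation of Figure~\ref{Fig077}; when $|i-j|>1$ the diagrams $d_i$ and $d_j$ meet disjoint sets of nodes, hence commute; and when $|i-j|=1$ the concatenation $d_id_jd_i$ straightens, with no loop created, to $d_i$. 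Since $\TL(A_n)$ is presented by these generators and relations, $\theta(b_i):=d_i$ extends to a $\Z[\delta]$-algebra homomorphism, which sends a monomial basis element $b_w$ with reduced expression $\overline{w}=s_{x_1}\cdots s_{x_m}$ to the product $d_{x_1}\cdots d_{x_m}$ in $\DTL(A_n)$; because multiplying loop-free diagrams there yields a power of $\delta$ times a single loop-free diagram, we have $\theta(b_w)=\delta^{k_w}d_w$ for some integer $k_w\ge 0$ and some loop-free diagram $d_w$.

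The key step is surjectivity: every loop-free diagram in $T_{n+1}(\emptyset)$ must be a product of the $d_i$. I would prove this as a combinatorial lemma on non-crossing matchings, by induction on $\a(d)$: if $\a(d)=0$ then $d=d_e$ (Figure~\ref{Fig076}) is the identity; if $\a(d)>0$, pick an innermost non-propagating edge of the north face, which necessarily joins two adjacent nodes $k,k+1$, factor a copy of $d_k$ off the top, do the symmetric thing on the south face, and check that the remaining diagram has strictly smaller $\a$-value. Once surjectivity is in hand I compare ranks: by the fact recalled above, $\DTL(A_n)$ is free over $\Z[\delta]$ on the loop-free diagrams in $T_{n+1}(\emptyset)$, of which there are $C_{n+1}=\tfrac{1}{n+2}\binom{2n+2}{n+1}$ (a loop-free diagram is a non-crossing perfect matching of $2(n+1)$ boundary points); and by Theorem~\ref{t-basis} the rank of $\TL(A_n)$ equals $|\FC(A_n)|$, which is also $C_{n+1}$ since $W(A_n)\cong S_{n+1}$ and, as is classical, the fully commutative elements of $S_{n+1}$ are exactly the $321$-avoiding permutations. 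A surjection between free modules of equal finite rank over the integral domain $\Z[\delta]$ is an isomorphism, so $\theta$ is an isomorphism. For the ``moreover'' clause, $\{\theta(b_w)=\delta^{k_w}d_w : w\in\FC(A_n)\}$ is then a $\Z[\delta]$-basis of $\DTL(A_n)$; comparing it with the loop-free-diagram basis forces $w\mapsto d_w$ to be a bijection onto the loop-free diagrams and each $\delta^{k_w}$ to be a unit, i.e.\ $k_w=0$. Hence every loop-free diagram is $\theta(b_w)$ for a unique $w\in\FC(A_n)$, that is, corresponds to a unique monomial basis element.

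The main obstacle is the surjectivity lemma. Verifying the three diagrammatic relations and running the rank count are routine, but the combinatorial statement that every non-crossing $(n+1)$-diagram factors through the cup--cap generators requires a careful induction (or an equivalent normal-form argument): one must make the peeling-off procedure on the north and south faces genuinely compatible and confirm that it lowers $\a$ at each stage without inadvertently creating loops.
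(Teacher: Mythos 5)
The paper never proves Theorem~\ref{kauff}: it is stated with a reference to Kauffman and Penrose and left unproved, so there is no internal argument to compare yours against. Judged on its own, your outline is the standard proof from the literature and is essentially sound: use the presentation of $\TL(A_n)$ by the $b_i$, check the three relations pictorially for the cup--cap diagrams $d_i$ to obtain the homomorphism $\theta$, show the $d_i$ generate every loop-free diagram, and close with the count $|\FC(A_n)|=C_{n+1}=|\{\text{loop-free }(n+1)\text{-diagrams}\}|$ (321-avoiding permutations on one side, non-crossing perfect matchings on the other) plus the fact that a surjection of free $\Z[\delta]$-modules of equal finite rank has torsion-free kernel of rank zero, hence is an isomorphism. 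Your extraction of the ``moreover'' clause is also correct: once $\{\delta^{k_w}d_w\}$ is a basis of the free module on the loop-free diagrams, linear independence forces $w\mapsto d_w$ to be injective (hence bijective by cardinality) and each $\delta^{k_w}$ to be a unit, so $k_w=0$.

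The one step that is genuinely thinner than it looks is the surjectivity lemma, and you are right to single it out. The naive induction ``peel an innermost north cup $\{k,k+1\}$ off as a left factor $d_k$ and observe that $\a$ drops'' does not quite run as stated: writing $d=d_kd'$ amounts to deleting that cup and splicing nodes $k$ and $k+1$ into the edge bounding the region directly below it, and $\a(d')<\a(d)$ only when that edge is a south cap; when it is a propagating edge or an enclosing north cup, $\a(d')=\a(d)$ and the induction stalls. The standard repairs are either a double induction (on $\a$ together with a secondary statistic such as the total nesting depth of the north cups) or, more cleanly, Jones's explicit normal form $d=(d_{j_1}d_{j_1-1}\cdots d_{k_1})(d_{j_2}\cdots d_{k_2})\cdots$ read off from the two half-diagrams, which simultaneously yields the bijection with $\FC(A_n)$ via heaps and makes the rank count and the ``moreover'' clause immediate. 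Either fix is routine, so I would call this an incompletely executed step rather than a wrong approach.
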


\end{section}


\begin{section}{Decorated diagrams}\label{decorated}


We now describe the construction of diagrams whose edges carry decorations. We will use the symbol $\bcirc$, which we refer to as a \emph{decoration}, to adorn the edges of a diagram. Let $\mathbf{b}=x_{1}x_{2}\cdots x_{r}$ be a finite sequence of decorations, where each $x_i=\bcirc$. We say that $\mathbf{b}$ is a \emph{block} of decorations of \emph{width} $r$.  Note that a block of width $1$ is just a single decoration.  The string $\bcirc~\bcirc~\bcirc~\bcirc~\bcirc~\bcirc~\bcirc$ is an example of a block of width 7.

Ultimately, we have three restrictions (D0, D1, D2) for how we allow the edges of a diagram to be decorated by blocks, which we will now outline. Note that we are maintaining consistency with cases involving multiple decoration types as in~\cite{Ernst2012}, and that $\a(d)$ is defined the same way as in type $A_n$. Let $d$ be a fixed concrete pseudo $k$-diagram and let $e$ be an edge of $d$.

\begin{enumerate}[leftmargin=0.5in]
\item[(D0)] If $\a(d)=0$, then $e$ is undecorated.
\end{enumerate}
In particular, the unique diagram $d_{e}$ with $\a$-value 0 and no loops is undecorated.
Subject to some restrictions, if $\a(d)>0$, we may adorn $e$ with a finite sequence of blocks of decorations $\mathbf{b}_{1}, \dots, \mathbf{b}_{m}$ such that adjacency of blocks and decorations of each block is preserved as we travel along $e$.  

If $e$ is a non-loop edge, the convention we adopt is that the decorations of the block are placed so that we can read off the sequence of decorations as we traverse $e$ from $i$ to $j'$ if $e$ is propagating, or from $i$ to $j$ (respectively, $i'$ to $j'$) with $i < j$ (respectively, $i' < j'$) if $e$ is non-propagating.

If $e$ is a loop, reading the corresponding sequence of decorations depends on an arbitrary choice of starting point and direction round the loop.

If $\a(d)\neq 0$, then we also require the following.

\begin{enumerate}[leftmargin=0.5in]
\item[(D1)] We allow adjacent blocks on $e$ to be conjoined to form larger blocks.
\end{enumerate}

\begin{definition}
A \emph{concrete decorated pseudo $k$-diagram} is any concrete pseudo $k$-diagram with decorations satisfying (D0) and (D1).
\end{definition}

\begin{definition}
We define two concrete pseudo decorated $k$-diagrams to be equivalent if we can isotopically deform one diagram into the other such that any intermediate diagram is also a concrete decorated pseudo $k$-diagram. 
\end{definition}

\begin{definition}\label{def:T_{k}(Omega)}
A \emph{decorated pseudo $k$-diagram} is defined to be an equivalence class of equivalent concrete decorated pseudo $k$-diagrams.  We denote the set of decorated diagrams by $T_{k}(\bcirc)$.
Then define $\P_{k}(\bcirc)$ to be the free $\Z[\delta]$-module having the decorated pseudo $k$-diagrams $T_{k}(\bcirc)$ as a basis.  
\end{definition}

We define multiplication in $\P_{k}(\bcirc)$ by concatenating diagrams, conjoining blocks and extending bilinearly (as in Definition~\ref{def:P_k(emptyset)}). It follows from Section 3 of~\cite{Ernst2012} that the multiplication just defined turns $\P_{k}(\bcirc)$ into a well-defined associative $\Z[\delta]$-algebra.
 
\begin{example}\label{ex:decorated diagrams}
Here are a few examples.
\begin{enumerate}[label=(\alph*)]
\item  The diagram in Figure~\ref{Fig080} is an example of a decorated pseudo $5$-diagram. The decorations on the unique propagating edge can be conjoined to form a maximal block of width 4.

\item  The diagram in Figure~\ref{Fig081} is another example of a decorated pseudo $5$-diagram, but with $\a$-value 1.  Note that the decorations can be conjoined to form a block of width 3.

\item Figure~\ref{fig:product} depicts the product of the diagram in Figure~\ref{Fig080} and the diagram in Figure~\ref{Fig081}.
\end{enumerate}
\end{example}

\begin{figure}[!ht]
\centering
\subcaptionbox{\label{Fig081}}{
\begin{tikzpicture}
\fivebox{0};
\draw (1,0) -- (1,-2)
	node[fill=cyan, pos=0.25, shape=circle, inner sep=1.8pt, minimum size=2pt]{}
	node[fill=cyan,  pos=0.5, shape=circle, inner sep=1.8pt, minimum size=2pt]{}
	node[fill=cyan,  pos=0.75, shape=circle, inner sep=1.8pt, minimum size=2pt]{};
\draw (2,0)  arc (-180:0:0.5 and 0.4) ;
\draw (2,-2)  arc (180:0:0.5 and 0.4) ;
\draw (4,0) -- (4,-2);
\draw (5,0) -- (5,-2);
\end{tikzpicture}}
\qquad
\subcaptionbox{\label{Fig080}}{
\begin{tikzpicture}
\fivebox{0};
\draw (1,0)  arc (-180:0:1.5 and 0.8) ;
\draw (2,0)  arc (-180:0:0.5 and 0.4) ;
\draw (1,-2)  arc (180:0:0.5 and 0.4) ;
\draw (4,-2)  arc (180:0:0.5 and 0.4) ;
\draw (5,0) -- (3,-2)
	node[fill=cyan,  pos=0.2, shape=circle, inner sep=1.8pt, minimum size=2pt]{}
	node[fill=cyan,  pos=0.4, shape=circle, inner sep=1.8pt, minimum size=2pt]{}
	node[fill=cyan,  pos=0.6, shape=circle, inner sep=1.8pt, minimum size=2pt]{}
	node[fill=cyan,  pos=0.8, shape=circle, inner sep=1.8pt, minimum size=2pt]{}; 
\draw[fill=cyan, draw=white]{(1.5,-1.6) circle (2.8pt)};
\draw[fill=cyan, draw=white]{(2.5,-0.4) circle (2.8pt)};
\draw[fill=cyan, draw=white]{(4.5,-1.6) circle (2.8pt)};
\end{tikzpicture}}
\caption{Examples of decorated pseudo diagrams.}
\end{figure}
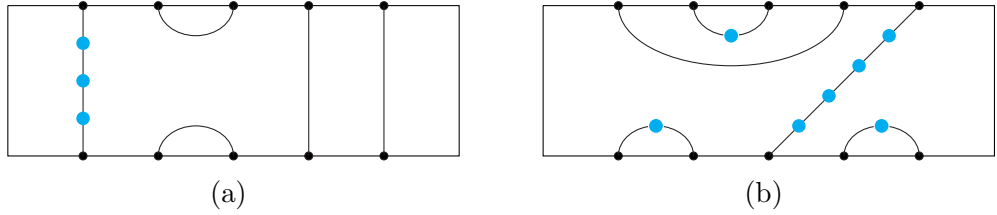

\begin{figure}[!ht]
\centering
\begin{tabular}[c]{l}
\begin{tikzpicture}
\fivebox{0};
\draw (1,0)  arc (-180:0:1.5 and 0.8) ;
\draw (2,0)  arc (-180:0:0.5 and 0.4) ;
\draw (1,-2)  arc (180:0:0.5 and 0.4) ;
\draw (4,-2)  arc (180:0:0.5 and 0.4) ;
\draw (5,0) -- (3,-2)
	node[fill=cyan,  pos=0.2, shape=circle, inner sep=1.8pt, minimum size=2pt]{}
	node[fill=cyan,  pos=0.4, shape=circle, inner sep=1.8pt, minimum size=2pt]{}
	node[fill=cyan,  pos=0.6, shape=circle, inner sep=1.8pt, minimum size=2pt]{}
	node[fill=cyan,  pos=0.8, shape=circle, inner sep=1.8pt, minimum size=2pt]{};
\draw[fill=cyan, draw=white]{(1.5,-1.6) circle (2.8pt)};
\draw[fill=cyan, draw=white]{(2.5,-0.4) circle (2.8pt)};
\draw[fill=cyan, draw=white]{(4.5,-1.6) circle (2.8pt)};
\fivebox{-2};
\draw (1,-2) -- (1,-4)
	node[fill=cyan,  pos=0.25, shape=circle, inner sep=1.8pt, minimum size=2pt]{}
	node[fill=cyan,  pos=0.5, shape=circle, inner sep=1.8pt, minimum size=2pt]{}
	node[fill=cyan,  pos=0.75, shape=circle, inner sep=1.8pt, minimum size=2pt]{};
\draw (2,-2)  arc (-180:0:0.5 and 0.4) ;
\draw (2,-4)  arc (180:0:0.5 and 0.4) ;
\draw (4,-2) -- (4,-4);
\draw (5,-2) -- (5,-4);
\end{tikzpicture}
\end{tabular}
~$=$~
\begin{tabular}[c]{l}
\begin{tikzpicture}
\fivebox{0};
\draw (1,0)  arc (-180:0:1.5 and 0.8) ;
\draw (2,0)  arc (-180:0:0.5 and 0.4) ;
\draw (5,0) -- (1,-2)
	node[fill=cyan,  pos=0.15, shape=circle, inner sep=1.8pt, minimum size=2pt]{}
	node[fill=cyan,  pos=0.25, shape=circle, inner sep=1.8pt, minimum size=2pt]{}
	node[fill=cyan,  pos=0.35, shape=circle, inner sep=1.8pt, minimum size=2pt]{}
	node[fill=cyan,  pos=0.45, shape=circle, inner sep=1.8pt, minimum size=2pt]{}
	node[fill=cyan,  pos=0.55, shape=circle, inner sep=1.8pt, minimum size=2pt]{}
	node[fill=cyan,  pos=0.65, shape=circle, inner sep=1.8pt, minimum size=2pt]{}
	node[fill=cyan,  pos=0.75, shape=circle, inner sep=1.8pt, minimum size=2pt]{}
	node[fill=cyan,  pos=0.85, shape=circle, inner sep=1.8pt, minimum size=2pt]{}; 
\draw (2,-2)  arc (180:0:0.5 and 0.4) ;
\draw (4,-2)  arc (180:0:0.5 and 0.4) ;
\draw[fill=cyan, draw=white]{(2.5,-0.4) circle (2.8pt)};
\draw[fill=cyan, draw=white]{(4.5,-1.6) circle (2.8pt)};
\end{tikzpicture}
\end{tabular}
\caption{The product of two decorated pseudo diagrams}
\label{fig:product}
\end{figure}
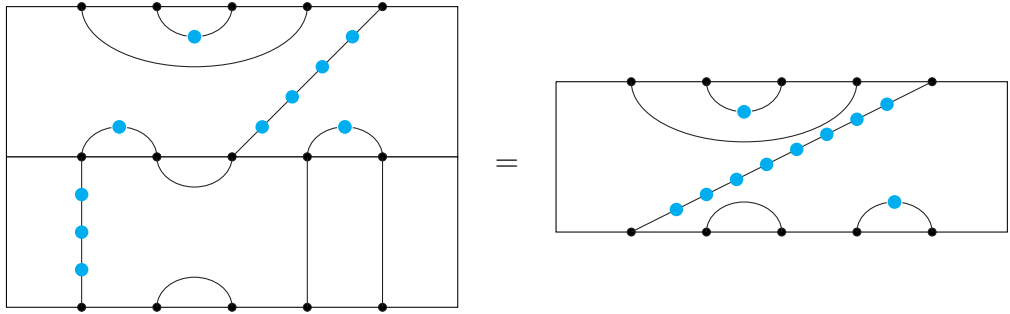

In type $D_n$, we also require the decorations to be ``left exposed" (requirement (D2)), a concept that appears in the context of the Temperley--Lieb algebra of type $B$~\cite{Green1998a}.

\begin{enumerate}[leftmargin=0.5in]
\item[(D2)]  All decorated edges can be simultaneously deformed so as to take decorations to the left wall of the diagram without crossing any other edges.
\end{enumerate}

\begin{remark}
In type $D_n$ we only need the decorations to be left-exposed, but some types also require the decorations to be right-exposed---for example, the diagrammatic representation of type $\C_n$ in~\cite{Ernst2012}.
\end{remark}

\begin{definition}
A \emph{concrete L-decorated pseudo $k$-diagram} is any concrete decorated pseudo $k$-diagram that also satisfies condition (D2).
\end{definition}

\begin{definition}\label{def:T_{k}^{LR}(Omega)}
An \emph{L-decorated pseudo $k$-diagram} is defined to be an equivalence class of equivalent concrete L-decorated pseudo $k$-diagrams.  We denote the set of equivalence classes from $T_{k}(\bcirc)$ where representatives are concrete L-decorated pseudo $k$-diagrams by $T_{k}^L(\bcirc)$. Then define $\P_{k}^L(\bcirc)$ to be the subalgrebra of $\P_{k}(\bcirc)$ with $T_{k}^L(\bcirc)$ as a basis.  
\end{definition}

Note that ``L" stands for ``left" in the definitions above.

\begin{example}
The diagram in Figure~\ref{Fig081} is an L-decorated diagram, however, the diagram in Figure~\ref{Fig080} is not an L-decorated diagram since there are two edges with decorations that cannot be deformed so as to take the decoration to the left wall of the diagram without crossing another edge.
\end{example}

\begin{remark}
We observe that the product of two L-decorated pseudo $k$-diagrams is a L-decorated pseudo $k$-diagram.
\end{remark}

\end{section}


\begin{section}{Diagrammatic relations}


Our immediate goal is to define a quotient of $\P_{k}^{L}(\bcirc)$ having relations that are determined by applying local combinatorial rules to the diagrams. 

\begin{definition}\label{def:big diagram alg}
Let $\widehat{\P}_{k}^{L}(\bcirc)$ be the associative $\Z[\delta]$-algebra equal to the quotient of $\P_{k}^{L}(\bcirc)$ by the relations depicted in Figure~\ref{fig:relations}, where the decorations on the edges represent adjacent decorations of the same block.
\end{definition}

\begin{figure}[!ht]
\centering
$\begin{tabular}[c]{l}
\begin{tikzpicture}
\lp{0};
\end{tikzpicture}
\end{tabular}
~$=$~
$\begin{tabular}[c]{l}
\begin{tikzpicture}
\node at (-1.2,0) {};
\node at (-1,0) {$\delta$};
\node at (-0.5,0) {};
\end{tikzpicture}
\end{tabular}
\quad \quad \quad
\begin{tabular}[c]{l}
\begin{tikzpicture}
\draw (1,0) -- (1,-2)
	node[fill=cyan,  pos=0.35, shape=circle, inner sep=1.8pt, minimum size=2pt]{}
	node[fill=cyan,  pos=0.65, shape=circle, inner sep=1.8pt, minimum size=2pt]{};
\end{tikzpicture}
\end{tabular}
~$=$~
\begin{tabular}[c]{l}
\begin{tikzpicture}
\draw (1,0) -- (1,-2);
\end{tikzpicture}
\end{tabular}
\quad \quad \quad
$\begin{tabular}[c]{l}
\begin{tikzpicture}
\dlp{0}{-1};
\draw (1,0) -- (1,-2)
	node[fill=cyan,  pos=0.5, shape=circle, inner sep=1.8pt, minimum size=2pt]{};
\end{tikzpicture}
\end{tabular}
~$=$~
$\begin{tabular}[c]{l}
\begin{tikzpicture}
\dlp{0}{-1};
\draw (1,0) -- (1,-2);
\end{tikzpicture}
\end{tabular}
\caption{Defining relations of $\widehat{\P}_{k}^{L}(\bcirc)$.}
\label{fig:relations}
\end{figure}

The third relation in Figure~\ref{fig:relations} means that any edge loses its decoration in the presence of a decorated loop. Using the first and third relation, if there is more than one decorated loop, then all loops are replaced with the coefficient $\delta$ except for one decorated loop. The second relation ensures that no edge may carry more than one decoration. Note that all of the relations are local in the sense that a single reduction involves edges bounding the same region of the diagram. 

\begin{remark}\label{diagbasis}
The local diagrammatic relations for $\P^{L}_{k}(\bcirc)$ make it clear that L-decorated diagrams with no undecorated loops having either exactly one decorated loop and no other decorations or no decorated loops with edges having at most one decoration form a basis for $\P^{L}_{k}(\bcirc)$.
\end{remark}

\begin{example}
Figure~\ref{fig:apply} depicts the relations from Figure~\ref{fig:relations} applied to the diagram from $\P^{L}_{5}(\bcirc)$ in Figure~\ref{Fig081}.
\end{example}

\begin{figure}[!h]
\centering
\begin{tikzpicture}[scale=1]
\fivebox{0};
\draw (1,0) -- (1,-2)
	node[fill=cyan,  pos=0.5, shape=circle, inner sep=1.8pt, minimum size=2pt]{};
\draw (2,0)  arc (-180:0:0.5 and 0.4) ;
\draw (2,-2)  arc (180:0:0.5 and 0.4) ;
\draw (4,0) -- (4,-2);
\draw (5,0) -- (5,-2);
\end{tikzpicture}
\caption{Example of a diagram from $\widehat{\P}_{5}^{L}(\bcirc)$.}
\label{fig:apply}
\end{figure}

\begin{example}
Figure~\ref{Fig101--Fig102} depicts multiplication of three diagrams from $\widehat{\P}_{5}^{L}(\bcirc)$ and Figure~\ref{Fig103--Fig104} shows an example where a decorated loop is present. 
\end{example}

\begin{figure}[!ht]
\centering
\begin{tabular}[c]{l}
\begin{tikzpicture}[scale=1]
\fivebox{2};
\draw (1,2) arc (-180:0:0.5 and 0.4);
\draw[fill=cyan, draw=white]{(1.5,1.6) circle (2.8pt)};
\draw (3,2) -- (1,0)
	node[fill=cyan,  pos=0.5, shape=circle, inner sep=1.8pt, minimum size=2pt]{};
\draw (2,0) arc (180:0:0.5 and 0.4);
\draw (4,0) arc (180:0:0.5 and 0.4);
\draw (4,2) arc (-180:0:0.5 and 0.4);
\fivebox{0};
\draw (1,0)  arc (-180:0:0.5 and 0.4) ;
\draw (3,0)  arc (-180:0:0.5 and 0.4) ;
\draw (1,-2)  arc (180:0:0.5 and 0.4) ;
\draw (4,-2)  arc (180:0:0.5 and 0.4) ;
\draw (5,0) -- (3,-2)
	node[fill=cyan,  pos=0.5, shape=circle, inner sep=1.8pt, minimum size=2pt]{};
\draw[fill=cyan, draw=white]{(1.5,-1.6) circle (2.8pt)};
\draw[fill=cyan, draw=white]{(1.5,-0.4) circle (2.8pt)};
\fivebox{-2};
\draw (1,-2) -- (1,-4)
	node[fill=cyan,  pos=0.5, shape=circle, inner sep=1.8pt, minimum size=2pt]{};
\draw (2,-2) -- (2,-4);
\draw (3,-2) arc (-180:0:0.5 and 0.4);
\draw (3,-4) arc (180:0:0.5 and 0.4);
\draw (5,-2) -- (5,-4);
\end{tikzpicture}
\end{tabular}
~$=$~
\begin{tabular}[c]{l}
\begin{tikzpicture}[scale=1]
\fivebox{0};
\draw (1,0) arc (-180:0:0.5 and 0.4);
\draw (4,0) arc (-180:0:0.5 and 0.4);
\draw (3,0) -- (5,-2)
	node[fill=cyan,  pos=0.5, shape=circle, inner sep=1.8pt, minimum size=2pt]{};
\draw (1,-2)  arc (180:0:0.5 and 0.4) ;
\draw (3,-2)  arc (180:0:0.5 and 0.4) ;
\draw[fill=cyan, draw=white]{(1.5,-0.4) circle (2.8pt)};
\end{tikzpicture}
\end{tabular}
\caption{Example of multiplication in $\widehat{\P}_{5}^{L}(\bcirc)$.}\label{Fig101--Fig102}
\end{figure} 

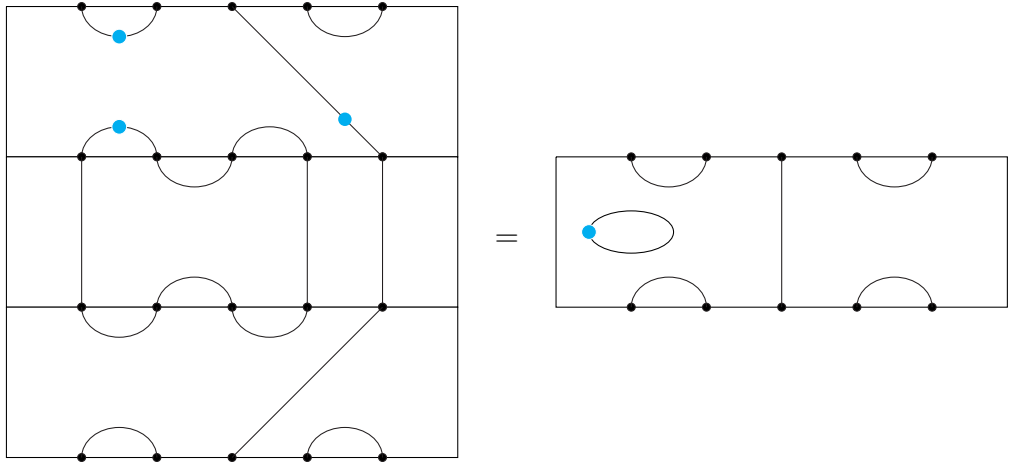
\begin{figure}[!ht]
\centering
\begin{tabular}[c]{l}
\begin{tikzpicture}[scale=1]
\fivebox{2};
\draw (1,0)  arc (180:0:0.5 and 0.4) ;
\draw[fill=cyan, draw=white]{(1.5,0.4) circle (2.8pt)};
\draw (1,2)  arc (-180:0:0.5 and 0.4) ;
\draw[fill=cyan, draw=white]{(1.5,1.6) circle (2.8pt)};
\draw (3,0)  arc (180:0:0.5 and 0.4) ;
\draw (4,2) arc (-180:0:0.5 and 0.4);
\draw (5,0) -- (3,2)
	node[fill=cyan,  pos=0.25, shape=circle, inner sep=1.8pt, minimum size=2pt]{};
\fivebox{0};
\draw (1,-2) -- (1,0);
\draw (2,0)  arc (-180:0:0.5 and 0.4) ;
\draw (2,-2) arc (180:0:0.5 and 0.4);
\draw (4,0) -- (4,-2);
\draw (5,0) -- (5,-2);
\fivebox{-2};
\draw (1,-2) arc (-180:0:0.5 and 0.4);
\draw (3,-2) arc (-180:0:0.5 and 0.4);
\draw (5,-2) -- (3,-4);
\draw (1,-4) arc (180:0:0.5 and 0.4);
\draw (4,-4) arc (180:0:0.5 and 0.4);
\end{tikzpicture}
\end{tabular}
~$=$~
\begin{tabular}[c]{l}
\begin{tikzpicture}[scale=1]
\fivebox{0};
\draw (1,0) arc (-180:0:0.5 and 0.4);
\draw (4,0) arc (-180:0:0.5 and 0.4);
\draw (3,0) -- (3,-2);
\draw (1,-2)  arc (180:0:0.5 and 0.4) ;
\draw (4,-2)  arc (180:0:0.5 and 0.4) ;
\dlp{1}{-1};
\end{tikzpicture}
\end{tabular}
\caption{Example of multiplication in $\widehat{\P}_{5}^{L}(\bcirc)$ resulting in a decorated loop.}
\label{Fig103--Fig104}
\end{figure}

\end{section}


\begin{section}{Simple diagrams}\label{sec:simple}


In this section, we define the diagram algebra $\DTL(D_n)$ as a certain subalgebra of $\widehat{\P}_{n+1}^{L}(\bcirc)$ that turns out to be a diagrammatic representation of $\TL(D_n)$. 

Define the \emph{simple diagrams} $d_{\overline{1}}, d_{1}, d_{2}, \dots, d_{n-1}$ as in Figure~\ref{Fig107--Fig109}.  Note that the simple diagrams are elements of the basis for $\widehat{\P}_{n+1}^{L}(\bcirc)$ (see Definition~\ref{def:big diagram alg} and Remark~\ref{diagbasis}).

\begin{figure}[h]
\begin{align*}
d_{\overline{1}} &=
\begin{tabular}[c]{l}
\begin{tikzpicture}
\sixbox{0};
\draw (1,0) arc (-180:0:0.5 and 0.4);
\draw (1,-2) arc (180:0:0.5 and 0.4);
\draw (3,0) -- (3,-2);
\draw (4,0) -- (4,-2);
\draw (5,0) -- (5,-2);
\draw (6,0) -- (6,-2);
\draw[fill=cyan, draw=white]{(1.5,-0.4) circle (2.8pt)};
\draw[fill=cyan, draw=white]{(1.5,-1.6) circle (2.8pt)};
\node at (5.5,-1) {$\cdots$};
\end{tikzpicture}
\end{tabular}
\\
d_{1} &=
\begin{tabular}[c]{l}
\begin{tikzpicture}
\sixbox{0};
\draw (1,0) arc (-180:0:0.5 and 0.4);
\draw (1,-2) arc (180:0:0.5 and 0.4);
\draw (3,0) -- (3,-2);
\draw (4,0) -- (4,-2);
\draw (5,0) -- (5,-2);
\draw (6,0) -- (6,-2);
\node at (5.5,-1) {$\cdots$};
\end{tikzpicture}
\end{tabular}
\\
d_{2} &= 
\begin{tabular}[c]{l}
\begin{tikzpicture}
\sixbox{0};
\draw (2,0) arc (-180:0:0.5 and 0.4);
\draw (2,-2) arc (180:0:0.5 and 0.4);
\draw (1,0) -- (1,-2);
\draw (4,0) -- (4,-2);
\draw (5,0) -- (5,-2);
\draw (6,0) -- (6,-2);
\node at (5.5,-1) {$\cdots$};
\end{tikzpicture}
\end{tabular}
\\
\quad &~ \vdots \\
d_{n-1} &=
\begin{tabular}[c]{l}
\begin{tikzpicture}
\sixbox{0};
\draw (5,0) arc (-180:0:0.5 and 0.4);
\draw (5,-2) arc (180:0:0.5 and 0.4);
\draw (1,0) -- (1,-2);
\draw (2,0) -- (2,-2);
\draw (3,0) -- (3,-2);
\draw (4,0) -- (4,-2);
\node at (3.5,-1) {$\cdots$};
\end{tikzpicture}
\end{tabular}
\end{align*}
\caption{The simple diagrams of $\widehat{\P}_{n+1}^{L}(\bcirc)$.}
\label{Fig107--Fig109}
\end{figure}

\begin{remark}
It is well known that $d_1,\ldots ,d_{n-1}$ generate the basis diagrams of $\DTL(A_{n-1})$. Moreover, if $s_{x_1} \cdots s_{x_k}$ is a reduced expression for $w\in FC(A_{n-1})$, the isomorphism of Theorem~\ref{kauff} maps the monomial basis element $b_w$ to the diagram $d_w:=d_{x_1}\cdots d_{x_k}$.
\end{remark}

Finally, we are ready to define the diagram algebra that we are ultimately interested in.  

\begin{definition}\label{def:D_n}
Let $\DTL(D_{n})$ be the $\Z[\delta]$-subalgebra of $\widehat{\P}_{n+1}^{L}(\bcirc)$ generated as a unital algebra by $d_{\overline{1}}, d_{1}, d_{2}, \dots, d_{n-1}$ with multiplication inherited from $\widehat{\P}_{n+1}^{L}(\bcirc)$.
\end{definition}

Note that $\DTL(D_{n})\lneq \widehat{\P}_{n+1}^{L}(\bcirc)$ since the diagram in Figure~\ref{fig:apply}, for example, is in $\widehat{\P}_{n+1}^{L}(\bcirc)$ but not in $\DTL(D_{n})$.

\begin{proposition}\label{rem:D relations hold}
Each of the following relations is satisfied for $\DTL(D_{n})$.
\begin{enumerate}
\item $d_{i}^{2}=\delta d_{i}$ for all $i$;
\item $d_{i}d_{j}=d_{j}d_{i}$ if $s_i$ and $s_j$ are not connected in the Coxeter graph;
\item $d_{i}d_{j}d_{i}=d_{i}$ if $s_i$ and $s_j$ are connected in the Coxeter graph.
\end{enumerate}
\end{proposition}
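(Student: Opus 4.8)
The plan is to verify each of the three relations by direct diagram concatenation inside $\widehat{\P}_{n+1}^{L}(\bcirc)$, using the normal form of Remark~\ref{diagbasis} and, for the relations not involving $s_{\overline 1}$, the identification of $d_1,\dots,d_{n-1}$ with the monomial generators of $\TL(A_{n-1})$ from Theorem~\ref{kauff}. Recall from Figure~\ref{Fig107--Fig109} that for $i\in\{1,\dots,n-1\}$ the simple diagram $d_i$ consists of a single non‑propagating ``cup'' joining north nodes $i,i+1$, a single ``cap'' joining south nodes $i',(i+1)'$, and propagating edges in every other column, and that $d_{\overline 1}$ is identical to $d_1$ except that its cup and its cap each carry one decoration. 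The two facts about decorations used repeatedly are the defining relations of Figure~\ref{fig:relations}: an undecorated closed loop may be deleted at the cost of a scalar $\delta$, and any edge (in particular a loop) carrying two decorations equals the same edge with no decorations.

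Relations (1) and (2) are then immediate. For (1), stacking $d_i$ on $d_i$ creates, in the middle strip, a single closed loop built from the cap of the upper copy and the cup of the lower copy, while every other edge passes straight through; thus the diagram outside this loop is literally $d_i$ again. If $i\in\{1,\dots,n-1\}$ the loop is undecorated and contributes $\delta$; if $i=\overline 1$ the loop is assembled from two decorated arcs, so it carries two decorations which cancel, leaving an undecorated loop contributing $\delta$. In every case $d_i^2=\delta d_i$, and since the $\overline 1$‑decorations cancel, no decorated loop is ever produced. For (2): if $\{i,j\}$ lies in $\{s_1,\dots,s_{n-1}\}$ with $|i-j|>1$, or equals $\{s_{\overline 1},s_k\}$ with $k\geq 3$, the cup/cap of $d_i$ and of $d_j$ occupy disjoint non‑adjacent columns, so the concatenation is visibly the same in either order and produces no loop; the only remaining commuting pair is $\{s_{\overline 1},s_1\}$, and here one checks directly that each of $d_{\overline 1}d_1$ and $d_1 d_{\overline 1}$ yields one decorated loop together with the $d_1$‑shaped diagram whose surviving cup and cap, in the presence of that loop, shed their decorations by the third relation of Figure~\ref{fig:relations}; the two products are therefore equal.

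For (3), when $\{s_i,s_j\}\subseteq\{s_1,\dots,s_{n-1}\}$ is an edge of the graph (so $|i-j|=1$), the identity $d_i d_j d_i=d_i$ is the type‑$A$ Temperley--Lieb braid relation: by Theorem~\ref{kauff} the subalgebra $\DTL(A_{n-1})\leq\DTL(D_n)$ generated by $d_1,\dots,d_{n-1}$ is isomorphic to $\TL(A_{n-1})$ with $d_i\mapsto b_i$, and $b_i b_j b_i=b_i$ holds by Theorem~\ref{def:TL(D)}. It remains to treat the pair $\{s_{\overline 1},s_2\}$, i.e.\ to show $d_{\overline 1}d_2 d_{\overline 1}=d_{\overline 1}$ and $d_2 d_{\overline 1}d_2=d_2$. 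I would stack the three diagrams and trace the one edge that threads through the two contracting strips: in $d_{\overline 1}d_2 d_{\overline 1}$ this edge becomes a propagating edge in column $3$ carrying the cap‑decoration of the top $d_{\overline 1}$ and the cup‑decoration of the bottom $d_{\overline 1}$ — two decorations, which cancel — while the cup of the top copy and the cap of the bottom copy survive with their single decorations and no closed loop is formed, so the result is exactly $d_{\overline 1}$; the computation for $d_2 d_{\overline 1}d_2$ is the mirror image and yields $d_2$.

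The main obstacle is precisely this last case: correctly bookkeeping the decorations through the threefold concatenations $d_{\overline 1}d_2 d_{\overline 1}$ and $d_2 d_{\overline 1}d_2$. One must confirm that the two decorations picked up along the threading edge land on the \emph{same} edge, so that the cancellation in Figure~\ref{fig:relations} applies, and that no spurious decorated loop appears — for such a loop would, via the third relation of Figure~\ref{fig:relations}, strip the decorations off the surviving cup and cap and destroy the identity. Everything else is a routine diagram chase.
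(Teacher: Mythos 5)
Your proposal is correct and follows essentially the same route as the paper's proof: a direct diagram chase, with the undecorated cases handled by concatenation, the $d_{\overline{1}}^{\,2}$, $d_{\overline{1}}d_2d_{\overline{1}}$, and $d_2d_{\overline{1}}d_2$ cases resolved by cancelling two decorations on a single edge or loop, and the $d_{\overline{1}}d_1$ case resolved by the rule that edges shed decorations in the presence of a decorated loop. The only cosmetic difference is that for the type-$A$ instances of relation (3) you cite Theorem~\ref{kauff} rather than drawing the concatenation as the paper does; your bookkeeping of the decorations in the $\{s_{\overline{1}},s_2\}$ cases is exactly what the paper verifies.
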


\begin{proof}
We will first consider only the diagrams without decorations, namely, $d_1,\ldots ,d_n$. Then we will consider special cases involving decorations with $d_{\overline{1}}$.
\begin{enumerate}
\item We see that for $i\neq \overline{1}$

\begin{align*}
d_{i}^2 &= ~
\begin{tabular}[c]{l}
\begin{tikzpicture}
\sixbox{0};
\draw (3,0) arc (-180:0:0.5 and 0.4);
\draw (3,-2) arc (180:0:0.5 and 0.4);
\draw (1,0) -- (1,-2);
\draw (2,0) -- (2,-2);
\draw (5,0) -- (5,-2);
\draw (6,0) -- (6,-2);
\node at (1.5,-1) {$\cdots$};
\node at (5.5,-1) {$\cdots$};
\sixbox{-2};
\draw (3,-4) arc (180:0:0.5 and 0.4);
\draw (3,-2) arc (-180:0:0.5 and 0.4);
\draw (1,-4) -- (1,-2);
\draw (2,-4) -- (2,-2);
\draw (5,-4) -- (5,-2);
\draw (6,-4) -- (6,-2);
\node at (1.5,-3) {$\cdots$};
\node at (5.5,-3) {$\cdots$};
\end{tikzpicture}
\end{tabular}\\
&= \delta 
\begin{tabular}[c]{l}
\begin{tikzpicture}
\sixbox{0};
\draw (3,0) arc (-180:0:0.5 and 0.4);
\draw (3,-2) arc (180:0:0.5 and 0.4);
\draw (1,0) -- (1,-2);
\draw (2,0) -- (2,-2);
\draw (5,0) -- (5,-2);
\draw (6,0) -- (6,-2);
\node at (1.5,-1) {$\cdots$};
\node at (5.5,-1) {$\cdots$};
\end{tikzpicture}
\end{tabular}\\
&= \delta d_{i}, 
\end{align*}
since the loop may be replaced with the coefficient $\delta$. In the case of $d_{\overline{1}}^2$, we see that

\begin{align*}
d_{\overline{1}}^2 &= ~
\begin{tabular}[c]{l}
\begin{tikzpicture}
\sixbox{0};
\draw (1,0) arc (-180:0:0.5 and 0.4);
\draw (1,-2) arc (180:0:0.5 and 0.4);
\draw (3,0) -- (3,-2);
\draw (4,0) -- (4,-2);
\draw (5,0) -- (5,-2);
\draw (6,0) -- (6,-2);
\node at (5.5,-1) {$\cdots$};
\draw[fill=cyan, draw=white]{(1.5,-0.4) circle (2.8pt)};
\draw[fill=cyan, draw=white]{(1.5,-1.6) circle (2.8pt)};
\sixbox{-2};
\draw (1,-4) arc (180:0:0.5 and 0.4);
\draw (1,-2) arc (-180:0:0.5 and 0.4);
\draw (3,-4) -- (3,-2);
\draw (4,-4) -- (4,-2);
\draw (5,-4) -- (5,-2);
\draw (6,-4) -- (6,-2);
\node at (5.5,-3) {$\cdots$};
\draw[fill=cyan, draw=white]{(1.5,-2.4) circle (2.8pt)};
\draw[fill=cyan, draw=white]{(1.5,-3.6) circle (2.8pt)};
\end{tikzpicture}
\end{tabular}\\
&= \delta 
\begin{tabular}[c]{l}
\begin{tikzpicture}
\sixbox{0};
\draw (1,0) arc (-180:0:0.5 and 0.4);
\draw (1,-2) arc (180:0:0.5 and 0.4);
\draw (3,0) -- (3,-2);
\draw (4,0) -- (4,-2);
\draw (5,0) -- (5,-2);
\draw (6,0) -- (6,-2);
\node at (5.5,-1) {$\cdots$};
\draw[fill=cyan, draw=white]{(1.5,-0.4) circle (2.8pt)};
\draw[fill=cyan, draw=white]{(1.5,-1.6) circle (2.8pt)};
\end{tikzpicture}
\end{tabular}\\
&= \delta d_{\overline{1}}, 
\end{align*}
since two decorations on the same edge cancel each other, leaving an undecorated loop that is replaced with $\delta$.

\item Without loss of generality, assume $i<j$ and $i,j\in\{1,\ldots, n-1\}$. We see that

\begin{align*}
d_{i}d_{j} &= 
\begin{tabular}[c]{l}
\begin{tikzpicture}
\longbox{0};
\draw (3,0) arc (-180:0:0.5 and 0.4);
\draw (3,-2) arc (180:0:0.5 and 0.4);
\draw (1,0) -- (1,-2);
\draw (2,0) -- (2,-2);
\draw (7,0) -- (7,-2);
\draw (8,0) -- (8,-2);
\draw (5,0) -- (5,-2);
\draw (6,0) -- (6,-2);
\draw (10,0) -- (10,-2);
\draw (9,0) -- (9,-2);
\node at (1.5,-1) {$\cdots$};
\node at (5.5,-1) {$\cdots$};
\node at (9.5,-1) {$\cdots$};
\longbox{-2};
\draw (7,-4) arc (180:0:0.5 and 0.4);
\draw (7,-2) arc (-180:0:0.5 and 0.4);
\draw (1,-4) -- (1,-2);
\draw (2,-4) -- (2,-2);
\draw (3,-4) -- (3,-2);
\draw (4,-4) -- (4,-2);
\draw (5,-4) -- (5,-2);
\draw (6,-4) -- (6,-2);
\draw (10,-4) -- (10,-2);
\draw (9,-4) -- (9,-2);
\node at (1.5,-3) {$\cdots$};
\node at (5.5,-3) {$\cdots$};
\node at (9.5,-3) {$\cdots$};
\end{tikzpicture}
\end{tabular}\\
&=
\begin{tabular}[c]{l}
\begin{tikzpicture}
\longbox{0};
\draw (7,0) arc (-180:0:0.5 and 0.4);
\draw (7,-2) arc (180:0:0.5 and 0.4);
\draw (3,-2) arc (180:0:0.5 and 0.4);
\draw (3,0) arc (-180:0:0.5 and 0.4);
\draw (1,0) -- (1,-2);
\draw (2,0) -- (2,-2);
\draw (5,0) -- (5,-2);
\draw (6,0) -- (6,-2);
\draw (10,0) -- (10,-2);
\draw (9,0) -- (9,-2);
\node at (1.5,-1) {$\cdots$};
\node at (5.5,-1) {$\cdots$};
\node at (9.5,-1) {$\cdots$};
\end{tikzpicture}
\end{tabular}\\
&= 
\begin{tabular}[c]{l}
\begin{tikzpicture}
\longbox{0};
\draw (7,0) arc (-180:0:0.5 and 0.4);
\draw (7,-2) arc (180:0:0.5 and 0.4);
\draw (1,0) -- (1,-2);
\draw (2,0) -- (2,-2);
\draw (3,0) -- (3,-2);
\draw (4,0) -- (4,-2);
\draw (5,0) -- (5,-2);
\draw (6,0) -- (6,-2);
\draw (10,0) -- (10,-2);
\draw (9,0) -- (9,-2);
\node at (1.5,-1) {$\cdots$};
\node at (5.5,-1) {$\cdots$};
\node at (9.5,-1) {$\cdots$};
\longbox{-2};
\draw (3,-4) arc (180:0:0.5 and 0.4);
\draw (3,-2) arc (-180:0:0.5 and 0.4);
\draw (1,-4) -- (1,-2);
\draw (2,-4) -- (2,-2);
\draw (5,-4) -- (5,-2);
\draw (6,-4) -- (6,-2);
\draw (7,-4) -- (7,-2);
\draw (8,-4) -- (8,-2);
\draw (10,-4) -- (10,-2);
\draw (9,-4) -- (9,-2);
\node at (1.5,-3) {$\cdots$};
\node at (5.5,-3) {$\cdots$};
\node at (9.5,-3) {$\cdots$};
\end{tikzpicture}
\end{tabular}\\
&= ~d_{j}d_{i}. 
\end{align*}

In the case of $d_{\overline{1}}d_j$ when $j>2$, we see that

\begin{align*}
d_{\overline{1}}d_{j} &= 
\begin{tabular}[c]{l}
\begin{tikzpicture}
\ninebox{0};
\draw (1,0) arc (-180:0:0.5 and 0.4);
\draw (1,-2) arc (180:0:0.5 and 0.4);
\draw (7,0) -- (7,-2);
\draw (3,0) -- (3,-2);
\draw (4,0) -- (4,-2);
\draw (5,0) -- (5,-2);
\draw (6,0) -- (6,-2);
\draw (8,0) -- (8,-2);
\node at (3.5,-1) {$\cdots$};
\node at (7.5,-1) {$\cdots$};
\draw[fill=cyan, draw=white]{(1.5,-0.4) circle (2.8pt)};
\draw[fill=cyan, draw=white]{(1.5,-1.6) circle (2.8pt)};
\ninebox{-2};
\draw (5,-4) arc (180:0:0.5 and 0.4);
\draw (5,-2) arc (-180:0:0.5 and 0.4);
\draw (1,-4) -- (1,-2);
\draw (2,-4) -- (2,-2);
\draw (3,-4) -- (3,-2);
\draw (4,-4) -- (4,-2);
\draw (7,-4) -- (7,-2);
\draw (8,-4) -- (8,-2);
\node at (3.5,-3) {$\cdots$};
\node at (7.5,-3) {$\cdots$};
\end{tikzpicture}
\end{tabular}\\
&=
\begin{tabular}[c]{l}
\begin{tikzpicture}
\ninebox{0};
\draw (1,0) arc (-180:0:0.5 and 0.4);
\draw (1,-2) arc (180:0:0.5 and 0.4);
\draw (5,0) arc (-180:0:0.5 and 0.4);
\draw (5,-2) arc (180:0:0.5 and 0.4);
\draw (7,0) -- (7,-2);
\draw (3,0) -- (3,-2);
\draw (4,0) -- (4,-2);
\draw (8,0) -- (8,-2);
\node at (3.5,-1) {$\cdots$};
\node at (7.5,-1) {$\cdots$};
\draw[fill=cyan, draw=white]{(1.5,-0.4) circle (2.8pt)};
\draw[fill=cyan, draw=white]{(1.5,-1.6) circle (2.8pt)};
\end{tikzpicture}
\end{tabular}\\
&= 
\begin{tabular}[c]{l}
\begin{tikzpicture}
\ninebox{0};
\draw (5,0) arc (-180:0:0.5 and 0.4);
\draw (5,-2) arc (180:0:0.5 and 0.4);
\draw (7,0) -- (7,-2);
\draw (3,0) -- (3,-2);
\draw (4,0) -- (4,-2);
\draw (1,0) -- (1,-2);
\draw (2,0) -- (2,-2);
\draw (8,0) -- (8,-2);
\node at (3.5,-1) {$\cdots$};
\node at (7.5,-1) {$\cdots$};
\ninebox{-2};
\draw (1,-4) arc (180:0:0.5 and 0.4);
\draw (1,-2) arc (-180:0:0.5 and 0.4);
\draw (5,-4) -- (5,-2);
\draw (6,-4) -- (6,-2);
\draw (3,-4) -- (3,-2);
\draw (4,-4) -- (4,-2);
\draw (7,-4) -- (7,-2);
\draw (8,-4) -- (8,-2);
\node at (3.5,-3) {$\cdots$};
\node at (7.5,-3) {$\cdots$};
\draw[fill=cyan, draw=white]{(1.5,-2.4) circle (2.8pt)};
\draw[fill=cyan, draw=white]{(1.5,-3.6) circle (2.8pt)};
\end{tikzpicture}
\end{tabular}\\
&= ~d_{j}d_{\overline{1}}. 
\end{align*}

Since $s_{\overline{1}}$ is not connected to $s_1$, we will also consider the case of $d_{\overline{1}}d_1$. We see that

\begin{align*}
d_{\overline{1}}d_1 &=
\begin{tabular}[c]{l}
\begin{tikzpicture}
\sixbox{0};
\draw (1,0) arc (-180:0:0.5 and 0.4);
\draw (1,-2) arc (180:0:0.5 and 0.4);
\draw (3,0) -- (3,-2);
\draw (4,0) -- (4,-2);
\draw (5,0) -- (5,-2);
\draw (6,0) -- (6,-2);
\node at (5.5,-1) {$\cdots$};
\draw[fill=cyan, draw=white]{(1.5,-0.4) circle (2.8pt)};
\draw[fill=cyan, draw=white]{(1.5,-1.6) circle (2.8pt)};
\sixbox{-2};
\draw (1,-4) arc (180:0:0.5 and 0.4);
\draw (1,-2) arc (-180:0:0.5 and 0.4);
\draw (3,-4) -- (3,-2);
\draw (4,-4) -- (4,-2);
\draw (5,-4) -- (5,-2);
\draw (6,-4) -- (6,-2);
\node at (5.5,-3) {$\cdots$};
\end{tikzpicture}
\end{tabular}\\
&= 
\begin{tabular}[c]{l}
\begin{tikzpicture}
\sixbox{0};
\draw (1,0) arc (-180:0:0.5 and 0.4);
\draw (1,-2) arc (180:0:0.5 and 0.4);
\draw (3,0) -- (3,-2);
\draw (4,0) -- (4,-2);
\draw (5,0) -- (5,-2);
\draw (6,0) -- (6,-2);
\node at (5.5,-1) {$\cdots$};
\dlp{1.5}{-1};
\end{tikzpicture}
\end{tabular}\\
&=
\begin{tabular}[c]{l}
\begin{tikzpicture}
\sixbox{0};
\draw (1,0) arc (-180:0:0.5 and 0.4);
\draw (1,-2) arc (180:0:0.5 and 0.4);
\draw (3,0) -- (3,-2);
\draw (4,0) -- (4,-2);
\draw (5,0) -- (5,-2);
\draw (6,0) -- (6,-2);
\node at (5.5,-1) {$\cdots$};
\sixbox{-2};
\draw (1,-4) arc (180:0:0.5 and 0.4);
\draw (1,-2) arc (-180:0:0.5 and 0.4);
\draw (3,-4) -- (3,-2);
\draw (4,-4) -- (4,-2);
\draw (5,-4) -- (5,-2);
\draw (6,-4) -- (6,-2);
\node at (5.5,-3) {$\cdots$};
\draw[fill=cyan, draw=white]{(1.5,-2.4) circle (2.8pt)};
\draw[fill=cyan, draw=white]{(1.5,-3.6) circle (2.8pt)};
\end{tikzpicture}
\end{tabular}\\
&= d_1d_{\overline{1}}, 
\end{align*}

since any edge loses its decorations in the presence of a decorated loop.

\item Without loss of generality, assume $j=i+1$ and $i,j\in\{1,\ldots, n-1\}$. We see that

\begin{align*}
d_{i}d_{j}d_{i} &= 
\begin{tabular}[c]{l}
\begin{tikzpicture}
\eightbox{0};
\draw (3,0) arc (-180:0:0.5 and 0.4);
\draw (3,-2) arc (180:0:0.5 and 0.4);
\draw (1,0) -- (1,-2);
\draw (2,0) -- (2,-2);
\draw (7,0) -- (7,-2);
\draw (5,0) -- (5,-2);
\draw (6,0) -- (6,-2);
\node at (1.5,-1) {$\cdots$};
\node at (6.5,-1) {$\cdots$};
\eightbox{-2};
\draw (4,-4) arc (180:0:0.5 and 0.4);
\draw (4,-2) arc (-180:0:0.5 and 0.4);
\draw (1,-4) -- (1,-2);
\draw (2,-4) -- (2,-2);
\draw (3,-4) -- (3,-2);
\draw (6,-4) -- (6,-2);
\draw (7,-4) -- (7,-2);
\node at (1.5,-3) {$\cdots$};
\node at (6.5,-3) {$\cdots$};
\eightbox{-4};
\draw (3,-4) arc (-180:0:0.5 and 0.4);
\draw (3,-6) arc (180:0:0.5 and 0.4);
\draw (1,-4) -- (1,-6);
\draw (2,-4) -- (2,-6);
\draw (7,-4) -- (7,-6);
\draw (5,-4) -- (5,-6);
\draw (6,-4) -- (6,-6);
\node at (1.5,-5) {$\cdots$};
\node at (6.5,-5) {$\cdots$};
\end{tikzpicture}
\end{tabular}\\
&=
\begin{tabular}[c]{l}
\begin{tikzpicture}
\eightbox{0};
\draw (3,-2) arc (180:0:0.5 and 0.4);
\draw (3,0) arc (-180:0:0.5 and 0.4);
\draw (1,0) -- (1,-2);
\draw (2,0) -- (2,-2);
\draw (5,0) -- (5,-2);
\draw (6,0) -- (6,-2);
\draw (7,0) -- (7,-2);
\node at (1.5,-1) {$\cdots$};
\node at (6.5,-1) {$\cdots$};
\end{tikzpicture}
\end{tabular}\\
&= ~d_i
\end{align*}
Since $s_{\overline{1}}$ is only connected to $s_2$ in the Coxeter graph, we will consider two more cases. In the case $d_{\overline{1}}d_2d_{\overline{1}}$, we see that
\begin{align*}
d_{\overline{1}}d_2d_{\overline{1}} &=
\begin{tabular}[c]{l}
\begin{tikzpicture}[scale=0.93]
\sixbox{0};
\draw (1,0) arc (-180:0:0.5 and 0.4);
\draw (1,-2) arc (180:0:0.5 and 0.4);
\draw (3,0) -- (3,-2);
\draw (4,0) -- (4,-2);
\draw (5,0) -- (5,-2);
\draw (6,0) -- (6,-2);
\node at (5.5,-1) {$\cdots$};
\draw[fill=cyan, draw=white]{(1.5,-0.4) circle (2.8pt)};
\draw[fill=cyan, draw=white]{(1.5,-1.6) circle (2.8pt)};
\sixbox{-2};
\draw (2,-4) arc (180:0:0.5 and 0.4);
\draw (2,-2) arc (-180:0:0.5 and 0.4);
\draw (1,-4) -- (1,-2);
\draw (4,-4) -- (4,-2);
\draw (5,-4) -- (5,-2);
\draw (6,-4) -- (6,-2);
\node at (5.5,-3) {$\cdots$};
\sixbox{-4};
\draw (1,-4) arc (-180:0:0.5 and 0.4);
\draw (1,-6) arc (180:0:0.5 and 0.4);
\draw (3,-4) -- (3,-6);
\draw (4,-4) -- (4,-6);
\draw (5,-4) -- (5,-6);
\draw (6,-4) -- (6,-6);
\node at (5.5,-5) {$\cdots$};
\draw[fill=cyan, draw=white]{(1.5,-4.4) circle (2.8pt)};
\draw[fill=cyan, draw=white]{(1.5,-5.6) circle (2.8pt)};
\end{tikzpicture}
\end{tabular}\\
&= 
\begin{tabular}[c]{l}
\begin{tikzpicture}[scale=0.93]
\sixbox{0};
\draw (1,0) arc (-180:0:0.5 and 0.4);
\draw (1,-2) arc (180:0:0.5 and 0.4);
\draw (3,0) -- (3,-2);
\draw (4,0) -- (4,-2);
\draw (5,0) -- (5,-2);
\draw (6,0) -- (6,-2);
\node at (5.5,-1) {$\cdots$};
\draw[fill=cyan, draw=white]{(1.5,-0.4) circle (2.8pt)};
\draw[fill=cyan, draw=white]{(1.5,-1.6) circle (2.8pt)};
\end{tikzpicture}
\end{tabular}\\
&= d_{\overline{1}}, 
\end{align*}
since two decorations on the same edge cancel. Then, in the case of $d_2d_{\overline{1}}d_2$, we see that
\begin{align*}
d_2d_{\overline{1}}d_2 &=
\begin{tabular}[c]{l}
\begin{tikzpicture}[scale=0.93]
\sixbox{0};
\draw (2,0) arc (-180:0:0.5 and 0.4);
\draw (2,-2) arc (180:0:0.5 and 0.4);
\draw (1,0) -- (1,-2);
\draw (4,0) -- (4,-2);
\draw (5,0) -- (5,-2);
\draw (6,0) -- (6,-2);
\node at (5.5,-1) {$\cdots$};
\sixbox{-2};
\draw (1,-4) arc (180:0:0.5 and 0.4);
\draw (1,-2) arc (-180:0:0.5 and 0.4);
\draw (3,-4) -- (3,-2);
\draw (4,-4) -- (4,-2);
\draw (5,-4) -- (5,-2);
\draw (6,-4) -- (6,-2);
\node at (5.5,-3) {$\cdots$};
\draw[fill=cyan, draw=white]{(1.5,-2.4) circle (2.8pt)};
\draw[fill=cyan, draw=white]{(1.5,-3.6) circle (2.8pt)};
\sixbox{-4};
\draw (2,-4) arc (-180:0:0.5 and 0.4);
\draw (2,-6) arc (180:0:0.5 and 0.4);
\draw (1,-4) -- (1,-6);
\draw (4,-4) -- (4,-6);
\draw (5,-4) -- (5,-6);
\draw (6,-4) -- (6,-6);
\node at (5.5,-5) {$\cdots$};
\end{tikzpicture}
\end{tabular}\\
&= 
\begin{tabular}[c]{l}
\begin{tikzpicture}[scale=0.93]
\sixbox{0};
\draw (2,0) arc (-180:0:0.5 and 0.4);
\draw (2,-2) arc (180:0:0.5 and 0.4);
\draw (1,0) -- (1,-2);
\draw (4,0) -- (4,-2);
\draw (5,0) -- (5,-2);
\draw (6,0) -- (6,-2);
\node at (5.5,-1) {$\cdots$};
\end{tikzpicture}
\end{tabular}\\
&= d_{2}, 
\end{align*}
since two decorations on the same edge cancel.
\end{enumerate}
\end{proof}

The next proposition follows quickly from Proposition~\ref{rem:D relations hold} since $\DTL(D_{n})$ satisfies the relations given in Theorem~\ref{def:TL(D)}.

\begin{proposition}\label{prop:surjective homomorphism}
The map $\theta: \TL(D_{n}) \to \DTL(D_{n})$ determined by $\theta(b_{i})=d_{i}$ is a well-defined surjective $\Z[\delta]$-algebra homomorphism. \qed
\end{proposition}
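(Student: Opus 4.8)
The plan is to invoke the universal property of the presentation of $\TL(D_n)$ recorded in Theorem~\ref{def:TL(D)}. Since $\TL(D_n)$ is the unital $\Z[\delta]$-algebra generated by $b_{\overline{1}}, b_1, b_2, \dots, b_{n-1}$ subject \emph{only} to the relations (1)--(3) of that theorem, producing a $\Z[\delta]$-algebra homomorphism out of $\TL(D_n)$ amounts to specifying images for the generators and checking that those images satisfy the three defining relations. I would set $\theta(b_i) = d_i$ for each $i \in \{\overline{1}, 1, 2, \dots, n-1\}$ and extend multiplicatively and $\Z[\delta]$-linearly to the free algebra, then argue that this map factors through the relation ideal.

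For well-definedness, I would appeal directly to Proposition~\ref{rem:D relations hold}, which establishes that the simple diagrams $d_{\overline{1}}, d_1, \dots, d_{n-1}$ inside $\DTL(D_n)$ satisfy $d_i^2 = \delta d_i$ for all $i$, $d_i d_j = d_j d_i$ when $s_i$ and $s_j$ are not connected in the Coxeter graph of type $D_n$, and $d_i d_j d_i = d_i$ when they are connected. These are precisely the relations defining $\TL(D_n)$, so the assignment $b_i \mapsto d_i$ respects every defining relation and therefore descends to a well-defined $\Z[\delta]$-algebra homomorphism $\theta : \TL(D_n) \to \DTL(D_n)$.

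For surjectivity, I would note that by Definition~\ref{def:D_n} the algebra $\DTL(D_n)$ is, by construction, generated as a unital $\Z[\delta]$-algebra by $d_{\overline{1}}, d_1, \dots, d_{n-1}$. Each of these generators lies in the image of $\theta$, since $d_i = \theta(b_i)$, and the image of an algebra homomorphism is a subalgebra; hence the image of $\theta$ contains the unital $\Z[\delta]$-subalgebra generated by the $d_i$, which is all of $\DTL(D_n)$. Thus $\theta$ is surjective.

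I do not expect a genuine obstacle here: the substantive work is entirely carried by Proposition~\ref{rem:D relations hold}. The only point requiring any care is that one must use the concrete presentation of $\TL(D_n)$ by generators and relations (Theorem~\ref{def:TL(D)}, due to Green) rather than its original definition as the Hecke-algebra quotient $\H(D_n)/J(D_n)$; once that presentation is available, the map is defined on the free algebra and passes to the quotient automatically.
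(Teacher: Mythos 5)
Your argument is correct and follows essentially the same route as the paper: the paper likewise deduces the proposition from Proposition~\ref{rem:D relations hold} together with the presentation of $\TL(D_n)$ in Theorem~\ref{def:TL(D)}, with surjectivity coming from the fact that $\DTL(D_n)$ is by definition generated by the $d_i$. Your write-up merely makes explicit the appeal to the universal property of the presentation, which the paper leaves implicit.
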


In order to show that $\theta$ is an isomorphism, we need to first define $D$-admissible diagrams.

\end{section}


\begin{section}{$D$-admissible diagrams of type I and type II}\label{sec:admissible}


It turns out that the set of $D$-admissible diagrams form a basis for $\DTL(D_n)$.  Our definition of $D$-admissible comes from Theorem 4.2 in~\cite{Green1998}.

\begin{definition}\label{def:admissible}
Let $d$ be an irreducible (i.e., no relations to apply) L-decorated diagram.  Then we say that $d$ is \emph{$D$-admissible of type I} or \emph{$D$-admissible of type II} depending on which of the two mutually exclusive conditions below it satisfies. 

\begin{enumerate}[leftmargin=0.5in]
\item[(I)] \label{type1} The diagram contains one loop which is decorated, and no other loops or decorations.
\item[(II)] \label{type2} The diagram contains no loops and the total number of decorations is even.
\end{enumerate}
\end{definition}

\begin{figure}[!ht]
\centering
\begin{subfigure}[b]{0.4\textwidth}
\begin{tikzpicture}
\fivebox{0};
\draw (1,0) arc (-180:0:0.5 and 0.4);
\draw (4,0) arc (-180:0:0.5 and 0.4);
\draw (3,0) -- (3,-2);
\draw (1,-2)  arc (180:0:0.5 and 0.4) ;
\draw (4,-2)  arc (180:0:0.5 and 0.4) ;
\dlp{1}{-1};
\end{tikzpicture}
\caption{Type I}
\label{fig:type1diag}
\end{subfigure}
\quad
\begin{subfigure}[b]{0.4\textwidth}
\begin{tikzpicture}
\fivebox{0};
\draw (1,0) arc (-180:0:0.5 and 0.4);
\draw (4,0) arc (-180:0:0.5 and 0.4);
\draw (3,0) -- (5,-2)
	node[fill=cyan,  pos=0.5, shape=circle, inner sep=1.8pt, minimum size=2pt]{};
\draw (1,-2)  arc (180:0:0.5 and 0.4) ;
\draw (3,-2)  arc (180:0:0.5 and 0.4) ;
\draw[fill=cyan, draw=white]{(1.5,-1.6) circle (2.8pt)};
\end{tikzpicture}
\caption{Type II}
\label{fig:type2diag}
\end{subfigure}
\caption{$D$-admissible diagrams.}
\label{fig:diag}
\end{figure}
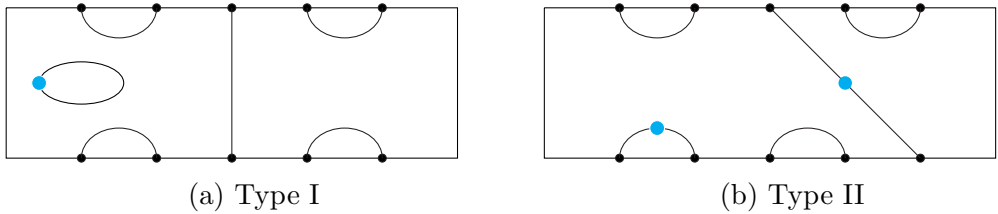

\begin{example}
Figure~\ref{fig:diag} shows an example of a type I diagram and a type II diagram.
\end{example}

\begin{proposition}[Green~\cite{Green1998a}]\label{isomorphic}
In type $D_{n}$, the number of $D$-admissible diagrams of type~I is $C(n)-1$, and the number of type~II is $\frac{1}{2}{2n\choose n}$. Therefore, the total number of $D-$admissible diagrams is $\(\frac{n+3}{2}\)C(n)-1$. \qed
\end{proposition}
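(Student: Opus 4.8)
The plan is to count the type~I and the type~II $D$-admissible diagrams separately and then add the two numbers, the addition being an elementary Catalan-number identity. The structural fact that drives both counts is this: each simple diagram $d_{\overline 1}, d_1, \dots, d_{n-1}$ joins node $n+1$ to node $(n+1)'$ by a straight undecorated propagating edge, and this feature survives concatenation, so every diagram lying in $\DTL(D_n)$ --- in particular every $D$-admissible diagram --- has node $n+1$ joined to node $(n+1)'$ by an undecorated propagating edge. Deleting that strand identifies a $D$-admissible diagram with a decorated loop-free Temperley--Lieb diagram on $n$ nodes per face, where I must keep in mind condition (D0): when $\a(d)=0$ no edge --- and in particular no loop --- may be decorated. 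In the converse direction I also need Green's characterisation (Theorem~4.2 of~\cite{Green1998}) of exactly which irreducible L-decorated diagrams lie in $\DTL(D_n)$, so that each admissible diagram I describe really does occur.

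For type~I: such a diagram is a loop-free $n$-diagram $d_0$ carrying a single decorated loop and nothing else. The complement of $d_0$ in the box is a disjoint union of discs, exactly one of which, say $R$, meets the left wall; since the loop must be left-exposed (condition (D2)) it is forced to lie in $R$, and two loops in $R$ are isotopic within $R$, so the diagram is determined by $d_0$. Condition (D0) forces $\a(d_0)\ge 1$, and conversely --- granting Green's Theorem~4.2 --- every loop-free $n$-diagram with $\a\ge 1$ arises in this way. There are $C(n)$ loop-free Temperley--Lieb diagrams on $2n$ nodes and precisely one of them (the identity) has $\a=0$, so the number of type~I diagrams is $C(n)-1$.

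For type~II: such a diagram is a loop-free $n$-diagram $d_0$ with a decoration placed on each edge of some subset $S$ of the set $\mathrm{LE}(d_0)$ of left-exposed edges of $d_0$ --- the edges lying on the boundary of the disc $R$ --- subject to $|S|$ being even (and, by (D0), to $S=\emptyset$ when $\a(d_0)=0$, in which case the unique left-exposed edge is the leftmost propagating strand). Since $R$ is simply connected, every even $S$ can in fact be realised simultaneously. Writing $L(d_0)=|\mathrm{LE}(d_0)|$, the number of even subsets of an $L(d_0)$-element set is $2^{L(d_0)-1}$ (which also yields the correct value $1$ in the identity case, where $L=1$), so the number of type~II diagrams is
\[
\sum_{d_0}2^{L(d_0)-1}=\frac{1}{2}\sum_{d_0}2^{L(d_0)},
\]
the sums being over all loop-free $n$-diagrams. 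The key input is then the identity $\sum_{d_0}2^{L(d_0)}=\binom{2n}{n}$: the left-hand side counts loop-free $n$-diagrams each decorated on an arbitrary subset of its left-exposed edges, and this number is a standard one (it is the dimension of the blob algebra of rank $n$, and it may also be obtained directly by a bijection with the $\binom{2n}{n}$ length-$2n$ binary words with $n$ ones recording which edges carry a decoration as one traverses the left boundary of the diagram). Granting it, the type~II count is $\frac{1}{2}\binom{2n}{n}$. Pinning down the combinatorics of left-exposed edges precisely, so that this count is exact while no admissible diagram is missed or double-counted (which is exactly where Green's Theorem~4.2 is essential), is the main obstacle.

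Finally, adding the two counts and using $C(n)=\frac{1}{n+1}\binom{2n}{n}$, so that $\frac{1}{2}\binom{2n}{n}=\frac{n+1}{2}C(n)$, gives
\[
\bigl(C(n)-1\bigr)+\frac{1}{2}\binom{2n}{n}=\bigl(C(n)-1\bigr)+\frac{n+1}{2}C(n)=\frac{n+3}{2}C(n)-1,
\]
which is the asserted total. Everything apart from the type~II enumeration is structural bookkeeping or elementary algebra; the type~II enumeration is the substantive step, and for a short proof one may instead simply invoke the two sub-counts from~\cite{Green1998a}.
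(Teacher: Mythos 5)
The thesis itself gives no proof of this proposition---it is quoted from Green~\cite{Green1998a} and stated without argument---so your proof is necessarily a different route; in substance it is the standard enumeration underlying Green's count, and it is correct. The structural reductions are right: the strand joining $n+1$ to $(n+1)'$ is inert in every product of the simple diagrams, so admissible diagrams live on $n$ effective nodes per face; a type~I diagram is determined by its loop-free undecorated part $d_0$, which by (D0) cannot be the identity, giving $C(n)-1$; and a type~II diagram is a loop-free $d_0$ carrying one decoration on each edge of an even-sized subset of its left-exposed edges. The parity-halving step $2^{L(d_0)-1}$ is legitimate because every loop-free diagram has at least one left-exposed edge (the edge incident to node $1$), and it correctly returns $1$ for the identity, where (D0) forbids any decoration. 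The closing algebra $(C(n)-1)+\tfrac{1}{2}\binom{2n}{n}=\tfrac{n+3}{2}C(n)-1$ checks, and the two sub-counts agree with small cases (e.g.\ $n=3$ gives $4+10=14$). Two ingredients remain citations rather than proofs, as you yourself flag: the identity $\sum_{d_0}2^{L(d_0)}=\binom{2n}{n}$ (the blob-algebra dimension; your binary-word bijection is only gestured at), and Green's Theorem~4.2, which is genuinely needed because Definition~\ref{def:admissible} as stated in the thesis does not by itself impose the inert rightmost strand---counting all irreducible L-decorated $(n+1)$-diagrams satisfying (I) or (II) would give the wrong answer. Since the proposition is itself an imported result, deferring these two points to the literature is reasonable, but a fully self-contained proof would have to supply them.
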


\begin{proposition}[Green~\cite{Green1998a}]
The $D$-admissible diagrams form a basis of $\DTL(D_{n})$ and thus $\dim\left(\DTL(D_{n})\right)=\(\frac{n+3}{2}\)C(n)-1$. \qed
\end{proposition}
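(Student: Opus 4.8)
The plan is to bound $\dim\DTL(D_n)$ from two sides. For the upper bound, Proposition~\ref{prop:surjective homomorphism} supplies a surjective $\Z[\delta]$-algebra homomorphism $\theta\colon\TL(D_n)\to\DTL(D_n)$, so $\dim\DTL(D_n)\le\dim\TL(D_n)=\(\frac{n+3}{2}\)C(n)-1$ by the dimension formula of Fan~\cite{Fan1997}. For the lower bound, the idea is to show that the $D$-admissible diagrams are $\(\frac{n+3}{2}\)C(n)-1$ linearly independent elements of $\DTL(D_n)$; since by Proposition~\ref{isomorphic} there are exactly that many, this forces equality and shows at the same time that they form a basis.

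Linear independence comes essentially for free. A type~I diagram (one decorated loop and nothing else) and a type~II diagram (no loops, each edge carrying at most one decoration) are, by Remark~\ref{diagbasis}, among the distinguished basis diagrams of $\widehat{\P}^{L}_{n+1}(\bcirc)$, and distinct $D$-admissible diagrams remain distinct there; hence they are linearly independent in $\widehat{\P}^{L}_{n+1}(\bcirc)$ and a fortiori in its subalgebra $\DTL(D_n)$. So the real content is to prove that every $D$-admissible diagram actually lies in $\DTL(D_n)$, i.e.\ is expressible as a product of the simple diagrams $d_{\overline1},d_1,\dots,d_{n-1}$.

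I would establish this by induction on the complexity of the diagram, using the heap dictionary of Section~\ref{sec:Dheaps} to read off a suitable fully commutative word. An undecorated loop-free diagram in $\widehat{\P}^L_{n+1}(\bcirc)$ equals $d_w$ for some $w\in\FC(A_{n-1})\subseteq\FC(D_n)$ by Kauffman's theorem (Theorem~\ref{kauff}). A type~I diagram is obtained from such a diagram by introducing the sub-product $d_{\overline1}d_1$, which---as the computation of $d_{\overline1}d_1$ in the proof of Proposition~\ref{rem:D relations hold} shows---creates a decorated loop, after which the third relation of Figure~\ref{fig:relations} strips the decorations from every other edge; this is exactly why type~I diagrams correspond to elements having $s_{\overline1}s_1$ as a subword. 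A type~II diagram carrying decorations (necessarily pushed to the left wall by condition (D2)) should factor as a product involving one or more $d_{\overline1}$'s together with a $D$-admissible diagram of strictly smaller complexity, using that $d_{\overline1}$ contributes exactly the pair of decorations appearing in Proposition~\ref{rem:D relations hold}; iterating reduces to the undecorated case. In each case one must check that the resulting word is reduced and fully commutative, equivalently that its heap avoids the forbidden configurations of Lemma~\ref{lem:notFC3Dheaps}.

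The hard part is precisely this last step: verifying that \emph{every} $D$-admissible diagram is reached by some product of simple diagrams, so that all $\(\frac{n+3}{2}\)C(n)-1$ of them genuinely sit inside $\DTL(D_n)$. This is the diagrammatic input of Green (Theorem~4.2 of~\cite{Green1998}, with the count in~\cite{Green1998a}), and the delicate points are controlling how decorations and loops behave under concatenation and confirming that left-exposedness~(D2) never obstructs realizing an admissible decoration pattern. Once that is in place, combining the upper bound, the linear independence, and the count of Proposition~\ref{isomorphic} yields $\dim\DTL(D_n)=\(\frac{n+3}{2}\)C(n)-1$ with the $D$-admissible diagrams as a basis.
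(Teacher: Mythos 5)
Be aware that the paper does not actually prove this proposition: it is quoted from Green~\cite{Green1998a} with no internal argument, so there is nothing in the text to compare your write-up against except the citation itself. Your outline is a reasonable way to organize such a proof, and you have correctly isolated where the real work lies: the linear-independence half is essentially free from Remark~\ref{diagbasis}, and everything else reduces to the single claim that every $D$-admissible diagram is realized as a product of the simple diagrams $d_{\overline{1}},d_1,\dots,d_{n-1}$.

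Two caveats. First, that single claim is not a technical afterthought---it \emph{is} the substantive content of the proposition, together with the companion fact that $w\mapsto d_w$ is injective on $\FC(D_n)$, so that the $N=\left(\tfrac{n+3}{2}\right)C(n)-1$ spanning elements $d_w$ biject with the $N$ admissible diagrams counted in Proposition~\ref{isomorphic}. You sketch an induction on decorations and loops but do not carry it out, and you ultimately defer to Green's Theorem~4.2; so what you have is an annotated citation rather than a proof. That is no worse than what the paper does, but it should be labelled as such. Second, your upper-bound step needs more care because $\Z[\delta]$ is not a field: the surjection $\theta$ of Proposition~\ref{prop:surjective homomorphism} gives that $\DTL(D_n)$ is generated as a $\Z[\delta]$-module by the $N$ elements $d_w$, but a module over an integral domain can contain $N$ linearly independent elements that do not generate it (e.g.\ $2$ inside $\Z$), so ``$N$ independent elements plus a spanning set of size $N$'' does not by itself yield a basis. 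The argument closes only once you know the spanning set $\{d_w : w\in\FC(D_n)\}$ coincides, element by element, with the set of $D$-admissible diagrams---which is again exactly Green's bijection. The two-sided ``dimension bound'' phrasing obscures the fact that this identification, not a rank count, is what finishes the proof.
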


\begin{theorem}[Green~\cite{Green1998a}]
As $\Z[\delta]$-algebras, the diagram algebra, $\DTL(D_{n})$, is isomorphic to $\TL(D_{n})$ under $\theta$ as defined in Proposition~\ref{prop:surjective homomorphism}. Moreover, the monomial basis for $\TL(D_{n})$ is in natural bijection with the $D$-admissible diagrams. \qed
\end{theorem}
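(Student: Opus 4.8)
The plan is to deduce the isomorphism from a rank count and then identify the images of the monomial basis diagrammatically. By Proposition~\ref{prop:surjective homomorphism}, $\theta$ is a surjective homomorphism of $\Z[\delta]$-algebras, and because the defining relations of $\TL(D_n)$ from Theorem~\ref{def:TL(D)} hold in $\DTL(D_n)$ (Proposition~\ref{rem:D relations hold}), the element $d_w:=d_{x_1}\cdots d_{x_k}$ attached to a reduced expression $\w=s_{x_1}\cdots s_{x_k}$ of $w\in\FC(D_n)$ is independent of the chosen reduced expression and satisfies $\theta(b_w)=d_w$. Now $\TL(D_n)$ is a free $\Z[\delta]$-module of rank $N:=\left(\frac{n+3}{2}\right)C(n)-1$ by Fan's dimension formula, while $\DTL(D_n)$ is free of the same rank $N$ because the $D$-admissible diagrams form a basis (Green's propositions quoted above, including Proposition~\ref{isomorphic} for the count). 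A surjective homomorphism between free modules of equal finite rank over a commutative ring is an isomorphism: with respect to bases it is given by a matrix $A\in M_N(\Z[\delta])$, surjectivity produces $B$ with $AB=I_N$, whence $\det A\in\Z[\delta]^{\times}=\{\pm1\}$ and $A$ is invertible. Hence $\theta$ is an isomorphism.

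For the second assertion we must see that $\theta$ carries the monomial basis $\{b_w:w\in\FC(D_n)\}$ bijectively onto the $D$-admissible diagrams. Compute $d_w=d_{x_1}\cdots d_{x_k}$ inside $\widehat{\P}^{L}_{n+1}(\bcirc)$, whose $\Z[\delta]$-basis consists of the irreducible $L$-decorated diagrams (Remark~\ref{diagbasis}); since concatenating diagrams produces a single diagram and the relations of Figure~\ref{fig:relations} only ever introduce powers of $\delta$, never sums, we get $d_w=\delta^{\,j(w)}d'_w$ for a unique irreducible $L$-decorated diagram $d'_w$ and some $j(w)\ge 0$. The $D$-admissible diagrams form a sub-collection of this basis whose $\Z[\delta]$-span is $\DTL(D_n)$, so every element of $\DTL(D_n)$ is supported on that sub-collection; as $d_w\in\DTL(D_n)$ is a nonzero multiple of the single basis vector $d'_w$, the diagram $d'_w$ is itself $D$-admissible. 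Because $\theta$ is an isomorphism, $\{d_w\}_{w\in\FC(D_n)}$ is a $\Z[\delta]$-basis of $\DTL(D_n)$; in particular its members are distinct, so $w\mapsto d'_w$ is injective ($d'_w=d'_{w'}$ would yield the dependence $\delta^{\,j(w')}d_w=\delta^{\,j(w)}d_{w'}$ among distinct basis vectors). Thus $w\mapsto d'_w$ is an injection of $\FC(D_n)$ into the set of $D$-admissible diagrams, and as both sets have $N$ elements it is a bijection, so $\{d'_w\}_w$ is exactly the $D$-admissible basis. Finally, $\{d_w\}_w=\{\delta^{\,j(w)}d'_w\}_w$ is a basis of the free module $\bigoplus_w\Z[\delta]\,d'_w$, which forces each $\delta^{\,j(w)}$ to be a unit of $\Z[\delta]$, hence $j(w)=0$ and $d_w=d'_w$. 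Therefore $\theta$ sends $b_w$ to the $D$-admissible diagram $d_w$, and $w\mapsto d_w$ is the asserted natural bijection.

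I expect the main obstacle to lie not in the argument above but in the results it quotes: Fan's computation of $\dim\TL(D_n)$ and, especially, Green's theorem that the $D$-admissible diagrams are $\Z[\delta]$-linearly independent in $\DTL(D_n)$ --- together with the well-definedness and surjectivity already supplied by Proposition~\ref{rem:D relations hold} and Proposition~\ref{prop:surjective homomorphism} --- are where the real work sits. Granting those, the only delicate point is the bookkeeping that ties together the two bases (of $\widehat{\P}^{L}_{n+1}(\bcirc)$ and of $\DTL(D_n)$) via the structural reduction $d_w=\delta^{\,j(w)}d'_w$, so as to conclude both that $d'_w$ is $D$-admissible and that $j(w)=0$. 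A more computational alternative would replace this by a direct case analysis over the type~I and type~II heaps of $\FC(D_n)$ (Figures~\ref{fig:typeone}--\ref{fig:typetwo}), describing the diagram of each reduced expression explicitly; that is essentially Green's original route in~\cite{Green1998a}, but it is longer and no more transparent than the counting argument.
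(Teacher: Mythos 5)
The paper does not prove this theorem at all: it is stated with a \qed and outsourced entirely to Green~\cite{Green1998a}. Your proposal therefore cannot match ``the paper's approach''; what it does instead is assemble a genuine derivation from the other quoted facts, and that derivation is sound. The two pillars are exactly the ones you identify: surjectivity of $\theta$ (Proposition~\ref{prop:surjective homomorphism}) plus the equality of ranks coming from Fan's formula on one side and Proposition~\ref{isomorphic} together with the basis statement for the $D$-admissible diagrams on the other, after which the determinant trick (a surjection of free $\Z[\delta]$-modules of equal finite rank splits, so the matrix has a right inverse, so its determinant is a unit of $\Z[\delta]=\{\pm1\}[\delta]$, so it is invertible) closes the first claim. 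Your handling of the second claim is the more delicate part and it holds up: concatenation of single diagrams followed by the local relations of Figure~\ref{fig:relations} never produces a sum, so $d_w=\delta^{j(w)}d'_w$ for a single irreducible $L$-decorated basis element of $\widehat{\P}^{L}_{n+1}(\bcirc)$ (Remark~\ref{diagbasis}); support considerations force $d'_w$ to be $D$-admissible, linear independence of $\{d_w\}$ forces $w\mapsto d'_w$ to be injective, the count forces it to be bijective, and invertibility of the diagonal change-of-basis matrix forces $j(w)=0$. The one caveat worth flagging is logical rather than mathematical: your argument leans on the proposition that the $D$-admissible diagrams are a \emph{basis} of $\DTL(D_n)$, which in Green's own development is proved in tandem with (not prior to) the isomorphism theorem, so your proof is a correct rearrangement of the quoted results rather than an independent route to them --- but you say as much yourself, and within the paper's presentation, where that proposition is stated first as a citable fact, the deduction is legitimate. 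What your counting argument buys over Green's explicit construction is brevity; what it gives up is any concrete description of \emph{which} diagram $d_w$ is, which the paper partially recovers later in Theorem~\ref{index} via simple representations.
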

 
\end{section}


\chapter{A cellular quotient}


In this chapter, we construct a diagrammatic representation of a quotient of $\TL(D_n)$ that will be used to show that this particular quotient is cellular.


\section{Pair-free Temperley-Lieb algebra}\label{sec:pairfree}


We will define the \emph{pair-free Temperley--Lieb algebra of type $D_{n}$}, denoted $\PFTL(D_{n})$, to be the quotient of $\TL(D_n)$ with the additional relation $b_1b_{\overline{1}}=0$. 
Since the monomial basis forms a basis for $\TL(D_n)$ and the relation $b_1b_{\overline{1}}=0$ eliminates the monomials indexed by the type I heaps but has no impact on the monomials indexed by the type II heaps, the following proposition holds.

\begin{proposition}
Let $\b_w$ be the image of $b_w$ in the quotient $\PFTL(D_n)$. Then
\[
\{\b_w: H(w)\text{ is of type II}\}
\]
is a basis for $\PFTL(D_n)$. \qed
\end{proposition}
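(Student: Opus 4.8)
The plan is to show that passing to the quotient by the single relation $b_1b_{\overline{1}}=0$ kills precisely the span of the type I monomial basis elements, so that the images of the type II monomial basis elements descend to a basis of $\PFTL(D_n)$. First I would recall from Theorem~\ref{t-basis} (and the remark in Section~\ref{sec:TL} that in type $D_n$ the canonical basis equals the monomial basis) that $\{b_w : w \in \FC(D_n)\}$ is a basis of $\TL(D_n)$, and that $\FC(D_n)$ partitions into those $w$ whose heap $H(w)$ is of type I and those of type II, according to whether $w$ admits a reduced expression containing $s_{\overline{1}}s_1$ as a subword. Let $I$ denote the two-sided ideal of $\TL(D_n)$ generated by $b_1 b_{\overline{1}}$, so $\PFTL(D_n) = \TL(D_n)/I$ by definition. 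It suffices to prove that $I$ is the free $\Z[\delta]$-submodule spanned by $\{b_w : H(w)\text{ is of type I}\}$; then the quotient is free on the images of the complementary basis vectors, namely $\{\b_w : H(w)\text{ is of type II}\}$.

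The key step is the containment $I \subseteq \operatorname{span}_{\Z[\delta]}\{b_w : H(w)\text{ type I}\}$ together with the reverse. For the forward direction, note that an arbitrary element of $I$ is a $\Z[\delta]$-linear combination of products $b_u (b_1 b_{\overline{1}}) b_v$ with $u, v \in W(D_n)$. Using relations (1)--(3) of Theorem~\ref{def:TL(D)} one rewrites such a product in the monomial basis; because $s_{\overline{1}}$ and $s_1$ commute, any reduced word for the underlying element of $W$ obtained after cancellation either is not reduced (in which case relation (3) or (1) strictly shortens it, or produces $\delta$-multiples) or is a reduced word still containing $s_1 s_{\overline{1}}$ (equivalently $s_{\overline{1}} s_1$) as a subword — hence indexes a type I heap. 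One should argue by induction on $\ell(u)+\ell(v)$, applying the defining relations to push the $s_1 s_{\overline{1}}$ pair around: whenever a generator $s_i$ is multiplied against the pair, either it commutes past, or a relation of type (3) fires, and in every case the result stays in the span of $b_w$ with $H(w)$ type I (or lower-order terms of the same form). For the reverse containment, one checks that every type I monomial basis element $b_w$ actually lies in $I$: if $H(w)$ is type I, pick a reduced expression $\w$ for $w$ in which $s_{\overline{1}} s_1$ (or $s_1 s_{\overline{1}}$) appears consecutively, so $b_w = b_{x_1}\cdots (b_{\overline{1}} b_1) \cdots b_{x_m}$ up to reordering commuting generators, and this is manifestly a multiple of the generator of $I$ on both sides, hence in $I$.

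The main obstacle I anticipate is the forward containment — verifying that multiplying $b_1 b_{\overline{1}}$ by arbitrary monomials never escapes the type I span. This is the ``no cancellation into type II'' phenomenon and requires a careful case analysis of how the defining relations interact with a reduced word carrying the $s_{\overline{1}}s_1$ pattern; in particular one must rule out the possibility that relation (3), $b_i b_j b_i = b_i$, ever deletes the distinguished pair and leaves a reduced word with no $s_{\overline{1}}s_1$ subword. In fact the cleanest route is diagrammatic: via the isomorphism $\TL(D_n)\cong\DTL(D_n)$ of Section~\ref{sec:admissible}, the type I monomial basis elements correspond exactly to the $D$-admissible diagrams of type~I (those with one decorated loop), and $d_1 d_{\overline{1}}$ produces a decorated loop (this is precisely the computation in the proof of Proposition~\ref{rem:D relations hold}(2), case $d_{\overline{1}}d_1$); multiplying any diagram above or below a diagram that already contains a decorated loop keeps a decorated loop present, since loops are never destroyed by concatenation. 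Thus the ideal generated by $d_1 d_{\overline{1}}$ consists of combinations of diagrams each containing a decorated loop, i.e. exactly the span of the type~I $D$-admissible diagrams, which completes the argument.

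Finally I would assemble the pieces: $\TL(D_n)$ is free on the monomial basis; $I$ equals the span of the type I part; therefore $\TL(D_n)/I$ is free on the images of the type II monomial basis elements, proving that $\{\b_w : H(w)\text{ is of type II}\}$ is a basis for $\PFTL(D_n)$.
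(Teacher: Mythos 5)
Your proposal is correct, and it supplies considerably more detail than the paper does: the paper justifies this proposition with a single sentence asserting that the relation $b_1b_{\overline{1}}=0$ ``eliminates the monomials indexed by the type I heaps but has no impact on the monomials indexed by the type II heaps,'' which is exactly your key claim that the two-sided ideal generated by $b_1b_{\overline{1}}$ equals the $\Z[\delta]$-span of the type~I monomial basis elements. Your reverse containment is clean (a type~I element has a reduced expression with $s_{\overline{1}}s_1$ as a consecutive subword, and $b_{\overline{1}}b_1=b_1b_{\overline{1}}$ since the generators commute, so $b_w$ visibly lies in the ideal). For the forward containment, your first sketch --- the induction on $\ell(u)+\ell(v)$ pushing the pair through relations (1)--(3) --- is the weak point and would require the careful case analysis you yourself flag; but your diagrammatic replacement is sound: $d_1d_{\overline{1}}$ creates a decorated loop, concatenation never destroys a closed loop or touches its lone decoration, and the local relations reduce any diagram containing a decorated loop to a $\delta$-multiple of a type~I $D$-admissible diagram, so the ideal stays inside the type~I span. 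The one thing to be aware of is that this route leans on the identification of type~I monomial basis elements with type~I $D$-admissible diagrams, which is Theorem~\ref{index} in the paper and is stated and proved only \emph{after} this proposition; since that theorem's proof (via simple representations and Lemma~\ref{configs}) does not depend on the present proposition, there is no circularity, but in a self-contained write-up you would need to establish that correspondence first.
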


Note that if $w\in\FC(D_n)$ and no reduced expression of $w$ has $s_{\overline{1}}s_1$ as a subword, the heap of $w$ is of type II. In this case, we can safely identify $\b_w$ with $b_w$. We can represent $\PFTL(D_{n})$ in terms of generators and relations in a similar fashion to that of $\TL(D_n)$.

\begin{remark}\label{def:pfTL(D)}
The algebra $\PFTL(D_{n})$ ($n\ge4$)  is the unital $\Z[\delta]$-algebra generated by $\b_{\overline{1}},\b_{1},\b_{2},\ldots,\b_{n-1}$ with defining relations
\begin{enumerate}[leftmargin=0.6in]
\item $\b_{i}^{2}=\delta \b_{i}$ for all $i$, where $\delta$ is an indeterminate;
\item $\b_{i}\b_{j} = \b_{j}\b_{i}$ if $s_i$ and $s_j$ are not connected in the Coxeter graph of type $D_n$;
\item $\b_{i}\b_{j}\b_{i} = \b_{i}$ if $s_i$ and $s_j$ are connected in the Coxeter graph of type $D_n$;
\item $\b_{1}\b_{\overline{1}}=0.$
\end{enumerate}
\end{remark}


\begin{section}{Loop-free diagram algebra}\label{sec:loopfree}


We will now construct a diagram algebra that turns out to be a diagrammatic representation of $\PFTL(D_n)$.

\begin{definition}\label{def:D_nII}
Let $\LFD(D_{n})$ be the $\Z[\delta]$-algebra equal to the quotient of $\DTL(D_n)$ with the additional relation given in Figure~\ref{decloop}.
\end{definition}

\begin{figure}[!ht]
\centering
\begin{align*}
\begin{tabular}[c]{l}
\begin{tikzpicture}
\dlp{0}{-1};
\end{tikzpicture}
\end{tabular}
&=~0
\end{align*}
\caption{Additional defining relation of $\LFD(D_n)$.}
\label{decloop}
\end{figure}

Let $\d_w$ denote the image of $d_w$ in the quotient. Since the relation in Figure~\ref{decloop} has no effect on the type II diagrams, we can safely identify $\d_{w}$ with $d_{w}$ when $d_w$ is of type II and will use the same diagram to represent $\d_w$. Since $d_{\overline{1}},d_1,\ldots,d_n$ generate the type I and type II diagrams and each $d_i$ is a type II diagram, it follows that $\LFD(D_n)$ is generated by $\d_{\overline{1}},\d_1,\d_2,\ldots,\d_{n-1}$.

\begin{proposition}\label{rem:DII relations hold}
Each of the following relations are satisfied for $\LFD(D_{n})$.
\begin{enumerate}[leftmargin=0.6in]
\item $\d_{i}^{2}=\delta \d_{i}$ for all $i$;
\item $\d_{i}\d_{j}=\d_{j}\d_{i}$ if $s_i$ and $s_j$ are not connected in the Coxeter graph of type $D_n$;
\item $\d_{i}\d_{j}\d_{i}=\d_{i}$ if $s_i$ and $s_j$ are connected in the Coxeter graph of type $D_n$;
\item $\d_{1}\d_{\overline{1}}=0.$
\end{enumerate}
\end{proposition}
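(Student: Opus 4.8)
The plan is to reduce everything to facts already in hand. Since $\LFD(D_{n})$ is by definition (Definition~\ref{def:D_nII}) a quotient of $\DTL(D_{n})$, and the generators $\d_{i}$ are the images of the $d_{i}$ under the quotient map, relations (1), (2), and (3) are immediate: Proposition~\ref{rem:D relations hold} established exactly these identities among $d_{\overline{1}},d_{1},\dots,d_{n-1}$ inside $\DTL(D_{n})$, and a $\Z[\delta]$-algebra homomorphism carries them to the corresponding identities among $\d_{\overline{1}},\d_{1},\dots,\d_{n-1}$. (Equivalently, each $d_{i}$ is a type II diagram, so $\d_{i}$ may be identified with $d_{i}$, and the relations persist verbatim.) Thus all the content lies in relation (4).

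For relation (4), I would compute $d_{1}d_{\overline{1}}$ by diagram concatenation in $\widehat{\P}_{n+1}^{L}(\bcirc)$, exactly as in the $d_{\overline{1}}d_{1}$ case treated in the proof of Proposition~\ref{rem:D relations hold} (the order is immaterial, since that computation also shows $d_{\overline{1}}d_{1}=d_{1}d_{\overline{1}}$). Stacking $d_{1}$ on top of $d_{\overline{1}}$, the south non-propagating edge of $d_{1}$ (joining $1'$ to $2'$, undecorated) is glued to the north non-propagating edge of $d_{\overline{1}}$ (joining $1$ to $2$ and carrying a single decoration); together these close up into a loop carrying one decoration. The remaining edges of the concatenated diagram are the undecorated non-propagating edge $1\to 2$ coming from $d_{1}$, the decorated non-propagating edge $1'\to 2'$ coming from $d_{\overline{1}}$, and vertical propagating edges elsewhere. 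Applying the third defining relation of $\widehat{\P}_{n+1}^{L}(\bcirc)$ (Figure~\ref{fig:relations}), which strips the decoration off any edge in the presence of a decorated loop, removes the surviving decoration on the $1'\to 2'$ edge, so $d_{1}d_{\overline{1}}$ equals a $D$-admissible type I diagram: an undecorated $d_{1}$-shaped diagram together with one decorated loop.

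Finally, I would pass to $\LFD(D_{n})$: its extra defining relation (Figure~\ref{decloop}) sets any decorated loop equal to $0$, so the type I diagram just obtained is $0$ in $\LFD(D_{n})$, giving $\d_{1}\d_{\overline{1}}=0$. There is no serious obstacle here; the only point requiring care is the bookkeeping for the concatenation $d_{1}d_{\overline{1}}$ and the fate of the two pre-existing decorations on $d_{\overline{1}}$ — one migrates onto the new loop, the other is erased by the decorated loop — but this is precisely the computation already carried out for $d_{\overline{1}}d_{1}$ in Proposition~\ref{rem:D relations hold}, so nothing new needs to be verified.
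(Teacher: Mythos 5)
Your proposal is correct and takes essentially the same approach as the paper: relations (1)--(3) descend immediately from Proposition~\ref{rem:D relations hold} because $\LFD(D_{n})$ is a quotient of $\DTL(D_{n})$, and relation (4) is verified by exactly the same concatenation computation, in which stacking $d_{1}$ and $d_{\overline{1}}$ produces a decorated loop that the additional relation of Figure~\ref{decloop} sends to $0$. Your bookkeeping of the two decorations (one joining the loop, the other stripped by the third relation of Figure~\ref{fig:relations}) matches the paper's displayed calculation.
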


\begin{proof}
Since the first three relations hold in $\TL(D_n)$, the only relation left to check is $\d_{1}\d_{\overline{1}}=0$. We see that
\begin{align*}
\d_1\d_{\overline{1}} &=
\begin{tabular}[c]{l}
\begin{tikzpicture}[scale=.8]
\sixbox{0};
\draw (1,0) arc (-180:0:0.5 and 0.4);
\draw (1,-2) arc (180:0:0.5 and 0.4);
\draw (3,0) -- (3,-2);
\draw (4,0) -- (4,-2);
\draw (5,0) -- (5,-2);
\draw (6,0) -- (6,-2);
\node at (5.5,-1) {$\cdots$};
\sixbox{-2};
\draw (1,-4) arc (180:0:0.5 and 0.4);
\draw (1,-2) arc (-180:0:0.5 and 0.4);
\draw (3,-4) -- (3,-2);
\draw (4,-4) -- (4,-2);
\draw (5,-4) -- (5,-2);
\draw (6,-4) -- (6,-2);
\node at (5.5,-3) {$\cdots$};
\draw[fill=cyan, draw=white]{(1.5,-2.4) circle (2.8pt)};
\draw[fill=cyan, draw=white]{(1.5,-3.6) circle (2.8pt)};
\end{tikzpicture}
\end{tabular}\\
&= 
\begin{tabular}[c]{l}
\begin{tikzpicture}[scale=.8]
\sixbox{0};
\draw (1,0) arc (-180:0:0.5 and 0.4);
\draw (1,-2) arc (180:0:0.5 and 0.4);
\draw (3,0) -- (3,-2);
\draw (4,0) -- (4,-2);
\draw (5,0) -- (5,-2);
\draw (6,0) -- (6,-2);
\node at (5.5,-1) {$\cdots$};
\dlp{1.5}{-1};
\end{tikzpicture}
\end{tabular}\\
&=0.
\end{align*}
\end{proof}

The next proposition follows quickly from Proposition~\ref{rem:DII relations hold} since $\LFD(D_{n})$ satisfies the relations given in Remark~\ref{def:pfTL(D)}.

\begin{proposition}\label{prop:surjective homo}
The map $\phi: \PFTL(D_{n}) \to \LFD(D_{n})$ determined by $\phi(\b_{i})=\d_{i}$ is a well-defined surjective $\Z[\delta]$-algebra homomorphism. \qed
\end{proposition}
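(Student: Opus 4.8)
The plan is to read off the presentation of $\PFTL(D_n)$ recorded in Remark~\ref{def:pfTL(D)} and apply the universal property of an algebra given by generators and relations, using Proposition~\ref{rem:DII relations hold} as the verification step. Concretely, $\PFTL(D_n)$ is the unital $\Z[\delta]$-algebra on generators $\b_{\overline 1},\b_1,\b_2,\dots,\b_{n-1}$ subject to relations (1)--(4) of Remark~\ref{def:pfTL(D)}; equivalently it is the free unital $\Z[\delta]$-algebra on these symbols modulo the two-sided ideal generated by the elements $\b_i^2-\delta\b_i$, $\b_i\b_j-\b_j\b_i$ (for $s_i,s_j$ nonadjacent in $D_n$), $\b_i\b_j\b_i-\b_i$ (for $s_i,s_j$ adjacent in $D_n$), and $\b_1\b_{\overline 1}$. (If one prefers not to take Remark~\ref{def:pfTL(D)} as given, one first notes that $\PFTL(D_n)$ is by definition the quotient of $\TL(D_n)$ by the ideal generated by $b_1b_{\overline 1}$, and then combines the presentation of $\TL(D_n)$ from Theorem~\ref{def:TL(D)} with this extra relation to obtain exactly the presentation above.)

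First I would establish well-definedness. By the universal property just described, there is a (unique) $\Z[\delta]$-algebra homomorphism $\PFTL(D_n)\to\LFD(D_n)$ sending $\b_i\mapsto\d_i$ for each $i\in\{\overline 1,1,2,\dots,n-1\}$ provided the elements $\d_{\overline 1},\d_1,\d_2,\dots,\d_{n-1}\in\LFD(D_n)$ satisfy all of the defining relations (1)--(4). This is precisely the content of Proposition~\ref{rem:DII relations hold}: parts (1)--(3) there give the first three families of relations, and part (4) gives $\d_1\d_{\overline 1}=0$. Hence $\phi$ is a well-defined $\Z[\delta]$-algebra homomorphism.

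Next I would establish surjectivity. As observed in the paragraph preceding Proposition~\ref{rem:DII relations hold}, the diagrams $d_{\overline 1},d_1,\dots,d_{n-1}$ generate the type~I and type~II diagrams, each $d_i$ is itself of type~II, and consequently $\LFD(D_n)$ is generated as a unital $\Z[\delta]$-algebra by $\d_{\overline 1},\d_1,\dots,\d_{n-1}$. Since $\phi(\b_i)=\d_i$ for every $i$, the image of the algebra homomorphism $\phi$ contains this generating set, hence $\phi$ is surjective.

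There is no real obstacle here: the statement is a formal consequence of a presentation together with a relation-checking lemma that has already been proved diagrammatically. The only point requiring a modicum of care is making sure the presentation used for $\PFTL(D_n)$ is exactly the one whose relations are verified in Proposition~\ref{rem:DII relations hold}, i.e. that passing to the quotient of $\TL(D_n)$ by $b_1b_{\overline 1}$ really does add precisely the single relation $\b_1\b_{\overline 1}=0$ to the presentation of $\TL(D_n)$; this is immediate from the definition of a quotient by a principal two-sided ideal.
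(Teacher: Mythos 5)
Your argument is correct and matches the paper's: the paper likewise derives well-definedness immediately from Proposition~\ref{rem:DII relations hold} together with the presentation in Remark~\ref{def:pfTL(D)}, and surjectivity from the fact that the $\d_i$ generate $\LFD(D_n)$. Your version simply spells out the universal-property reasoning that the paper leaves implicit.
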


Since the $D$-admissible diagrams form a basis for $\DTL(D_{n})$ and the relation in (\ref{decloop}) eliminates the type I $D$-admissible diagrams but has no impact on the type II $D$-admissible diagrams, the following proposition holds.

\begin{proposition}
The images of the type II $D$-admissible diagrams form a basis for $\LFD(D_n)$. \qed
\end{proposition}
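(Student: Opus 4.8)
The statement to prove is that the images of the type II $D$-admissible diagrams form a basis for $\LFD(D_n)$.

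The plan is to use a standard quotient-basis argument. We already know from the results cited earlier that the $D$-admissible diagrams (type I together with type II) form a basis for $\DTL(D_n)$, and that $\LFD(D_n)$ is the quotient of $\DTL(D_n)$ by the two-sided ideal $I$ generated by the single relation in Figure~\ref{decloop}, namely that a decorated loop equals $0$. So it suffices to show two things: first, that the ideal $I$ is spanned (as a $\Z[\delta]$-module) exactly by the type I $D$-admissible diagrams, equivalently that $I$ has the type I diagrams as a $\Z[\delta]$-basis; and second, that consequently the images of the type II diagrams descend to a basis of the quotient $\DTL(D_n)/I$.

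First I would recall the classification of $D$-admissible diagrams: a type I diagram is characterized by containing exactly one decorated loop and nothing else by way of loops or decorations, while a type II diagram has no loops and an even number of decorations (Definition~\ref{def:admissible}). The key structural observation is that in $\DTL(D_n)$ every $D$-admissible diagram with a decorated loop is, modulo the undecorated-loop relation $\bigcirc = \delta$, equal to a scalar multiple of a \emph{unique} type I diagram (the one obtained by deleting all but one loop and absorbing the others into powers of $\delta$), and conversely multiplying any type I diagram by the decorated-loop relation sends it to $0$. Thus applying the relation in Figure~\ref{decloop} kills precisely the span of the type I diagrams: the subspace $\langle \text{type I diagrams}\rangle$ is exactly the kernel of the quotient map on the level of basis vectors. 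I would make this precise by arguing that $I = \langle\text{type I diagrams}\rangle_{\Z[\delta]}$, using that the type I diagrams span a two-sided ideal (concatenating any diagram with a diagram containing a decorated loop yields a diagram still containing that decorated loop, hence again a type I diagram up to $\delta$-powers and the decoration-cancellation relation (D1)), which both contains the generator of $I$ and is contained in $I$.

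Once $I$ is identified with the $\Z[\delta]$-span of the type I basis elements, the conclusion is immediate from elementary module theory: if $B = B_{\mathrm{I}} \sqcup B_{\mathrm{II}}$ is a $\Z[\delta]$-basis of a free module $M$ and $I$ is the submodule spanned by $B_{\mathrm{I}}$, then $M/I$ is free with basis the images of $B_{\mathrm{II}}$. Applying this with $M = \DTL(D_n)$ gives that $\LFD(D_n) = \DTL(D_n)/I$ is free over $\Z[\delta]$ with basis the images of the type II $D$-admissible diagrams, which is what we want. I expect the only real subtlety — the ``hard part'' — to be verifying carefully that the ideal $I$ contains \emph{no} type II diagrams and reduces \emph{every} type I diagram to zero without unexpectedly also forcing relations among the type II diagrams; this is exactly the point where one must use that decorated loops only arise from configurations involving the $s_{\overline 1}$--$s_1$ interaction and that the local relations of $\widehat{\P}_{n+1}^L(\bcirc)$ never create a decorated loop out of a loop-free diagram. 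Granting the earlier structural results about $\DTL(D_n)$, this reduces to a short check on the normal forms of irreducible L-decorated diagrams.
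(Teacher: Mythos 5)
Your proof is correct and takes essentially the same route as the paper, which justifies this proposition in a single sentence: the $D$-admissible diagrams form a basis of $\DTL(D_n)$, and the decorated-loop relation kills precisely the type I basis elements while leaving the type II ones untouched. Your explicit identification of the ideal with the $\Z[\delta]$-span of the type I diagrams (and the check that this span is a two-sided ideal) simply fills in the details the paper leaves implicit.
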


If $d$ is a $D$-admissible diagram, then we say that a non-propagating edge joining $i$ to $i+1$ (respectively, $i'$ to $(i+1)'$) is \emph{simple} if it is  identical to the edge joining $i$ to $i+1$ (respectively $i'$ to $(i+1)'$) in the simple diagram $d_i$. That is, an edge is simple if it joins adjacent vertices in the north face (respectively, south face) and is undecorated, except when one of the vertices is $1$ or $1'$, in which case it may be decorated by only a single decoration~$\bcirc$.

Let $\w=s_{x_1}\cdots s_{x_k}$ be a reduced expression for $w\in\FC(D_n)$. Then $d=d_{x_1}\cdots d_{x_k}$. For each $d_{x_i}$, fix a concrete representation that has straight propagating edges and no unneccessary ``wiggling" of the simple non-propagating edges. Now, consider the concrete diagram that results from stacking the concrete simple diagrams $d_{x_1},\ldots,d_{x_k}$, rescaling vertically to recover the standard $k$-box, but not deforming any of the simple edges or applying any relations among the decorations. We will refer to this concrete diagram as the \emph{concrete simple representation of} $d_{\w}$. 

Since $w$ is fully commutative and vertical equivalence respects commutation, given two different reduced expressions $\w_1$ and $\w_2$ for $w$, the concrete simple representations $d_{\w_1}$ and $d_{\w_2}$ will be vertically equivalent (see Remark~\ref{vertequiv}). We define the vertical equivalence class of concrete simple representations to be the \emph{simple representation of }$d_w$. The simple representation of $d_w$ is designed to replicate the structure of the corresponding heap.

\begin{example}\label{simplerep}
Let $\w=s_{\overline{1}}s_{3}s_{2}s_{1}$ be a reduced expression for $w\in\FC(D_4)$. The concrete simple representation of $d_{w}$ is shown in Figure~\ref{fig:simplerep} where the vertical dashed lines in the diagram indicate that the two non-propagating edges are part of the same generator.
\end{example}

\begin{figure}[!ht]
\centering
\begin{tabular}[c]{l}
\begin{tikzpicture}[scale=1]
\fivebox{2};
\draw (1,2) arc (-180:0:0.5 and 0.4);
\draw[fill=cyan, draw=white]{(1.5,1.6) circle (2.8pt)};
\draw (1,0) arc (180:0:0.5 and 0.4);
\draw[fill=cyan, draw=white]{(1.5,0.4) circle (2.8pt)};
\draw (3,2) -- (3,0);
\draw (4,2) -- (4,0);
\draw (5,2) -- (5,0);
\fivebox{0};
\draw (1,0)  -- (1,-2);
\draw (3,0)  arc (-180:0:0.5 and 0.4) ;
\draw (3,-2)  arc (180:0:0.5 and 0.4) ;
\draw (2,-2)  -- (2,0);
\draw (5,0) -- (5,-2);
\fivebox{-2};
\draw (1,-2)  -- (1,-4);
\draw (2,-2)  arc (-180:0:0.5 and 0.4) ;
\draw (2,-4)  arc (180:0:0.5 and 0.4) ;
\draw (4,-4)  -- (4,-2);
\draw (5,-4) -- (5,-2);
\fivebox{-4};
\draw (3,-6)  -- (3,-4);
\draw (1,-4)  arc (-180:0:0.5 and 0.4) ;
\draw (1,-6)  arc (180:0:0.5 and 0.4) ;
\draw (4,-4)  -- (4,-6);
\draw (5,-4) -- (5,-6);
\end{tikzpicture}
\end{tabular}
~$=$~
\begin{tabular}[c]{l}
\begin{tikzpicture}[scale=1]
\topbox{2};
\draw (1,2) arc (-180:0:0.5 and 0.4);
\draw[fill=cyan, draw=white]{(1.5,1.6) circle (2.8pt)};
\draw (1,0) arc (180:0:0.5 and 0.4);
\draw[fill=cyan, draw=white]{(1.5,0.4) circle (2.8pt)};
\draw (3,2) arc (-180:0:0.5 and 0.4);
\draw (5,2) -- (5,0);
\draw[dashed] (1.5,1.5) -- (1.5,0.5);
\draw[dashed] (3.5,1.5) -- (3.5,0.5);
\middlebox{0};
\draw (1,0)  -- (1,-2);
\draw (3,0)  arc (180:0:0.5 and 0.4) ;
\draw (2,0)  arc(-180:0:0.5 and 0.4);
\draw (2,-2)  arc(180:0:0.5 and 0.4);
\draw (4,0) -- (4,-2);
\draw (5,0) -- (5,-2);
\draw[dashed] (2.5,-0.5) -- (2.5,-1.5);
\bottombox{-2};
\draw (3,-2)  -- (3,-4);
\draw (4,-4)  -- (4,-2) ;
\draw (1,-2)  arc (-180:0:0.5 and 0.4) ;
\draw (1,-4)  arc (180:0:0.5 and 0.4) ;
\draw (5,-2) -- (5,-4);
\draw[dashed] (1.5,-2.5) -- (1.5,-3.5);
\end{tikzpicture}
\end{tabular}
\caption{Example of a simple representation.}\label{fig:simplerep}
\end{figure}
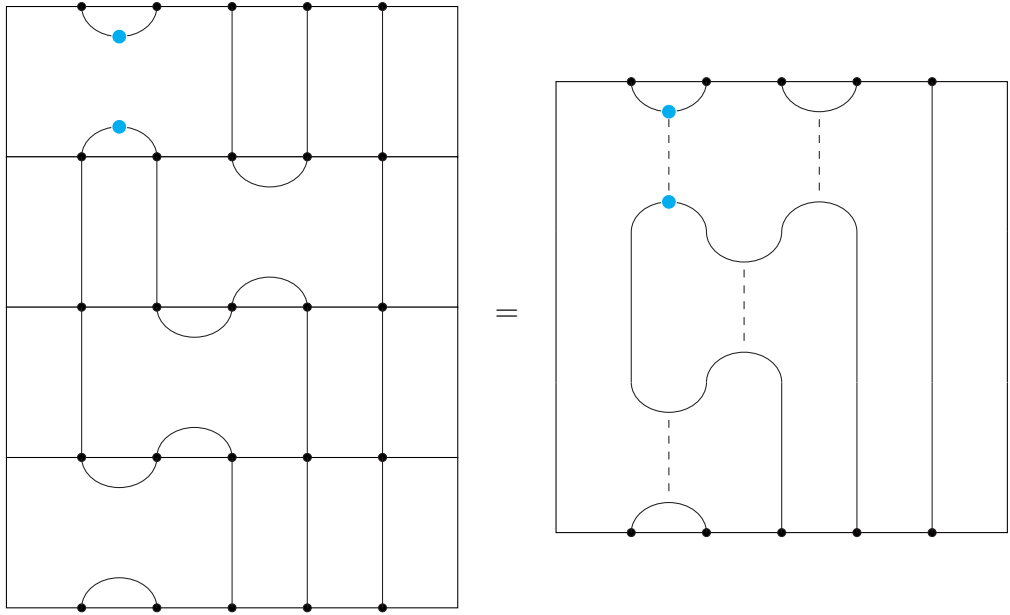 
 
\begin{lemma}\label{configs}
Let $\w$ be a reduced expression for $w\in \FC(D_n)$. Then the simple representation of $d_{\w}$ cannot have the configurations shown in Figure~\ref{notFC}.
\end{lemma}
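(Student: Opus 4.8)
The plan is to use the fact, emphasised just before the statement, that the simple representation of $d_{\w}$ is designed to replicate the heap $H(\w)=H(w)$. Concretely, I would set up a dictionary between local patterns in a concrete simple representation of $d_{\w}$ and convex subheaps of $H(w)$: each (possibly decorated) cup--cap pair in the picture corresponds to exactly one entry of $H(w)$, labelled by the generator whose simple diagram contributed that pair; two such pairs are vertically consecutive in the picture, with nothing in the relevant columns between them, precisely when the two corresponding heap entries are consecutive in $H(w)$; a decorated cup--cap pair in columns $1,2$ records an entry labelled $s_{\overline{1}}$, an undecorated one an entry labelled $s_1$; and an empty box in one of the pictures records the absence of any cup--cap pair there, i.e.\ the absence of a heap entry at the corresponding lattice position. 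Under this dictionary each configuration in Figure~\ref{notFC} translates into exactly one of the impermissible convex subheaps of Figure~\ref{fig:notFCD}.

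Granting this translation, the argument is immediate and I would present it as a short case check, one case per picture in Figure~\ref{notFC}. If the simple representation of $d_{\w}$ contained one of these configurations, then by the proposition identifying convex subheaps of $H(w)$ with heaps of subwords of reduced expressions for $w$, the heap $H(w)$ would contain the corresponding convex subheap from Figure~\ref{fig:notFCD}; since $w\in\FC(D_n)$, this is forbidden by Lemma~\ref{lem:notFC3Dheaps}, a contradiction. As a sanity check one can also argue purely combinatorially: each configuration forces a convex subheap of the shape $s_is_js_i$ with $s_i$ and $s_j$ connected in the Coxeter graph of type $D_n$ (including the case $\{i,j\}=\{2,\overline{1}\}$), equivalently, by Remark~\ref{subwords} together with the convex-subheap proposition, that some reduced expression for $w$ contains $s_is_js_i$ as a subword, which contradicts $w\in\FC(D_n)$; the one remaining possibility is that the configuration forces two consecutive equal generators, contradicting that $\w$ is reduced.

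The main obstacle is not any deep idea but the bookkeeping required to make the dictionary of the first paragraph precise. One must verify that the ``nothing in between'' hypotheses encoded by the empty boxes, and by the requirement that two stacked cup--cap pairs be vertically consecutive, translate faithfully into the convexity and consecutivity hypotheses built into Figure~\ref{fig:notFCD}. This is most delicate in the configurations involving the decoration $\bcirc$: the simple diagrams $d_{\overline{1}}$, $d_1$ and $d_2$ all place a cup--cap pair in columns $1$ and $2$, so one has to keep track of which cups carry a decoration and confirm that the pattern read off is genuinely (or genuinely not) a forbidden convex subheap involving $s_{\overline{1}}$, using that $s_{\overline{1}}$ is connected in the Coxeter graph only to $s_2$. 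Once these correspondences are pinned down, Lemma~\ref{configs} follows at once from Lemma~\ref{lem:notFC3Dheaps}.
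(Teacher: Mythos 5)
Your proposal is correct and, at its core, takes the same route as the paper: the paper's entire proof is the one-line observation that either configuration in Figure~\ref{notFC} forces $s_{i+1}s_is_{i+1}$ (or $s_is_{i+1}s_i$) to occur as a subword of a reduced expression for $w$, contradicting full commutativity --- which is precisely the direct argument you offer as a ``sanity check.'' The heap dictionary and the appeal to Lemma~\ref{lem:notFC3Dheaps} are harmless extra scaffolding (and the worry about decorated configurations is moot, since Figure~\ref{notFC} involves only the undecorated columns $i,i+1,i+2$), but they are not needed and the paper does not use them.
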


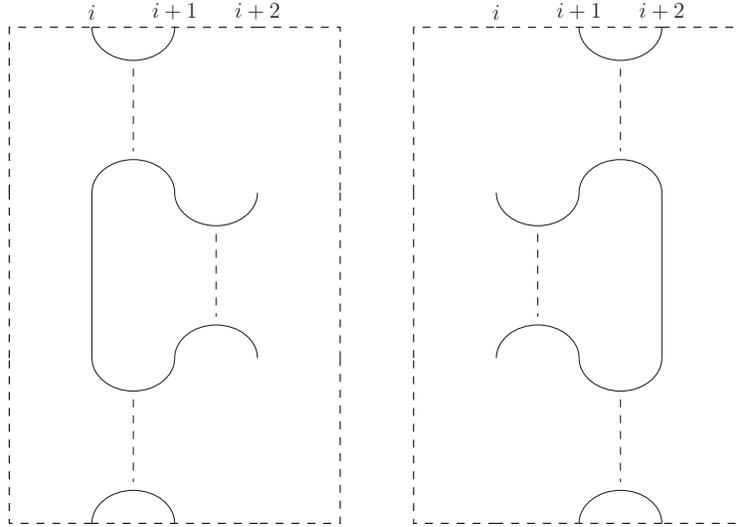
\begin{figure}[!ht]
\begin{center}
\begin{tikzpicture}[scale=1.1]
\tttbox{2};
\draw (1,2) arc (-180:0:0.5 and 0.4);
\draw (1,0) arc (180:0:0.5 and 0.4);
\draw[dashed] (1.5,1.5) -- (1.5,0.5);
\mmmidbox{0};
\draw (2,0)  arc(-180:0:0.5 and 0.4);
\draw (2,-2)  arc(180:0:0.5 and 0.4);
\draw (1,0) -- (1,-2);
\draw[dashed] (2.5,-0.5) -- (2.5,-1.5);
\bbbotbox{-2};
\draw (1,-2)  arc (-180:0:0.5 and 0.4) ;
\draw (1,-4)  arc (180:0:0.5 and 0.4) ;
\draw[dashed] (1.5,-2.5) -- (1.5,-3.5);
\end{tikzpicture}
\quad\quad
\begin{tikzpicture}[scale=1.1]
\tttbox{2};
\draw (2,2) arc (-180:0:0.5 and 0.4);
\draw (2,0) arc (180:0:0.5 and 0.4);
\draw[dashed] (2.5,1.5) -- (2.5,0.5);
\mmmidbox{0};
\draw (1,0)  arc(-180:0:0.5 and 0.4);
\draw (1,-2)  arc(180:0:0.5 and 0.4);
\draw (3,0) -- (3,-2);
\draw[dashed] (1.5,-0.5) -- (1.5,-1.5);
\bbbotbox{-2};
\draw (2,-2)  arc (-180:0:0.5 and 0.4) ;
\draw (2,-4)  arc (180:0:0.5 and 0.4) ;
\draw[dashed] (2.5,-2.5) -- (2.5,-3.5);
\end{tikzpicture}
\end{center}
\caption{Impermissible configurations for a simple representation.}\label{notFC}
\end{figure}

\begin{proof}
Let $\w$ be a reduced expression for $w\in\FC(D_n)$. If $\w$ has either configuration in Figure~\ref{notFC}, then $\w$ has $s_{i+1}s_{i}s_{i+1}$ as a subword. Hence, $w$ is not fully commutative, which is a contradiction. 
\end{proof}

\begin{theorem}\label{index}
The type I and type II diagrams are indexed by the type I and type II heaps, respectively.
\end{theorem}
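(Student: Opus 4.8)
The plan is to set up a bijection between fully commutative heaps of type $D_n$ and irreducible $D$-admissible diagrams, and then to check that this bijection restricts correctly on the type I / type II dichotomy. The natural map goes from heaps to diagrams: given $w \in \FC(D_n)$ with reduced expression $\w = s_{x_1}\cdots s_{x_k}$, we form the simple representation of $d_w = d_{x_1}\cdots d_{x_k}$, which by the discussion preceding the theorem is well-defined (independent of the choice of reduced expression, by vertical equivalence and Lemma~\ref{configs}). Since the monomial basis $\{b_w : w \in \FC(D_n)\}$ is a basis for $\TL(D_n)$ and $\theta$ is an isomorphism (the theorem of Green at the end of Section~\ref{sec:admissible}), the map $w \mapsto d_w$ is a bijection from $\FC(D_n)$ onto the set of $D$-admissible diagrams; the content of the present theorem is that this bijection sends type I heaps to type I diagrams and type II heaps to type II diagrams.

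First I would recall the definitions on both sides: a heap is type I iff $w$ has some reduced expression with $s_{\overline 1}s_1$ (equivalently $s_1 s_{\overline 1}$, since these commute) as a subword, and a $D$-admissible diagram is type I iff it contains exactly one loop, which is decorated, and no other loops or decorations. So I would argue: if $H(w)$ is type I, then $w = x\, s_{\overline 1} s_1\, y$ for some reduced expressions, hence $d_w = d_x\, d_{\overline 1} d_1\, d_y$; the product $d_{\overline 1} d_1$ produces a decorated loop (this is exactly the computation in the proof of Proposition~\ref{rem:D relations hold}, case $d_{\overline 1}d_1 = d_1 d_{\overline 1}$, where the decorated loop absorbs all other decorations and, after reducing by the relations of $\widehat{\P}_{n+1}^L(\bcirc)$, one is left with a single decorated loop and an undecorated loop-free remainder). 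One must then verify that the loop-free part $d_x d_y$ does not itself contribute a loop or extra decoration — but any loop or decoration arising there would have to survive the third relation of Figure~\ref{fig:relations}, and since the result $d_w$ is $D$-admissible it is irreducible, so the only loop present is the decorated one, putting $d_w$ in type I. Conversely, if $d_w$ is type I, then $d_w$ has a decorated loop; I would trace where that loop and its decorations can come from in a product of simple diagrams $d_{x_1}\cdots d_{x_k}$: a decorated loop is created only when two decorated simple edges (necessarily from $d_{\overline 1}$'s) close up around an edge that can also be closed by a $d_1$, which forces an adjacent pair $s_{\overline 1}, s_1$ in some reduced expression for $w$ (after commuting), i.e. $H(w)$ is type I.

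The type II statement should then follow by exclusion together with a parity check: if $H(w)$ is type II then $w$ has no reduced expression containing $s_{\overline 1}s_1$ as a subword, so (by the first part, contrapositive) $d_w$ has no loops; and the total number of decorations in $d_w$ is even because decorations in the simple representation come exactly from occurrences of $s_{\overline 1}$, each occurrence contributing a matched pair of decorations (one on the north simple edge of that copy of $d_{\overline 1}$, one on the south), and these can only be destroyed in pairs when two decorations on the same edge cancel (as in the proof of Proposition~\ref{rem:D relations hold}); hence the parity is preserved and $d_w$ is type II. I expect the main obstacle to be the converse direction of the type I case — rigorously pinning down that a decorated loop in the irreducible diagram $d_w$ forces the subword $s_{\overline 1}s_1$ in some reduced expression, i.e.\ ruling out that a decorated loop could be assembled "accidentally" from non-adjacent generators. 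This is where Lemma~\ref{configs} and the heap-to-diagram correspondence (the simple representation replicating the structure of the heap) do the real work: one argues that in the simple representation, a loop can only be formed by nested simple non-propagating edges of consecutive generators, and a loop is decorated only if it involves $d_{\overline 1}$, which — combined with the impermissible configurations of Figure~\ref{notFC} — localizes the situation to an $s_{\overline 1}, s_1$ pair in the heap. Once that local analysis is done, the global statement follows by the bijectivity of $w \mapsto d_w$ already established.
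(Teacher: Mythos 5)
Your proposal is correct and takes essentially the same route as the paper: the forward direction rests on the computation showing $d_{\overline{1}}d_{1}$ produces a decorated loop, and the converse traces the decorated loop through the simple representation, using Lemma~\ref{configs} to force the local configuration of an adjacent $s_{\overline{1}},s_{1}$ pair (the paper's Figure~\ref{1bar1}), with type II following by exclusion. Your added parity check for type II and the explicit appeal to the bijectivity of $w\mapsto d_w$ are harmless elaborations of what the paper leaves implicit.
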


\begin{proof}
Let $\w=s_{x_{1}}\cdots s_{x_{n}}$ be a reduced expression for $w\in\FC(D_n)$. Consider the diagram $d_{w}=d_{x_{1}}\cdots d_{x_{n}}$. If $s_{1}s_{\overline{1}}$ is a subword of some reduced expression for $w$, then it is obvious that $d_{w}$  is a type I diagram. Now assume $d_{w}$ is a type I diagram. Clearly, $s_{\overline{1}}\in$ supp$(w)$ since $d_{\overline{1}}$ is the only simple diagram that contains decorations. It is also clear that $d_{w}$ contains only one loop, and hence, if $s_{1}s_{\overline{1}}$ is a subword of $\w$, then it only appears once. 
For $d_{w}$ to be a type I diagram, $d_{w}$ must contain one decorated loop. Consider the occurence of $s_{\overline{1}}$ involved in the loop. Without loss of generality, assume this occurence of $s_{\overline{1}}$ appears on the ``top" of the loop in the simple representation for $d_{\w}$. Since the configurations in Figure~\ref{notFC} cannot happen, there is no way for the loop edge to wander through the simple representation unless the portion of the diagram given in Figure~\ref{1bar1} appears in the simple representation $d(w)$.
\end{proof}

\begin{figure}[h]
\centering
\begin{tikzpicture}[scale=1]
\tbox{2};
\draw (1,2) arc (-180:0:0.5 and 0.4);
\draw[fill=cyan, draw=white]{(1.5,1.6) circle (2.8pt)};
\draw (1,0) arc (180:0:0.5 and 0.4);
\draw[fill=cyan, draw=white]{(1.5,0.4) circle (2.8pt)};
\draw[dashed] (1.5,1.5) -- (1.5,0.5);
\node[above,scale=0.8] at (1,2){$1$};
\node[above,scale=0.8] at (2,2){$2$};
\botbox{0};
\draw (1,0)  arc (-180:0:0.5 and 0.4) ;
\draw (1,-2)  arc (180:0:0.5 and 0.4) ;
\draw[dashed] (1.5,-0.5) -- (1.5,-1.5);
\end{tikzpicture}
\caption{Portion of the simple representation from Theorem~\ref{index}.}
\label{1bar1}
\end{figure}
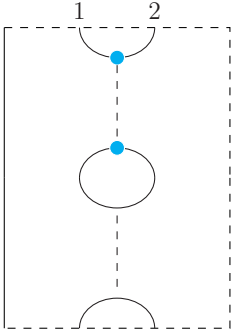

The following theorem follows quickly from Proposition~\ref{prop:surjective homo} and Theorem~\ref{index}.

\begin{theorem}
The diagram algebra, $\LFD(D_{n})$, is isomorphic to $\PFTL(D_{n})$ under $\phi$ as defined in Proposition~\ref{prop:surjective homo}. Moreover, the image of the monomial basis for $\PFTL(D_{n})$ is in natural bijection with the image of the set of type II $D$-admissible diagrams. \qed
\end{theorem}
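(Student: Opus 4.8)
The plan is to promote the surjection $\phi$ of Proposition~\ref{prop:surjective homo} to an isomorphism by showing that it carries the monomial basis of $\PFTL(D_n)$ bijectively onto a basis of $\LFD(D_n)$. A $\Z[\delta]$-algebra homomorphism that maps a basis of its domain bijectively onto a basis of its codomain is automatically an isomorphism, and the bijection it then induces between the two bases is exactly the ``natural bijection'' claimed in the ``moreover'' clause, so both assertions come out at once.

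The steps, in order, would be: (1) recall that $\{\b_w : H(w)\text{ is of type II}\}$ is a basis for $\PFTL(D_n)$ and that the images of the type II $D$-admissible diagrams form a basis for $\LFD(D_n)$; (2) compute $\phi$ on basis elements---since $\phi(\b_i)=\d_i$ and $\b_w=\b_{x_1}\cdots\b_{x_k}$ for any reduced expression $s_{x_1}\cdots s_{x_k}$ of $w$, we get $\phi(\b_w)=\d_{x_1}\cdots\d_{x_k}=\d_w$, and when $H(w)$ is of type II this is just the type II diagram $d_w$; (3) invoke the known bijection $b_w\leftrightarrow d_w$ between the monomial basis of $\TL(D_n)$ and the $D$-admissible diagrams, together with Theorem~\ref{index}, which says that among the $d_w$ the type II diagrams are exactly those with $H(w)$ of type II. Steps (2) and (3) together show that $w\mapsto d_w$ restricts to a bijection from $\{w\in\FC(D_n):H(w)\text{ of type II}\}$ onto the set of type II $D$-admissible diagrams, so $\phi$ sends the first basis in (1) bijectively onto the second; hence $\phi$ is an isomorphism, and the ``moreover'' is precisely this bijection read off through $\phi$.

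There is no genuine obstacle: the one substantive input is the matching of the type I/type II dichotomy on heaps with the decorated-loop/no-loop dichotomy on diagrams, and that is exactly Theorem~\ref{index}; the rest is bookkeeping with bases. If one prefers to avoid quoting the $D$-admissible basis of $\LFD(D_n)$ directly, an equivalent route is to observe that $\theta$ sends $b_1b_{\overline 1}$ to the diagram $d_1d_{\overline 1}$, which contains a decorated loop, so $\theta$ carries the defining ideal $\langle b_1b_{\overline 1}\rangle$ of $\PFTL(D_n)$ onto the ideal of $\DTL(D_n)$ generated by the decorated-loop relation defining $\LFD(D_n)$; then $\theta$ descends to an isomorphism $\PFTL(D_n)\to\LFD(D_n)$ that sends $\b_i\mapsto\d_i$ and therefore coincides with $\phi$.
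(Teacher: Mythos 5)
Your proposal is correct and matches the paper's own (very terse) argument: the paper derives the theorem directly from Proposition~\ref{prop:surjective homo} and Theorem~\ref{index}, which is exactly the combination you use --- the surjection $\phi$ carries the monomial basis $\{\b_w : H(w)\text{ of type II}\}$ onto the type II $D$-admissible diagram basis via $\b_w\mapsto\d_w$, with Theorem~\ref{index} matching the two type II conditions, so $\phi$ is an isomorphism and the bijection of bases is the ``moreover'' clause. Your spelled-out bookkeeping (and the alternative route through the ideals) is a faithful expansion of what the paper leaves implicit.
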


\end{section}


\section{Cellular algebras} \label{sec:cellular}


Cellular algebras were introduced by Graham and Lehrer~\cite{Graham1996a}, and are a class of finite dimensional associative algebras defined in terms of a ``cell datum" and three axioms. The axioms allow one to define a set of modules for the algebra known as cell modules, and one of the main strengths of the theory is that it is relatively straightforward to construct and to classify the irreducible modules for a cellular algebra in terms of quotients of the cell modules. 

Let $d$ be a $D$-admissible diagram of type II for $\LFD(D_n)$.  Remove all of the propagating edges from $d$, then take the upper half and call it $\overline{d}$. Invert the lower half of $d$ in a horizontal line and call this $\underline{d}$. We call $\overline{d}$ and $\underline{d}$ \emph{half-diagrams}. Then $d$ can be reconstituted from the ordered pair $(\overline{d},\underline{d})$, written as $d=\overline{d}\circ \underline{d}$, by inserting the appropriate propagating edges. Note that if $d$ has any progagating edges, then whether the leftmost propagating edge is decorated is uniquely determined since we know that the total number of decorations is even. If $h$ is a half-diagram, then we define $\a(\underline{d})$ and $\mathbf{p}(\underline{d})$ in the obvious way.

\begin{example}
Consider the two half-diagrams 
\begin{center}
$h_1=$
\begin{tabular}[c]{l}
\begin{tikzpicture}[scale=1]
\dprimebox{0};
\draw (1,0) arc (-180:0:0.5 and 0.4);
\draw (4,0) arc (-180:0:0.5 and 0.4);
\end{tikzpicture}
\end{tabular}
\end{center}
and
\begin{center}
$h_2=$
\begin{tabular}[c]{l}
\begin{tikzpicture}[scale=1]
\dprimebox{0};
\draw (2,0)  arc (-180:0:0.5 and 0.4) ;
\draw (1,0)  arc (-180:0:1.5 and 0.8) ;
\draw[fill=cyan, draw=white]{(2.5,-0.8) circle (2.8pt)};
\end{tikzpicture},
\end{tabular}
\end{center}
then 
\begin{center}
$h_1\circ h_2=$
\begin{tabular}[c]{l}
\begin{tikzpicture}[scale=1]
\sixbox{0};
\draw (1,0)  arc (-180:0:0.5 and 0.4) ;
\draw (4,0)  arc (-180:0:0.5 and 0.4) ;
\draw (2,-2)  arc (180:0:0.5 and 0.4) ;
\draw (1,-2)  arc (180:0:1.5 and 0.8) ;
\draw (3,0) -- (5,-2)
	node[fill=cyan,  pos=0.5, shape=circle, inner sep=1.8pt, minimum size=2pt]{};
\draw[fill=cyan, draw=white]{(2.5,-1.2) circle (2.8pt)};
\draw (6,0) -- (6,-2); 
\end{tikzpicture}
\end{tabular}
\end{center}
\end{example}

We define $h$ to be a sub-half-diagram of $h'$, as shown in Figure~\ref{subhalf}, if all non-propagating edges of $h'$ are non-propagating edges of $h$. In this case, we write $h\leq h'$.

\begin{figure}[!ht]
\centering
\begin{tabular}[c]{l}
\begin{tikzpicture}[scale=0.7]
\dprimebox{0};
\draw (2,0)  arc (-180:0:0.5 and 0.4) ;
\draw (1,0)  arc (-180:0:1.5 and 0.8) ;
\draw[fill=cyan, draw=white]{(2.5,-0.8) circle (3.6pt)};
\end{tikzpicture}
\end{tabular}
$\leq$
\begin{tabular}[c]{l}
\begin{tikzpicture}[scale=0.7]
\dprimebox{0};
\draw (2,0)  arc (-180:0:0.5 and 0.4) ;
\draw (1,0)  arc (-180:0:1.5 and 0.8) ;
\draw[fill=cyan, draw=white]{(2.5,-0.8) circle (3.6pt)};
\draw (5,0) arc (-180:0:0.5 and 0.4);
\end{tikzpicture}
\end{tabular}
\caption{Example of a subhalf-diagram.}\label{subhalf}
\end{figure}
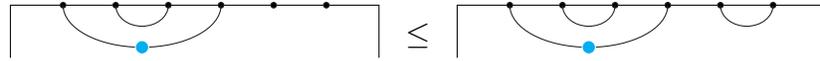  
 
The following definition of cellular algebra comes from~\cite{Graham1996a}.

\begin{definition}
Let $R$ be a commutative ring with identity. A \emph{cellular algebra} over $R$ is an associative unital algebra, $A$, together with a cell datum $(\Lambda,M,C,*)$ where
\begin{enumerate}
\item $\Lambda$ is a poset. For each $\lambda\in\Lambda,M(\lambda)$ is a finite set such that 
\[
C:\coprod_{\lambda\in\Lambda}(M(\lambda)\times M(\lambda))\rightarrow A
\]
is injective with image equal to an $R$-basis of $A$.
\item If $\lambda\in\Lambda$ and $S,T\in M(\lambda)$, we write $C(S,T)=C_{S,T}^{\lambda}\in A$. Then $*$ is an $R$-linear involutory anti-automorphism of $A$ such that $(C_{S,T}^{\lambda})^*=C_{T,S}^{\lambda}$.
\item If $\lambda\in\Lambda$ and $S,T\in M(\lambda)$, then for all $a\in A$ we have
\[
aC_{S,T}^{\lambda}\equiv \sum_{S'\in M(\lambda)}r_a(S',S)C_{S',T}^{\lambda}\mod A(<\lambda),
\]
where $r_a(S',S)\in R$ is independent of $T$ and $A(<\lambda)$ is the $R$-submodule of $A$ generated by the set 
\[
\{C_{S'',T''}^{\mu}:\mu <\lambda,S''\in M(\mu),T''\in M(\mu)\}.
\]
\end{enumerate}
\end{definition}

\begin{example}
Let $S_{n}$ be the symmetric group on $n$ letters. Then the group algebra $\Z S_n$ is cellular over $\Z$. In this case, the poset $\Lambda$ is the set of partitions of $n$, ordered by dominance (meaning that if $\lambda\trianglerighteq\mu$, then $\lambda\leq\mu$). The set $M(\lambda)$ is the set of standard tableaux of shape $\lambda$, namely the ways of writing the numbers $1,2,\ldots,n$ once each into a Young diagram of shape $\lambda$ such that the entries increase along rows and down columns. The element $C^{\lambda}_{S,T}$ is the Kazhdan--Lusztig basis element $C'_w$ such that $w\in S_n$ corresponds via the Robinson--Schensted correspondence to the ordered pair of standard tableaux $(S,T)$. The map $*$ sends $C'_w$ to $C'_{w^{-1}}$. For details on standard tableaux, Young diagrams, and the Robinson--Schensted correspondence we refer the reader to~\cite[Chapter 2]{Sagan2001}.
\end{example}

The Hecke algebra $\H(A_n)$ was shown to be cellular by Graham and Lehrer in~\cite[Example 1.2]{Graham1996a}, and the underlying idea was already implicit in~\cite{Kazhdan1979}. The example of the symmetric group above is obtained simply by specializing $q$ to 1, as was observed by Graham and Lehrer in their treatment of the Brauer algebra~\cite{Graham1996a}. 

We will now construct the cell datum $(\Lambda, M, C, *)$ for $\LFD(D_n)$. Let $\Lambda$ be the set of symbols $\{1,3,5,\ldots,n\}$ when $n$ is odd and $\{0^{+},0^{-},2,4,\ldots,n\}$ when $n$ is even. We put a partial order $<$ on these symbols by declaring that $i<j$ if $\left|i\right|<\left|j\right|$, where $\left|i\right|=i$ if $i$ is a natural number, and $\left|0^{+}\right|=\left|0^{-}\right|=0$. The Hasse diagrams for the posets $\Lambda$ are shown in Figure~\ref{poset}.

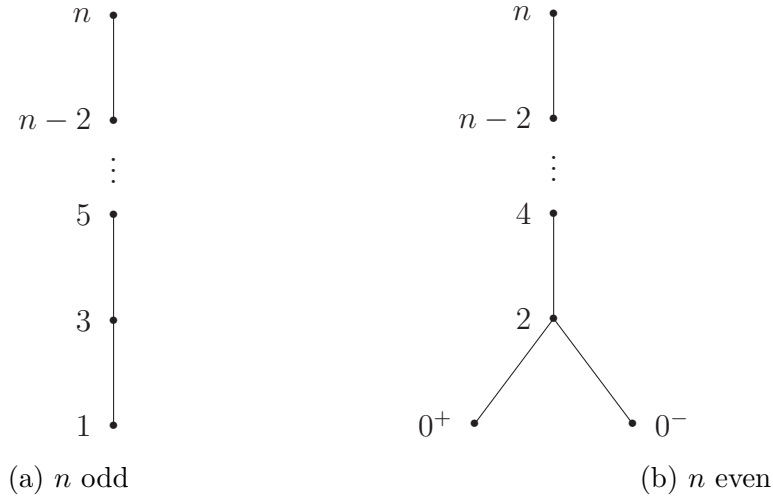
\begin{figure}[!ht]
\centering
\begin{subfigure}[b]{0.48\textwidth}
\centering
\begin{tikzpicture}[scale=0.7]
\node[scale=0.6, label=left:$n$] at (1,5.8) {$\bullet$};
\node[scale=0.6, label=left:$n-2$] at (1,3.8) {$\bullet$};
\node[scale=0.6, label=left:$5$] at (1,2) {$\bullet$};
\node[scale=0.6, label=left:$3$] at (1,0) {$\bullet$};
\node[scale=0.6, label=left:$1$] at (1,-2) {$\bullet$};
\draw (1,5.8)--(1,3.8);
\node at (1,3){$\vdots$};
\draw (1,2)--(1,0)--(1,-2);
\end{tikzpicture}
\caption{$n$ odd}
\end{subfigure}
\quad
\begin{subfigure}[b]{0.48\textwidth}
\begin{tikzpicture}[scale=0.7]
\node[scale=0.6, label=left:$n$] at (1,5.8) {$\bullet$};
\node[scale=0.6, label=left:$n-2$] at (1,3.8) {$\bullet$};
\node[scale=0.6, label=left:$4$] at (1,2) {$\bullet$};
\node[scale=0.6, label=left:$2$] at (1,0) {$\bullet$};
\node[scale=0.6, label=left:$0^{+}$] at (-0.5,-2) {$\bullet$};
\node[scale=0.6, label=right:$0^{-}$] at (2.5,-2) {$\bullet$};
\draw (1,5.8)--(1,3.8);
\node at (1,3){$\vdots$};
\draw (1,2)--(1,0)--(-0.5,-2);
\draw (1,0)--(2.5,-2); 
\end{tikzpicture}
\caption{$n$ even}
\end{subfigure}
\caption{Hasse diagrams for $\Lambda$.}
\label{poset}
\end{figure}

If $\lambda\in\Lambda$, the set $M(\lambda)$ has elements parametrised by the half-diagrams $h$ arising from $D$-admissible diagrams of type II with $\mathbf{p}\left(h\right)=\left|\lambda\right|$. If $\mathbf{p}(d)=0$, then the diagram $d$ has to be reconstructed from two half-diagrams with the same parity. Hence, a half-diagram $h$ with no propagating edges will have the symbol $0^{+}$ if $h$ has an even number of decorations and $0^{-}$ if $h$ has an odd number of decorations.
The anti-automorphism $*$ corresponds to top-bottom inversion of a $D$-admissible diagram of type II.
The map $C$ takes elements $h_1$ and $h_2$ from $M(\lambda)$ and produces the element $C(h_1,h_2)$ which is defined to be $h_1\circ h_2.$

Note that the identity element appears in the image of $C$.

\begin{lemma}\label{axiomthree}
Let $\lambda\in\Lambda$ and $h_1,h_2\in M(\lambda)$. If $d=h_1\circ h_2$, then for all simple diagrams $d_i$, we have
\[
d_id=\epsilon d'
\]
for some $d'$ where $\epsilon\in\{0,1,\delta\}$ and $h_2\leq\underline{d'}$.
\end{lemma}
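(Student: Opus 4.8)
The plan is to argue entirely at the level of concrete diagrams. Fix a concrete representative of $d=\overline{d}\circ\underline{d}$ with $\underline{d}=h_2$. Since $d$ is $D$-admissible of type II (Definition~\ref{def:admissible}), it is loop-free and carries an even total number of decorations, and the lower half of $d$, with its propagating edges deleted, is exactly $h_2$ up to horizontal inversion. Form the concatenation $d_id$ by stacking a fixed concrete $d_i$ on top of $d$ and identifying the middle nodes; the result is a single concrete decorated diagram that contains \emph{at most one} closed loop, this being the standard fact that multiplication by a single Temperley--Lieb type generator creates at most one loop. Now reduce $d_id$ to its irreducible form using the defining relations of $\LFD(D_n)$. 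Because each of those relations is \emph{monomial} — an undecorated loop is erased for a factor $\delta$; a decorated loop annihilates the diagram; an edge carrying two decorations loses them; an edge bounding a decorated loop loses its decorations — the reduction never produces a sum, so $d_id=\epsilon\,d'$ for a single irreducible L-decorated diagram $d'$ and a scalar $\epsilon$.

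The next step is to determine $\epsilon$ and to check that $d'$ is $D$-admissible of type II. Since at most one loop forms, there are three cases: if no loop forms then $\epsilon=1$; if an undecorated loop forms it is erased for $\epsilon=\delta$; if a decorated loop forms then the relation in Figure~\ref{decloop} forces $d_id=0$, so $\epsilon=0$ and the conclusion holds trivially. In the first two cases $d'$ is loop-free (an irreducible element of $\LFD(D_n)$ has no loops at all), it is L-decorated since a product of L-decorated diagrams is L-decorated, it is irreducible by construction, and its total number of decorations is even: the concatenation begins with (even, from $d$) $+$ (even, from $d_i$) decorations, conjoining blocks changes nothing, and — no decorated loop being present in these cases — the only applicable decoration-reducing relation deletes decorations two at a time. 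By Definition~\ref{def:admissible}, $d'$ is therefore $D$-admissible of type II, and $\epsilon\in\{0,1,\delta\}$.

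It remains to verify $h_2\leq\underline{d'}$. The key observation is that stacking $d_i$ \emph{on top of} $d$ leaves the south face of $d$ — its nodes together with all edges lying in its lower half — completely unchanged, and neither the erasure of the (at most one) loop nor the pairwise cancellation of decorations destroys any of those edges, since cancelling decorations keeps the underlying edge and the erased loop is a disjoint closed curve. Hence every non-propagating edge of $\underline{d}=h_2$ remains a non-propagating edge of $\underline{d'}$; new non-propagating edges can arise in the south face only when $d_i$ caps off two propagating edges of $d$, which merely enlarges the set. Thus all non-propagating edges of $h_2$ are non-propagating edges of $\underline{d'}$, that is, $h_2\leq\underline{d'}$ (see Figure~\ref{subhalf}), which completes the argument.

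The main obstacle is the decoration bookkeeping: confirming that at most one loop can form and deciding whether it is decorated, and ruling out a parity problem coming from the relation that strips decorations off edges adjacent to a decorated loop. Both are dispatched by the observation that this last relation is invoked only when a decorated loop is present, in which case $\epsilon=0$ and the evenness of the decoration count of $d'$ is moot; in every surviving case only the parity-preserving pairwise cancellation is in play. The remainder is routine diagram chasing, analogous to the computations in the proof of Proposition~\ref{rem:DII relations hold}.
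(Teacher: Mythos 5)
Your proof is correct and follows essentially the same route as the paper: concatenate $d_i$ on top of $d$, note that the south face (hence every non-propagating edge of $h_2$) survives into $\underline{d'}$, and read off $\epsilon\in\{0,1,\delta\}$ from whether the single possible loop forms and whether it is decorated. Your case analysis phrased in terms of the resulting loop is in fact slightly more careful than the paper's formula in terms of the edge of $h_1$ (which, as written, assigns $\epsilon=\delta$ to $d_{\overline{1}}d$ when $h_1$ has an \emph{undecorated} edge from node $1$ to node $2$, even though that concatenation produces a decorated loop and hence $\epsilon=0$), and your additional verification that $d'$ is a type II $D$-admissible diagram is a harmless bonus the paper omits.
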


\begin{proof}
Since the multiplication of diagrams $d_id$ preserves the non-propagating edges in the north face of $d_i$ and the south face of $d$, $\underline{d'}$ must have at least the same non-propagating edges as $h_2$. So,  $h_2\leq\underline{d'}$. 
If $h_1$ has a non-propagating edge from node $i$ to node $i+1$ decorated with $x\in\{\emptyset,\bcirc\}$ (where $\emptyset$ denotes that the edge is undecorated), then
\[
\epsilon=\begin{cases} 0, &\mbox{ if }x=\bcirc \mbox{ and }i\neq\overline{1}\\ \delta, &\mbox{ otherwise.}\end{cases}
\]
If $h_1$ does not have a non-propagating edge from node $i$ to $i+1$, then $\epsilon=1$.
\end{proof}

\begin{remark}
The multiple $\epsilon$ does not depend on $h_2$ and $\mathbf{p}(d')\leq\mathbf{p}(d)$.
\end{remark}

Let $\LFD(D_n)\left(<\lambda\right)$ be the submodule generated by diagrams with $\mathbf{p}$-values less than $|\lambda|$. Then the following lemma holds.

\begin{lemma}\label{lessnonprop}
Let $d,d'\in\LFD(D_n)$. If $\mathbf{p}(d')<\mathbf{p}(d)$, then $d'\in \LFD(D_n)\left(<\lambda\right)$, where $|\lambda|=\mathbf{p}(d)$. \qed
\end{lemma}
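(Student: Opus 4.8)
The plan is to unwind the definition of $\LFD(D_n)(<\lambda)$ just given. First I would observe that, by that definition, $\LFD(D_n)(<\lambda)$ is the $\Z[\delta]$-submodule of $\LFD(D_n)$ spanned by the type II $D$-admissible basis diagrams whose $\mathbf{p}$-value is strictly less than $|\lambda|$. Since the type II $D$-admissible diagrams form a basis for $\LFD(D_n)$ and $\mathbf{p}$ is a function on these basis diagrams, it suffices to treat the case in which $d$ and $d'$ are basis diagrams. In that case the hypothesis reads $\mathbf{p}(d') < \mathbf{p}(d) = |\lambda|$, which is precisely the condition that $d'$ be one of the basis diagrams in the spanning set of $\LFD(D_n)(<\lambda)$; hence $d' \in \LFD(D_n)(<\lambda)$, and the argument is finished.

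Next, I would add one sentence reconciling this concrete description with the spanning set $\{C^{\mu}_{S'',T''} : \mu < \lambda,\ S'',T'' \in M(\mu)\}$ from the abstract definition of $A(<\lambda)$, since that is the guise in which $\LFD(D_n)(<\lambda)$ is used when checking the cellular axioms. Here the key point is that, on $\Lambda$, one has $\mu < \lambda$ if and only if $|\mu| < |\lambda|$ (valid whenever $|\lambda| \geq 1$, as $0^{+}$ and $0^{-}$ are the minimal elements). So, given a basis diagram $d'$ with $p := \mathbf{p}(d') < |\lambda|$, write $d' = \overline{d'} \circ \underline{d'}$ and let $\mu$ be the symbol of $\Lambda$ with $|\mu| = p$ when $p \geq 1$, or $\mu \in \{0^{+}, 0^{-}\}$ chosen by the parity of the number of decorations on $d'$ when $p = 0$. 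Then $\overline{d'},\underline{d'} \in M(\mu)$, $d' = C^{\mu}_{\overline{d'},\underline{d'}}$, and $\mu < \lambda$, so $d'$ lies in the spanning set.

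Finally, I would dispose of the degenerate case $|\lambda| = \mathbf{p}(d) = 0$, which is vacuous since no diagram has negative $\mathbf{p}$-value. I do not expect any genuine obstacle: the only content beyond a direct appeal to the definition is the routine parity bookkeeping for half-diagrams with no propagating edges, which is well-defined precisely because a type II diagram carries an even total number of decorations and hence its two half-diagrams have decoration counts of the same parity.
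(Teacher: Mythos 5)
Your proposal is correct and takes essentially the same route as the paper, which in fact records no proof at all: Lemma~\ref{lessnonprop} is asserted to follow immediately from the definition of $\LFD(D_n)(<\lambda)$ as the submodule generated by diagrams of smaller $\mathbf{p}$-value. Your additional reconciliation with the spanning set $\{C^{\mu}_{S'',T''} : \mu<\lambda\}$, the observation that $\mu<\lambda$ iff $|\mu|<|\lambda|$ for $|\lambda|\ge 1$, and the parity bookkeeping for $0^{+},0^{-}$ are all accurate and merely make explicit what the paper leaves implicit.
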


\begin{theorem}\label{loopfreecellular}
The algebra $\LFD(D_n)$ over the ring $\Z[\delta]$ has a cell datum $(\Lambda,M,C,*)$, where the sets are given as above.
\end{theorem}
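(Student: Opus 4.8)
The plan is to verify, for the datum $(\Lambda,M,C,*)$ constructed above, the three axioms in the definition of a cellular algebra, in increasing order of difficulty. For the first axiom, that $C$ is injective with image an $R$-basis of $\LFD(D_n)$, I would use the fact (already established) that the images of the type~II $D$-admissible diagrams form a basis; it then suffices to show that $(h_1,h_2)\mapsto h_1\circ h_2$ is a bijection from $\coprod_{\lambda\in\Lambda}M(\lambda)\times M(\lambda)$ onto that basis. Surjectivity and injectivity both follow from the fact that a type~II $D$-admissible diagram $d$ is reconstituted from the ordered pair $(\overline d,\underline d)$: the non-propagating edges come from the two half-diagrams, the number of propagating edges equals $\mathbf p(\overline d)=\mathbf p(\underline d)$, and whether the leftmost propagating edge is decorated is forced by the requirement that the total number of decorations be even. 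The $\Lambda$-grading is respected because two half-diagrams are composable only when they have equal $\mathbf p$-value, and in the $\mathbf p=0$ case the labels $0^{+}$ and $0^{-}$ record the parity of the number of decorations, so a pair $(h_1,h_2)$ in $M(\lambda)\times M(\lambda)$ produces a diagram with even decoration count and $\mathbf p$-value $|\lambda|$, while mixed parities are disallowed; every type~II $D$-admissible diagram is then hit exactly once.

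Next I would dispatch the anti-automorphism axiom. The map $*$ is top--bottom inversion extended $R$-linearly; it is well defined on $\LFD(D_n)$ because a vertical flip preserves being loop-free and preserves the total number of decorations, hence sends type~II diagrams to type~II diagrams, and it is compatible with the defining relations ($\text{loop}=\delta$, two decorations on an edge cancel, decorated loop $=0$), each of which is symmetric under a vertical flip. A flip reverses the order of a vertical stack, so $(d_1d_2)^{*}=d_2^{*}d_1^{*}$ and $*$ is an anti-automorphism; it is plainly involutory. Inverting $h_1\circ h_2$ interchanges the roles of the two half-diagrams, giving $(C_{h_1,h_2}^{\lambda})^{*}=C_{h_2,h_1}^{\lambda}$.

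The bulk of the argument is the multiplicative axiom. Since the $d_i$ generate $\LFD(D_n)$, it is enough to establish, for each simple diagram $d_i$ and each $d=h_1\circ h_2$ with $h_1,h_2\in M(\lambda)$, a congruence
\[
d_i\,(h_1\circ h_2)\equiv\sum_{h_1'}r_{d_i}(h_1',h_1)\,(h_1'\circ h_2)\pmod{\LFD(D_n)(<\lambda)}
\]
with $r_{d_i}(h_1',h_1)$ independent of $h_2$, and then to pass to arbitrary $a$ by induction on word length in the generators. This induction is legitimate because $\LFD(D_n)(<\lambda)$ is a left ideal, which follows from Lemmas~\ref{axiomthree} and~\ref{lessnonprop}. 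For the base case I would invoke Lemma~\ref{axiomthree} to write $d_i(h_1\circ h_2)=\epsilon\,d'$ with $\epsilon\in\{0,1,\delta\}$, $h_2\le\underline{d'}$, and $\mathbf p(d')\le\mathbf p(d)=|\lambda|$. If $\mathbf p(d')<|\lambda|$ then $\epsilon d'\in\LFD(D_n)(<\lambda)$ by Lemma~\ref{lessnonprop}, so the right-hand side is the empty sum. If $\mathbf p(d')=|\lambda|$, then left multiplication by $d_i$ destroys no propagating edge, the south face of $d$ is unaltered, so $\underline{d'}=h_2$, and both $\epsilon$ (by the remark following Lemma~\ref{axiomthree}) and the upper half-diagram $\overline{d'}$ depend only on $d_i$ and $h_1$; hence the right-hand side has the single term $\epsilon\,(\overline{d'}\circ h_2)$ with $r_{d_i}(\overline{d'},h_1)=\epsilon$ and all other coefficients zero.

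The main obstacle I anticipate is precisely this last point: rigorously upgrading $h_2\le\underline{d'}$ to $\underline{d'}=h_2$ when the $\mathbf p$-value is preserved. One must track not only the matching of non-propagating edges but also the decorations and the left-exposed normal form through the concatenation and the subsequent reduction by relations, paying particular attention to the cases where one of the nodes $1,1'$ is involved (so $d_i=d_{\overline1}$ may contribute a decoration) and where $\mathbf p=0$ so the relevant poset element is $0^{+}$ or $0^{-}$. Once that equality is secured, the $r_{d_i}$ are manifestly independent of $h_2$ and the three axioms are complete.
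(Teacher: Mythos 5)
Your proposal is correct and follows essentially the same route as the paper, which also reduces everything to axiom 3, notes that it suffices to multiply by a single simple diagram since these generate $\LFD(D_n)$, and then cites Lemma~\ref{axiomthree} and Lemma~\ref{lessnonprop}. You actually supply more detail than the paper's two-sentence argument, and the obstacle you flag at the end (upgrading $h_2\leq\underline{d'}$ to $\underline{d'}=h_2$ and the independence of $\epsilon$ from $h_2$) is exactly the content the paper leaves implicit in Lemma~\ref{axiomthree} and its following remark.
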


\begin{proof}
The proof is largely straightforward. The only nontrivial part is the verification of axiom 3. Since $\LFD(D_n)$ is generated by the simple diagrams, it is enough to multiply $d$ by a simple diagram and hence axiom 3 follows quickly from Lemma~\ref{axiomthree} and Lemma~\ref{lessnonprop}.
\end{proof}

Since $\LFD(D_n)\cong\PFTL(D_n)$ as $\Z[\delta]$-algebras, the following corollary is immediate.

\begin{corollary}\label{pairfreecellular}
The algebra $\PFTL(D_n)$ over the ring $\Z[\delta]$ is cellular. \qed
\end{corollary}


\section{Future work}


We have shown that $\PFTL(D_n)$ is cellular by explicitly constructing a cell datum for $\LFD(D_n)$. However, it remains to describe the corresponding cell datum for $\PFTL(D_n)$, which may or may not be enlightening.  As an application, we could utilize the structure of the algebra to quickly compute $\mu$-values (see Section~\ref{hecke}) for pairs of elements that index the basis of $\PFTL(D_n)$. 

We have shown that a quotient of $\TL(D_n)$ is cellular, but we are uncertain if $\TL(D_n)$ itself is cellular. This is likely known, but we are unable to find a reference.

Lastly, it is known that there is a connection between $\PFTL(D_n)$ and the so-called blob algebra~\cite{Martin1994}. Future work could include making this connection more explicit.


\bibliographystyle{plain}
\bibliography{References}


\end{document}